\newcommand{\mbb}{\mathbb}
\newcommand{\ones}{{\bf{1}}}
\newcommand{\R}{\mathbb{R}}
\newcommand{\N}{\mathbb{N}}
\newcommand{\sgn}{\mbox{\textup{sgn}}}
\newcommand{\rank}{\mbox{\textup{rank\,}}}
\newcommand{\idx}{\mbox{\textup{idx}}}
\newcommand{\carr}{\mbox{\textup{carr}}}
\newcommand{\extcarr}{\mbox{\textup{ext carr\,}}}
\newcommand{\range}{\mbox{\textup{range\,}}}
\newcommand{\actionikone}{y_{i}^{k+1}}
\newcommand{\actionione}{y_{i}^{1}}
\newcommand{\actionitwo}{y_{i}^{2}}
\newcommand{\actionigammai}{y_{i}^{\gamma_i}}
\newcommand{\cl}{\mbox{\textup{cl\,}}}
\newcommand{\diag}{\textup{diag}\,}
\newcommand{\IR}{\textup{IR}}
\newcommand{\rOmega}{X^*}
\newcommand{\vzero}{{}\bf 0}
 \def\vzero{{\bf 0}}
 \def\vb{{\bf b}}
   \def\vp{{\bf p}}
 \def\vr{{\bf r}}
\def\vA{{\bf A}} \def\vB{{\bf B}} \def\vC{{\bf C}} \def\vD{{\bf D}}
\def\vE{{\bf E}} \def\vF{{\bf F}}  \def\vH{{\bf H}}
\def\vI{{\bf I}} \def\vJ{{\bf J}}  \def\vL{{\bf L}}
\def\vM{{\bf M}}   \def\vP{{\bf P}}
 \def\vR{{\bf R}}
  \def\calI{\mathcal{I}}
  \def\calL{\mathcal{L}}
\def\calP{\mathcal{P}}  
\def\calS{\mathcal{S}}  \def\calU{\mathcal{U}}
 \def\calW{\mathcal{W}} 
\newtheorem{theorem}{Theorem}
\newtheorem{proposition}[theorem]{Proposition}
\newtheorem{example}[theorem]{Example}
\newtheorem{remark}[theorem]{Remark}
\newtheorem{mydef}[theorem]{Definition}
\newtheorem{definition}[theorem]{Definition}
\newtheorem{lemma}[theorem]{Lemma}
\newtheorem{corollary}[theorem]{Corollary}
\journal{Games and Economic Behavior}
\begin{document}

\begin{frontmatter}

%% Title, authors and addresses

%% use the tnoteref command within \title for footnotes;
%% use the tnotetext command for theassociated footnote;
%% use the fnref command within \author or \address for footnotes;
%% use the fntext command for theassociated footnote;
%% use the corref command within \author for corresponding author footnotes;
%% use the cortext command for theassociated footnote;
%% use the ead command for the email address,
%% and the form \ead[url] for the home page:
%% \title{Title\tnoteref{label1}}
%% \tnotetext[label1]{}
%% \author{Name\corref{cor1}\fnref{label2}}
%% \ead{email address}
%% \ead[url]{home page}
%% \fntext[label2]{}
%% \cortext[cor1]{}
%% \address{Address\fnref{label3}}
%% \fntext[label3]{}

\title{Regular Potential Games}

% use optional labels to link authors explicitly to addresses:
 \author[label3]{Brian Swenson}
 \author[label2]{Ryan Murray}
 \author[label1]{Soummya Kar}
 \address[label3]{Department of Electrical Engineering, Princeton University, Princeton, NJ (bswenson@princeton.edu)}
 \address[label2]{Department of Mathematics, North Carolina State University, Raleigh, NC (rwmurray@ncsu.edu)}
 \address[label1]{Department of Electrical and Computer Engineering, Carnegie Mellon University, Pittsburgh, PA (soummyak@andrew.cmu.edu)}

%\author{Brian Swenson, Ryan Murray, and Soummya Kar}

%\address{}

\renewcommand{\thefootnote}{\fnsymbol{footnote}}

%\footnotetext[2]{Department of Electrical and Computer Engineering,
%Carnegie Mellon University, Pittsburgh, PA, USA (brianswe@ece.cmu.edu, soummyak@ece.cmu.edu).}
%\footnotetext[3]{Department of Mathematics, Pennsylvania State University, State College, PA, USA (rwm22@psu.edu).}

\renewcommand{\thefootnote}{\arabic{footnote}}

\begin{abstract}
A fundamental problem with the Nash equilibrium concept is the existence of certain ``structurally deficient'' equilibria that (i) lack fundamental robustness properties, and (ii) are difficult to analyze.
The notion of a ``regular'' Nash equilibrium was introduced by Harsanyi.
Such equilibria are isolated, highly robust, and relatively simple to analyze. A game is said to be regular if all equilibria in the game are regular.
In this paper it is shown that almost all potential games are regular. That is, except for a closed subset with Lebesgue measure zero, all potential games are regular.
As an immediate consequence of this, the paper also proves an oddness result for potential games:
in almost all potential games, the number of Nash equilibrium strategies is finite and odd. Specialized results are given for weighted potential games, exact potential games, and games with identical payoffs. Applications of the results to
game-theoretic learning are discussed.
\end{abstract}

\begin{keyword}
Game theory; Potential games; Generic games; Regular equilibria; Multi-agent systems
%% keywords here, in the form: keyword \sep keyword

%% PACS codes here, in the form: \PACS code \sep code

%% MSC codes here, in the form: \MSC code \sep code
%% or \MSC[2008] code \sep code (2000 is the default)

\end{keyword}

\end{frontmatter}

%% \linenumbers

\section{Introduction} \label{sec_intro}

While the notion of Nash equilibrium (NE) is a universally accepted solution concept for games, several shortcomings have been noted over the years.
A principal criticism (in addition to non-uniqueness) is
that some Nash equilibrium strategies may be undesirable or unreasonable due to a lack of basic robustness properties.
As a consequence, many equilibrium refinement concepts have been proposed \cite{selten1975reexamination,myerson1978refinements,harsanyi1973oddness,van1991stability,wen1962essential,kojima1985strongly,okada1981stability},
each attempting to single out subsets of Nash equilibrium strategies that satisfy some desirable criteria.

One of the most stringent refinement concepts, originally proposed by Harsanyi \cite{harsanyi1973oddness}, is that a NE strategy be ``regular.''
%The notions of regular and quasi-strong equilibria were originally proposed by Harsanyi \cite{harsanyi1973oddness}.
In the words of van Damme \cite{van1991stability}, ``regular Nash equilibria possess all the robustness properties that one can reasonably expect equilibria to possess.'' Such equilibria are quasi-strict \cite{harsanyi1973oddness,van1991stability},  perfect \cite{selten1975reexamination}, proper \cite{myerson1978refinements}, strongly stable \cite{kojima1985strongly}, essential \cite{wen1962essential}, and isolated \cite{van1991stability}.\footnote{See \cite{van1991stability} for an in-depth discussion of each of these concepts and their interrelationships.}

If all equilibria of a game are regular, then the number of NE strategies in the game has been shown to be finite and, curiously, odd \cite{harsanyi1973oddness,wilson1971computing}.
Regular equilibria have also been studied in the context of games of incomplete information, where, as part of Harsanyi's celebrated purification theorem \cite{harsanyi1973games,govindan2003short,morris2008purification}, they have been shown to be approachable.

A game is said to be regular if all equilibria in the game are regular. Harsanyi \cite{harsanyi1973oddness} showed that almost all\footnote{Following Harsanyi \cite{harsanyi1973oddness}, when we say almost all games satisfy some condition we mean the set of games where the condition fails to hold is a closed set with Lebesgue measure zero. See Section~\ref{sec_almost_all} for more details.} games are regular, and hence, in almost all games, all equilibria possess all the robustness properties we might reasonably hope for.

While this result is a powerful when targeted at general $N$-player games, there are many important classes of games that have Lebesgue measure zero within the space of all games \cite{brandt2008hardness}. Harsanyi's result tells us nothing
%is not sufficiently fine to tell us anything
about equilibrium properties within such special classes of games.
This is the case, for example, in the important class of multi-agent games known as \emph{potential games} \cite{Mond96}.

A game is said to be a \emph{potential game} if there exists some underlying function (generally referred to as the \emph{potential function}) that all players implicitly seek to optimize.
Potential games have many applications in economics and engineering \cite{marden2009cooperative,Mond96}, and are particularly useful in the study of multi-agent systems, e.g., \cite{scutari2006potential,xu2013decision,zhu2013distributed,ding2012collaborative,
nie2006adaptive,chu2012cooperative,srivastava2005using,Lambert01,
garcia2000fictitious,marden-connections,li2013designing}.

There are several types of potential games---in order of decreasing generality, these include weighted potential games, exact potential games, and games with identical payoffs \cite{Mond96,Mond01}.
%\footnote{More general sets of potential games include ordinal potential games \cite{Mond96} and best response potential games \cite{Voor00}. In this paper we will focus on weighted potential games and subsets thereof.}
Letting WPG, EPG, and GIP denote the set of each of these types of potential games respectively, and letting G denote the set of all games, we have the following relationship:\footnote{More precisely, any finite game of a fixed size (i.e., with a fixed number of players and actions) is uniquely represented as a vector in Euclidean space denoting the payoff received by each player for each pure strategy. The set $G$ of all possible games of a given size is equal to $\R^m$ for some appropriate $m\in\N$ (see, e.g., \cite{fudenberg1998theory} Section 12.1), and each class of potential games is a lower-dimensional subset of $\R^m$. See Section~\ref{sec_almost_all} for more details.}
%among these sets of games
$$
\mbox{GIP} \subset \mbox{EPG} \subset \mbox{WPG} \subset \mbox{G},
$$
where each subset is a low-dimensional (measure-zero) subset within any of its supersets.
Harsanyi's regularity result provides no information on the abundance (or dearth) of regular equilibria within these subclasses of games. Hence, when restricting attention to potential games, as is often done in the study of multi-agent systems, we are deprived of any generic results on the regularity, robustness, or finiteness of the equilibrium set.

We say that a property holds for almost all games in a given class if the subset of games in the class where the property fails to holds is a closed set with Lebesgue measure zero (with the dimension of the Lebesgue measure corresponding to the dimension of the given class of games---see Section~\ref{sec_almost_all} for more details).
%We say a property holds for almost all games of a given class if the set of games where the property fails to holds has (appropriately-dimensioned) Lebesgue measure zero.

The main result of this paper is the following theorem.
\begin{theorem} \label{thrm_regular_eqilibria} %\label{thrm_regular_eqilibria}
$~$\\
(i) Almost all weighted potential games are regular.\\
(ii) Almost all exact potential games are regular.\\
(iii) Almost all games with identical payoffs are regular.
\end{theorem}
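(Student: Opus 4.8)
The overall strategy is a transversality / parametric Sard argument, adapted to the fact that each class of potential games lives in a linear subspace of the full game space $\R^m$. Recall that an equilibrium $\sigma$ is regular precisely when a certain Jacobian, built from the payoff gradients and the active-support constraints at $\sigma$, is nonsingular; equivalently, $\sigma$ is a regular zero of the excess-payoff map associated with its support configuration. The plan is to fix a support profile $S = (S_1,\dots,S_N)$ (a choice of carrier for each player), write down the finite system of polynomial equations and inequalities whose solutions are the equilibria with that support, and view the left-hand side as a map $F_S \colon \Delta_S \times \calP \to \R^{d}$, where $\calP$ is the relevant parameter space (WPG, EPG, or GIP — each a Euclidean space of the appropriate dimension) and $\Delta_S$ is the product of relative interiors of the support simplices. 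The key is to show $F_S$ is a submersion in the parameter directions alone; then by the parametric transversality theorem, for almost every parameter value the slice $F_S(\cdot,p)$ has $0$ as a regular value, which is exactly regularity of every equilibrium with support $S$. Taking a finite union over all support profiles $S$, and over the discrete data distinguishing which inequality constraints are tight, yields that almost all games in the class are regular; since the bad set is a finite union of proper real-algebraic subsets it is closed with measure zero, matching the notion of ``almost all'' from Section \ref{sec_almost_all}.

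For part (iii), games with identical payoffs, the argument is cleanest: here a game is determined by a single function $u \colon \prod_i A_i \to \R$, and the equilibrium conditions are stationarity conditions for the multilinear extension of $u$ subject to simplex constraints. I would perturb $u$ on the pure-strategy entries and check that the derivative of $F_S$ with respect to those entries is surjective at any candidate zero — this reduces to a genericity statement about the multilinear form and its restricted Hessian being nondegenerate, which one gets by exhibiting, for each coordinate one needs to move, a payoff entry whose perturbation moves it independently. Part (ii), exact potential games, follows by the standard observation that an exact potential game decomposes as a potential part plus a ``dummy'' part (payoffs depending only on the opponents' actions); regularity is unaffected by adding dummy terms, so it suffices to perturb within the identical-payoff part, reducing (ii) to (iii). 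Part (i), weighted potential games, is handled by writing the weighted potential condition $\nabla_{y_i} u_i = w_i \nabla_{y_i} \Phi$ and treating both $\Phi$ and the player-specific rescalings/dummy components as free parameters; again one perturbs $\Phi$ and the dummy parts and invokes (iii)-type nondegeneracy, the weights $w_i$ being harmless fixed positive scalars.

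The main obstacle I anticipate is the submersion step: one must verify that perturbations constrained to lie inside the potential-games subspace are still rich enough to achieve transversality — a generic perturbation of the full game need not preserve the potential structure, so the naive application of Harsanyi's argument does not transfer. Concretely, one needs that the projection of the tangent space of the constraint manifold onto the relevant coordinates, intersected with the image of $D_p F_S$ restricted to potential-preserving directions, is all of $\R^d$. I expect this to be established by an explicit choice of perturbing directions: modifying the potential function $\Phi$ at a single pure profile perturbs the payoff gradients of all players simultaneously but in a controlled, rank-one-like pattern, and one shows these patterns span the needed space by a linear-algebra computation exploiting the multilinear structure (distinct pure profiles give linearly independent gradient perturbations on the relevant supports). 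A secondary technical point is bookkeeping the inequality constraints — ensuring that the ``quasi-strict'' part of regularity (strict inequality of unused-action payoffs) also holds generically — which is handled by noting that the locus where some unused-action payoff exactly ties the equilibrium payoff is itself a proper algebraic subset, hence negligible, and can be folded into the finite union of bad sets.
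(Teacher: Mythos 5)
Your overall architecture matches the paper's: reduce (i) and (ii) to (iii) by observing that regularity depends only on the potential function (this is Lemma \ref{lemma_ipg_to_weighted} together with the product/Fubini decomposition in Section \ref{sec_exact_and_weighted_games}), and prove (iii) by a support-by-support Sard argument. Your parametric-transversality packaging is equivalent to the paper's ``local inversion plus covering'' use of Sard (Remark \ref{remark_harsanyi_comparison} explains why a single global Harsanyi-style map is unavailable here; the transversality theorem hides the same covering inside its own proof). However, the two steps you treat as routine are exactly where the work lies, and as written they are gaps.

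First, the submersion step. The equilibrium system is linear in the payoff vector $u$, i.e.\ $F_S(x,u)=\vA(x)u$, so surjectivity of the parameter derivative is precisely the claim that $\vA(x)$ has full row rank (Proposition \ref{proposition_A_full_rank}). Your proposed justification --- ``distinct pure profiles give linearly independent gradient perturbations'' --- does not survive the degenerate cases: the column of $\vA(x)$ indexed by a pure profile $\tau$ carries the factor $\prod_{j\neq i}q_j^\tau(x_j)$, which vanishes for most $\tau$ once some player's strategy puts zero weight on part of the support configuration under consideration, and one must show the surviving columns still span $\R^\gamma$. Moreover the rank statement is needed not only at strategies with carrier exactly $C$ but for all $x$ with $\carr(x)\subseteq C$; the paper proves it via the $L$-matrix (sign-solvability) characterization in Lemma \ref{lemma_A_L_matrix}, a genuinely nontrivial case analysis and the main technical contribution. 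Second, the first-order (quasi-strict) part cannot be ``folded into the finite union of bad sets'' by calling it a proper algebraic subset: the tie condition defines a set in the \emph{product} of strategy space and game space, and what must be shown null is its projection onto game space. The paper controls that projection by exhibiting the bad games as the Lipschitz image of a set of Hausdorff dimension at most $K-1$ (Proposition \ref{lemma_first_order_degeneracy}), and this argument again leans on the full-row-rank property of $\vA(x)$ at strategies whose carrier is a strict subset of their extended carrier. So your plan follows the paper's route, but both of its load-bearing steps are asserted rather than proved.
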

We note that this result implies that for almost all games in each of these classes, all equilibria are quasi-strict, perfect, proper, strongly stable, essential, and isolated.
Using Harsanyi's oddness theorem (see \cite{harsanyi1973oddness}, Theorem 1), we see that in any regular game, the number of NE strategies is finite and odd. Hence, the following result is an immediate consequence of Theorem~\ref{thrm_regular_eqilibria}.
\begin{theorem}
In almost all weighted potential games, almost all exact potential games, and almost all games with identical payoffs, the number of NE strategies is finite and odd.
\end{theorem}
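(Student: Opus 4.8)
The plan is to deduce the oddness theorem directly from Theorem~\ref{thrm_regular_eqilibria} together with Harsanyi's oddness result, so almost all of the work has already been done. The key observation is that ``regular'' is a property of a game, and Harsanyi \cite{harsanyi1973oddness} has shown that any regular game has a finite, odd number of Nash equilibrium strategies. Thus, fixing a class $\mathcal{C} \in \{\mbox{WPG}, \mbox{EPG}, \mbox{GIP}\}$, it suffices to observe that the set of games in $\mathcal{C}$ that are \emph{not} regular is, by Theorem~\ref{thrm_regular_eqilibria}, a closed set of Lebesgue measure zero (with respect to the appropriate dimension for $\mathcal{C}$, as discussed in Section~\ref{sec_almost_all}). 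On the complement --- a set of full measure within $\mathcal{C}$ --- every game is regular, and hence has a finite, odd number of Nash equilibrium strategies. This is precisely the assertion of the theorem.

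First I would recall the precise statement of Harsanyi's oddness theorem (\cite{harsanyi1973oddness}, Theorem~1), namely that in any regular game the set of Nash equilibrium strategy profiles is finite and of odd cardinality; this is applicable because regularity is defined identically whether or not the game lies in a lower-dimensional subclass --- it is an intrinsic property of the payoff data, not of the ambient space. Next I would invoke Theorem~\ref{thrm_regular_eqilibria}(i)--(iii), which gives, for each of the three classes, that the non-regular games form a closed subset of Lebesgue measure zero within that class. Then I would simply combine these two facts: for each class $\mathcal{C}$, let $Z_{\mathcal{C}} \subset \mathcal{C}$ be the closed measure-zero set of non-regular games; for any game $\Gamma \in \mathcal{C} \setminus Z_{\mathcal{C}}$, $\Gamma$ is regular, so by Harsanyi's theorem its number of Nash equilibrium strategies is finite and odd. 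Since $Z_{\mathcal{C}}$ is closed with measure zero, this establishes the claim for almost all games in $\mathcal{C}$.

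There is essentially no genuine obstacle here --- the content is entirely contained in Theorem~\ref{thrm_regular_eqilibria} and in the cited result of Harsanyi. The only point requiring a small amount of care is to make sure the notion of ``almost all'' is interpreted consistently: as noted in the excerpt and elaborated in Section~\ref{sec_almost_all}, when we speak of almost all games in a class $\mathcal{C}$ we mean that the exceptional set is closed and has Lebesgue measure zero with respect to the Lebesgue measure of dimension equal to $\dim \mathcal{C}$. Since $Z_{\mathcal{C}}$ inherits both closedness and measure-zero-ness from Theorem~\ref{thrm_regular_eqilibria} in exactly this sense, the conclusion is immediate. The proof is therefore a one-line deduction, and I would present it as such, with a pointer to \cite{harsanyi1973oddness,wilson1971computing} for the oddness count itself.
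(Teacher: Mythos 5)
Your proposal is correct and matches the paper's own argument exactly: the paper derives this theorem as an immediate consequence of Theorem~\ref{thrm_regular_eqilibria} combined with Harsanyi's oddness theorem (\cite{harsanyi1973oddness}, Theorem~1), which states that any regular game has a finite and odd number of NE strategies. Your added care about interpreting ``almost all'' consistently within each class is a reasonable point but does not change the substance; the deduction is the same one-liner the paper gives.
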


Regularity may be seen as serving two purposes. First, it ensures that the equilibrium set possesses the desirable structural properties noted above (e.g., equilibria are isolated, robust, and finite in number). Second, it simplifies the analysis of the game near equilibrium points---the important features of players' utility functions near an equilibrium can be understood by looking only at first- and second-order terms in the associated Taylor series expansion.
In this sense, the role of regular equilibria in games is analogous to the role that non-degenerate critical points play in the study of real-valued functions.\footnote{A critical point $x^*$ of a function $f:\R^n\to \R$ is said to be non-degenerate if the Hessian of $f$ at $x^*$ is non-singular. When a critical point is non-degenerate, one can understand the important local properties of $f$ using only the gradient and Hessian of $f$. If a critical point is \emph{degenerate} then heavy algebraic machinery may be required to understand the local properties of $f$.
With regard to games, if $x^*$ is an interior equilibrium point of a potential game with potential function $U$, then $x^*$ is regular if and only if $x^*$ is a non-degenerate critical point of $f$. For non-interior equilibrium points the story is more involved, but the main idea is the same.}
This amenable analytic structure can greatly facilitate the study of (for example) game-theoretic learning processes \cite{swens2017BRdynamics,cohen2017learning}
%\cite{roth2013stochastic}
or approachability in games with incomplete information \cite{govindan2003short}.

As an application of these results to learning theory, in the paper \cite{swens2017BRdynamics} we consider the problem of studying continuous best-response dynamics (BR dynamics) \cite{gilboa1991social,hofbauer1995stability,hofbauer2003evolutionary} in potential games. BR dynamics are fundamental to learning theory---they model various forms of learning in games and underlie many popular game-theoretic learning algorithms including the canonical fictitious play (FP) algorithm \cite{benaim2005stochastic}. While it is known that BR dynamics converge to the set of NE in potential games, the result is less than satisfactory. BR dynamics can converge to mixed-strategy (saddle-point) Nash equilibria and solutions of BR dynamics may be non-unique.
Furthermore, little is understood about transient properties such as the rate of convergence of BR dynamics in potential games. (In fact, due to the non-uniqueness of solutions in potential games, it has been shown that it is impossible to establish convergence rate estimates for BR dynamics that hold at all points \cite{harris1998rate}.)

In \cite{swens2017BRdynamics} we study how regular potential games can be used to address these issues. In particular, it is shown that in any regular potential game, BR dynamics converge generically to pure NE, solutions of BR dynamics are generically unique, and the rate of convergence of BR dynamics is generically exponential. Combined with the results of the present paper, this allows us to show that BR dynamics are ``well behaved'' in almost all potential games.

Furthermore, in \cite{Mond01} Monderer and Shapley study the convergence of the closely related FP algorithm in potential games and show convergence to the set of NE. In particular, they show that it is possible for FP to converge to completely mixed NE in potential games, which can be highly problematic for a number of reasons \cite{jordan1993,Fud92,swens2017BRdynamics}.
However, they conjecture that such behavior is exceptional;
that is, they conjecture that in generic two-player potential games FP always converges to pure NE (see \cite{Mond01}, Section 2).\footnote{Since their reasoning relies on the improvement principle \cite{monderer1997fictitious}, which does not hold in games with more than two players, they limit their conjecture to two-player games.}
Regular potential games are well suited to studying this conjecture.\footnote{Harsanyi's result (that regular games are generic in the space of all games) does not aid in addressing this conjecture which deals specifically with regular potential games.}
Theorem 1 of the present paper combined with Theorem 1 of \cite{swens2017BRdynamics} shows that for the continuous-time version of FP \cite{harris1998rate,shamma2004unified} (which is equivalent to BR dynamics after a time change \cite{harris1998rate}), this conjecture holds generically for potential games of arbitrary size; that is, in any regular potential game (and hence almost all potential games) continuous-time FP dynamics converge to pure NE from almost all initial conditions.

While regular equilibria have traditionally been studied as an equilibrium selection criterion in games, in potential games, pure strategy NE are naturally selected by virtue of the potential function. We emphasize that our objective in studying regular potential games is not to argue that (possibly mixed) regular equilibria are more natural than pure equilibria in potential games. Rather, we wish to demonstrate that (i) in almost all potential games, all equilibria possess desirable structural properties, (ii) properties that hold for regular potential games are inherently robust to payoff perturbations, and (iii) properties that hold only for irregular potential games are inherently nonrobust to payoff perturbations. We note, however, that our main result does imply that the two considerations (having a pure strategy regular NE) can be aligned in generic (i.e., regular) potential games where a potential maximizer must also be a regular equilibrium. We also remark that, as discussed earlier, regular potential games can be particularly useful in the study of game-theoretic learning processes, and a key aim of the present work is to facilitate the study of such learning processes by establishing the genericity of regular potential games.

The remainder of the paper is organized as follows.
In Section~\ref{sec_degen_games_pf_strategy} we outline our strategy for proving Theorem~\ref{thrm_regular_eqilibria}.
Section~\ref{sec_prelims} sets up notation.
%Section \ref{sec_almost_all} defines the notion of ``almost all games'' used in the paper.
Section~\ref{sec_reg_eq} presents a pair of key non-degeneracy conditions that are equivalent to regularity in potential games.
Section~\ref{sec_nondegenerate_games} presents Proposition~\ref{lemma_second_order_degeneracy}  which states that almost all identical payoff games are second-order non-degenerate (this proposition is the technical core of the paper) and sets up notation for analyzing second-order degeneracy (and regularity). Section~\ref{sec:general-games} takes a brief digression to contrast proof techniques in general games vs potential games. Section~\ref{sec:proof-2nd-order-degen} proves Proposition~\ref{lemma_second_order_degeneracy} .
Section~\ref{sec_first_order_degenerate_games} proves that almost all identical payoff games are first-order non-degenerate.
Section~\ref{sec_exact_and_weighted_games} proves that almost all exact and weighted potential games are regular. Section~\ref{sec_conclusion} concludes the paper.

\subsection{Proof Strategy} \label{sec_degen_games_pf_strategy}
Our strategy for proving Theorem~\ref{thrm_regular_eqilibria} is as follows.
A potential game will be seen to be regular if and only if the corresponding identical payoffs game (with the potential function being the common utility) is regular.  Thus, the problem of proving Theorem~\ref{thrm_regular_eqilibria} reduces, by and large, to the following proposition.
\begin{proposition} \label{prop_identical_payoff}
Almost all games with identical payoffs are regular.
\end{proposition}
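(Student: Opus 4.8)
The plan is to combine the characterization of regular equilibria of potential games from Section~\ref{sec_reg_equilib_pot_games} with a parametric transversality argument. Recall from that section the two conditions characterizing regularity: an equilibrium $\sigma$ of the identical-payoff game with common payoff $U$ is regular if and only if (C1) $\sigma$ is quasi-strict --- for every player $i$ the pure best responses to $\sigma_{-i}$ are exactly the actions in $\supp\sigma_i$ --- and (C2) $\sigma$ is a non-degenerate critical point of the restriction of (the multilinear extension of) $U$ to the minimal face of $\Delta:=\prod_i\Delta_i$ containing $\sigma$. A game with identical payoffs of a fixed size is a point $U\in\R^m$ with $m=\prod_i|A_i|$, and the goal is to prove that the set $\mathcal I\subseteq\R^m$ of those $U$ admitting a non-regular equilibrium is closed and has Lebesgue measure zero.

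Every equilibrium lies in the relative interior of exactly one face of $\Delta$, and a face is indexed by a profile $\theta=(\theta_1,\dots,\theta_N)$ of nonempty carriers $\theta_i\subseteq A_i$; since there are finitely many such $\theta$ and finite unions of closed null sets are closed and null, I would fix $\theta$ and show that the games having a non-regular equilibrium with carrier $\theta$ form a null set, deferring closedness. Write $d_\theta:=\sum_i(|\theta_i|-1)=\dim F_\theta$ for the face $F_\theta$, fix a smooth chart identifying $\relint F_\theta$ with an open $O\subseteq\R^{d_\theta}$, and let $g_U\colon O\to\R$ be $U|_{F_\theta}$ in this chart --- a polynomial depending linearly on $U$. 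Then the carrier-$\theta$ equilibria are exactly the zeros in $O$ of the gradient map $x\mapsto\nabla_x g_U(x)$ that also satisfy the off-carrier best-response inequalities $q_{i,a}(U,x)\le 0$ for all $i$ and all $a\notin\theta_i$, where $q_{i,a}(U,x)$ is the amount by which action $a$ would improve player $i$'s equilibrium payoff. Such an equilibrium fails (C2) exactly when the Hessian $D^2_x g_U(x)$ is singular, and fails (C1) exactly when $q_{i,a}(U,x)=0$ for some off-carrier pair $(i,a)$.

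The crux, and the step I expect to be the main obstacle, is to show that both defects disappear for generic $U$ by parametric transversality, which here rests on two structural features of identical-payoff games: $U\mapsto g_U$ is linear, and the image of this map contains enough functions --- already the linear coordinate functions $\sigma\mapsto\sigma_{i,c}$, $c\in\theta_i$, each equal $U|_{F_\theta}$ for a suitable $\{0,1\}$-valued payoff vector $U$ --- that their gradients at any point span $\R^{d_\theta}$. First, regard $G(U,x):=\nabla_x g_U(x)$ as a map $\R^m\times O\to\R^{d_\theta}$; by the span property its partial derivative in $U$ is already surjective, so $G$ is transverse to $\{0\}$, and Thom's parametric transversality theorem yields that for almost every $U$ the section $x\mapsto G(U,x)$ is transverse to $\{0\}$ --- equivalently, since $D_x G(U,x)=D^2_x g_U(x)$ is square, every critical point of $g_U$ is non-degenerate, so (C2) holds at every carrier-$\theta$ equilibrium. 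Second, for each off-carrier pair $(i,a)$ pass to the map $(G,q_{i,a})\colon\R^m\times O\to\R^{d_\theta+1}$; its $U$-partial is still surjective because $q_{i,a}$ depends, with nonvanishing derivative at interior points, on payoff entries $U_b$ with $b_i=a\notin\theta_i$, which do not appear in $g_U$ at all, so this augmented map is again transverse to $\{0\}$; then for almost every $U$ the section is transverse to $\{0\}$, and a map of the $d_\theta$-dimensional $O$ into $\R^{d_\theta+1}$ that is transverse to $\{0\}$ has empty zero set --- so for such $U$ no carrier-$\theta$ equilibrium admits an alternative best response, i.e.\ (C1) holds. Intersecting these full-measure conditions over the finitely many $\theta$ and $(i,a)$ shows that for almost every $U$ every equilibrium is regular.

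Closedness of $\mathcal I$ I would handle as in Harsanyi~\cite{harsanyi1973oddness}: given $U_n\to U$ with non-regular equilibria $\sigma_n$, pass to a subsequence along which $\sigma_n$ converges (the equilibrium correspondence has closed graph) and the carriers stabilize; the limit is an equilibrium of $U$, and the non-regularity conditions, being the vanishing of a payoff difference or of a Hessian determinant, are closed and survive the limit. Apart from the transversality verification, the remaining point needing care is simply importing (C1)--(C2) correctly for non-interior equilibria and confirming that the non-regularity ``defect'' really is the zero set of a polynomial that does not vanish identically along the relevant incidence set --- which is precisely what the span and decoupling properties above provide.
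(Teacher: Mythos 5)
Your proposal is correct, and it reaches the conclusion by a genuinely different route than the paper. The skeleton is the same: both arguments reduce regularity to the two conditions (C1)/(C2) (the paper's first- and second-order non-degeneracy, Lemma \ref{lemma_non_degen_to_regular}), fix a carrier, show each defect is non-generic, take finite unions over carriers, and handle closedness by a separate Harsanyi-style limit argument (the paper's Proposition \ref{remark_closedness}). The engines differ. For (C2) the paper selects, locally in $x$, an invertible $\gamma\times\gamma$ submatrix of the matrix $\vA(x)$ in \eqref{eq_A_equals_0}, builds a countable family of reconstruction maps $\rho_\ell$ on a cover of the face by balls, and applies Sard to each; for (C1) it uses a Lipschitz image / Hausdorff-dimension bound. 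You instead invoke parametric (Thom) transversality once per carrier, and once per carrier and off-carrier action; since that theorem is itself proved by applying Sard to the projection of the incidence manifold onto the parameter space, the two arguments are cousins, but yours avoids the explicit submatrix selection and covering that the paper's Remark \ref{remark_harsanyi_comparison} identifies as the main departure from Harsanyi. The most substantive difference is your justification of the surjectivity of the $U$-derivative of the gradient map: this is exactly the paper's Proposition \ref{proposition_A_full_rank} (full row rank of $\vA(x)$), which the paper proves via the $L$-matrix/sign-solvability characterization (Lemma \ref{lemma_A_L_matrix}) and advertises as its key technical ingredient. Your observation that the indicator payoff vector supported on $\{y: y_i=c\}$ has multilinear extension equal to the coordinate function $\sigma_i(c)$ — so that the linear map $u\mapsto \nabla_x\bigl(U\vert_{F_\theta}\bigr)(x)$ hits every standard basis vector of $\R^{d_\theta}$ at every $x$ — proves the same full-rank statement in two lines, and similarly the decoupling of the off-carrier payoff entries from $g_U$ handles the augmented map for (C1). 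That is a real simplification. What the paper's explicit $\vA(x)=\vR\circ\vP(x)$ analysis buys is a concrete algebraic object that is reused verbatim in the first-order argument via the extended-carrier bookkeeping; what your route buys is brevity and the elimination of the sign-pattern machinery. The points you flag as needing care (importing (C1)--(C2) for boundary equilibria, and closedness) are precisely the ones the paper supplies in Lemma \ref{lemma_non_degen_to_regular} and Proposition \ref{remark_closedness}, so nothing essential is missing.
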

The bulk of the paper will be devoted to proving this proposition. We will prove it as follows.

An equilibrium in an identical-payoffs (or potential) game can be shown to be regular if and only if the derivatives of the potential function satisfy two simple non-degeneracy conditions which we refer to as first- and second-order non-degeneracy (see Section~\ref{sec_reg_equilib_pot_games}).
%(see Section~\ref{sec_reg_eq} and Lemma \ref{lemma_non_degen_to_regular}).
The first-order condition deals with the gradient of the potential function. (A first-order non-degenerate equilibrium is referred to as a \emph{quasi-strong} equilibrium in \cite{harsanyi1973oddness}, and a \emph{quasi-strict} equilibrium in other works \cite{van1991stability}.)\footnote{We prefer the term first-order non-degenerate name because it emphasizes the role of the potential function and is consistent with the notion of second-order degeneracy.} An equilibrium $x^*$ is second-order non-degenerate if the Hessian of the potential function taken with respect to the support of $x^*$ is invertible.

We say a potential game is first-order (second-order) non-degenerate if all equilibria in the game satisfy the first-order (second-order) non-degeneracy conditions.
We will prove Proposition~\ref{prop_identical_payoff} by showing that:
\begin{enumerate}[label=(\roman*)]
\item  An equilibrium of an identical payoffs game is regular if and only if it is first- and second order non-degenerate (see Section~\ref{sec_reg_eq} Lemma~\ref{lemma_non_degen_to_regular}),
\item Almost all games with identical payoffs are second-order non-degenerate (see Section~\ref{sec_second_order_degenerate_games} Proposition~\ref{lemma_second_order_degeneracy}),
\item Almost all games with identical payoffs are first-order non-degenerate (see Section~\ref{sec_first_order_degenerate_games} Proposition~\ref{lemma_first_order_degeneracy}).
\end{enumerate}
We remark that in Propositions~\ref{lemma_second_order_degeneracy} and \ref{lemma_first_order_degeneracy} we show that the subset of irregular games has (appropriately dimensioned) Lebesgue measure zero. By Remark~\ref{remark_closedness} this is sufficient to establish regularity in almost all identical payoff games.

After proving Proposition~\ref{prop_identical_payoff}, we extend the result to exact and weighted potential games. This extension is straightforward and is accomplished in Section~\ref{sec_exact_and_weighted_games} (see Proposition~\ref{prop_exact_and_weighted}). Propositions~\ref{prop_identical_payoff} and~\ref{prop_exact_and_weighted} together prove Theorem~\ref{thrm_regular_eqilibria}.

\section{Notation} \label{sec_prelims}
We will outline the notation and terminology used throughout the paper below. A short glossary of additional standard notation used in the paper can be found in \ref{Appendix-math-notation}.

\subsection{Normal Form Games} \label{sec:prelim-1}
A game in normal form is given by a tuple
$\Gamma:= (N,(Y_i,u_i)_{i=1,\ldots,N}),$ where $N\in\{1,2,\ldots\}$ denotes the number of players, $Y_i:=\{y_i^1,\ldots,y_i^{K_i}\}$ denotes the set of pure strategies (or actions) available to player $i$, with cardinality $K_i := |Y_i|$, and $u_i:\prod_{j=1}^N Y_j \rightarrow \mbb{R}$ denotes the utility function of player $i$.

Given some game $\Gamma$, let $Y:= \prod_{i=1}^N Y_i$ denote the set of joint pure strategies available to players, and let
\begin{equation} \label{def_K}
K := K_1\times\cdots\times K_N
\end{equation}
denote the number of joint pure strategies.
When defining spaces of games (e.g., as in Section~\ref{sec_almost_all}) we will find it convenient to view $u _i= (u_i(y)_{y\in Y})$ as a vector in $\R^K$; we will clearly indicate when using this abuse of notation.

The set of mixed strategies of player $i$ is typically defined to be the probability simplex over $A_i$. It will simplify the presentation to consider an equivalent, but slightly modified, definition of the set of mixed strategies. Let
\begin{equation}\label{def_X-space}
X_i := \{x_i\in \R^{K_i-1}:~ 0\leq x_i^k\leq 1 \mbox{ for } k=1,\ldots,K_i-1, \mbox{ and } \sum_{k=1}^{K_i-1}x_i^k \leq 1\},
\end{equation}
denote the set of mixed strategies of player $i$. (This definition will allow us to perform calculations without being directly encumbered by the hyperplane constraint inherent in the probability simplex.) A strategy $x_i \in X_i$ is interpreted as follows: The scalar $1-\sum_{k=1}^{K_i} x_i^k$ represents the weight placed on the first pure strategy $y_i^1\in Y_i$, and for $k\in \{2,\ldots,K_i-1\}$, $x_i^{k-1}$ represents the weight placed on the $k$-th pure strategy $y_i^{k}$. In order to later reference this interpretation, the following notation will be useful. Given $x_i\in X_i$, let
\begin{equation}\label{def_T}
T_i^1(x_i) := 1-\sum_{k=1}^{K_i-1} x_i^k \quad \text{ and } \quad T_i^k(x_i) := x_i^{k-1}, ~~ k\geq 2.
\end{equation}

Let $X := \prod_{i=1}^N X_i$ denote the set of joint mixed strategies and let $X_{-i}:=\prod_{j\not = i} X_j$. When convenient, given a mixed strategy $x=(x_1,\ldots,x_N)\in X$, we use the notation $x_{-i}$ to denote the tuple $(x_j)_{j\not=i}$.

Given a mixed strategy $x\in X$, the expected utility of player $i$ is given by
\begin{equation} \label{eq_potential_expanded_form2}
U_i(x) = \sum_{k=1}^{K_i-1}x_i^k U_i(y_i^{k+1},x_{-i}) + \left(1-\sum_{k=1}^{K_i-1}x_i^k\right)U_i(y_i^{1},x_{-i}).
\end{equation}

A strategy $x\in X$ is said to be a \emph{Nash equilibrium} (or simply an \emph{equilibrium}) if $x_i \in \arg\max_{x_i'\in X_i} U_i(x_i',x_{-i})$ for all $x_i'\in X_i$, $i=1,\ldots,N$.

We say that a strategy $x_i$ is completely mixed if it lies in the interior of $X_i$ and we say that $x_i$ is a pure strategy if is a vertex of $X_i$. Otherwise, we say that $x_i$ is an incompletely mixed strategy.

%We now define the notion of the \emph{carrier set} of an element $x\in X$, a natural modification of a support set to the present context.
For $x_i \in X_i$, we define the \emph{carrier set} of $x_i$ to be
%(or $x_i\in X_i$)
\begin{equation} \label{def_carrier}
\carr_i(x_i) := \{y_i^k\in Y_i: T_i^k(x_i)>0\},
%x_i^{k-1} > 0, ~k\geq 2\}.
\end{equation}
that is, $\carr_i(x_i)$ is the subset of pure strategies in $Y_i$ that receive positive weight under $x_i$. (This is just a slight modification of the notion of a \emph{support set} to the present context.)
For $x = (x_1,\ldots,x_N)\in X$ let $\carr(x) := \carr_1(x_1)\cup\cdots\cup\carr_N(x_N)$.

Suppose $C=C_1\cup\cdots\cup C_N$, where for each $i=1,\ldots,N$, $C_i$ is a nonempty subset of $Y_i$. We say that $C$ is the carrier for $x=(x_1,\ldots,x_N)\in X$ if $C=\carr(x)$, or equivalently, if $C_i = \carr_i(x_i)$ for $i=1,\ldots,N$. 

\subsection{Potential Games} \label{sec_pot_games_prelim}
Following \cite{Mond96,Mond01}, a game is said to be a \emph{weighted potential game} if there exists a function
$u:Y\rightarrow \R$ and a vector of positive weights $(w_i)_{i=1}^N\in\R^N$ such that
\begin{equation} \label{eq_def_weighted_pot}
u_i(y_i',y_{-i}) - u_i(y_i'',y_{-i}) = w_i\big(u(y_i',y_{-i}) - u(y_i'',y_{-i})\big)
\end{equation}
A game is said to be an \emph{exact potential game} if \eqref{eq_def_weighted_pot} holds with $w_i=1$ for all $i=1,\ldots,N$. A game is said to be an \emph{identical-payoffs game} if there exists a function $u:Y\rightarrow \R$ such that $u_i(y) = u(y)$ for all $y\in Y$, $i=1,\ldots,N$.

When we refer simply to a ``potential game'' we mean a weighted potential game, which includes the other classes of games as special cases.

Given a potential game, we let $U: X\to \R$ be the multilinear extension of $u$ defined by
\begin{equation} \label{eq_potential_expanded_form2}
U(x) = \sum_{k=1}^{K_i-1}x_i^k U(y_i^{k+1},x_{-i}) + \left(1-\sum_{k=1}^{K_i-1}x_i^k\right)U(y_i^{1},x_{-i}).
\end{equation}
We will typically refer to $U$ as the \emph{potential function} and to $u$ as the \emph{pure form of the potential function}.

By way of notation, given a pure strategy $y_i \in Y_i$ and a mixed strategy $x_{-i} \in X_{-i}$, we will write $U(y_i,x_{-i})$ to indicate the value of $U$ when player $i$ uses a mixed strategy placing all weight on the $y_i$ and the remaining players use the strategy $x_{-i}\in X_{-i}$.

\subsection{Almost All Games} \label{sec_almost_all}

The following definition specifies the notion of ``almost all'' to be used in the paper.
\begin{definition} \label{def_almost_all}
Given some property $\calS$, we say that $\calS$ holds for almost all points in some Euclidean space $\R^n$ if the set where $\calS$ fails to hold is a closed set with $\calL^n$-measure zero.
\end{definition}

We say that a game $\Gamma$ has size $(N,(K_i)_{i=1}^N)$ if $\Gamma$ is a general $N$-player game and the size of the action space of each player satisfies $|Y_i|= K_i \in \{2,3,\ldots\}$, $i=1,\ldots,N$. Let $K$ be as defined in \eqref{def_K} so that $K$ gives the number of pure strategies in a game of size $(N,(K_i)_{i=1}^N)$.

%Suppose that a game of size $(N,(K_i)_{i=1}^N)$ is fixed.
A game of size $(N,(K_i)_{i=1}^N)$ is uniquely represented by the vector $u:=(u_i(y))_{y\in Y,~ i=1,\ldots,N} \in \R^{NK}$ which specifies the utility received by each player for each pure strategy $y\in Y$. We will frequently refer to $u$ as the vector of \emph{utility coefficients}. The set of all games of this size is equivalent to $\R^{NK}$.
%We say that almost all games satisfy a given property if, for any game size  $(N,(K_i)_{i=1}^N)$, the set of games

Let $\calW, \calP, \calI\subset \R^{NK}$, denote the subsets of weighted potential games, exact potential games, and identical-payoffs games respectively. (An explicit construction of $\calW$ and $\calP$ can be found in Section~\ref{sec_exact_and_weighted_games}.) Let $K_w := \dim \calW$ and $K_p:= \dim\calP$. The sets of weighted and exact potential games are equivalent to the Euclidean spaces $\R^{K_w}$ and $\R^{K_p}$ respectively.

%For the set of identical payoff games, we have $\dim \calI = K$, where $K$ is given in \eqref{def_K}. In particular,
An identical-payoffs game is uniquely represented as a vector $u\in \R^K$ denoting the payoff (identical for all players) received for each action $y\in Y$, and we will represent this set of games as  $\calI := \R^K.$

%Per Definition \ref{def_almost_all},
For a fixed game size, each class of games discussed above is equivalent to some Euclidean space.
%For a set game size, the each of the above game classes corresponds to Euclidean space of a certain dimension.
We will say that almost all games in a given class are regular if, for any game size, almost all games (per Definition~\ref{def_almost_all}) in the class are regular.% where the dimension of the Lebesgue measure corresponds to the dimension of the corresponding Euclidean space.

\begin{remark}[Closedness] \label{remark_closedness}
It was shown by Harsanyi \cite{harsanyi1973oddness,van1991stability} that the set of irregular  games of (any) size $(N,(K_i)_{i=1}^N)$ is a closed subset in the space of all games of size $(N,(K_i)_{i=1}^N)$. It is straightforward to extend this result to the various subclasses of potential games. In particular, we have:\\
(i) The set of irregular weighted potential games is closed with respect to $\calW$.\\
(ii) The set of irregular exact potential games is closed with respect to $\calP$.\\
(iii) The set of irregular games with identical payoffs is closed with respect to $\calI$.

For practical purposes this means that in order to verify that almost all potential games of a given class are regular, we need only verify that the subset of irregular games has appropriately dimensioned Lebesgue measure zero.
\end{remark}

\section{Regular Equilibria in Potential Games} \label{sec_reg_eq} \label{sec_reg_equilib_pot_games}
In potential games, regular equilibria have a simple and intuitive meaning.\footnote{In general games, the definition of a regular equilibrium is somewhat abstruse, relying on the invertibility of the Jacobian of a particular map \cite{van1991stability}. See Section~\ref{sec:def-regularity-standard} for more details.} An equilibrium in a potential game is regular if (i) it is quasi-strict in the sense of \cite{van1991stability} (a condition we will refer to as first-order non-degeneracy), and (ii) the Hessian of $U$ (the potential) taken with respect to the support of the equilibrium is invertible (a condition we will refer to as second order non-degeneracy). For equilibria in the interior of the strategy space, this simply reduces to an equilibrium being regular if and only if it is a non-degenerate critical point of $U$ in the traditional sense.

%The traditional definition of regularity will be given in Section ?? when we discuss general games.

Since the non-degeneracy conditions given in this section (applicable only within the class of potential games) are simpler to work with than the traditional definition of regularity, we will work with directly with these conditions through the remainder of the paper. However, the traditional definition of regularity can be found in Section~\ref{sec:general-games} (see Definition~\ref{def_regular_equilib}) where we contrast our techniques with those required in general games.

The remainder of the section is organized as follows.
In Sections~\ref{sec_first_order_degeneracy}--\ref{sec_second_order_degeneracy} we define the notions of first and second-order degeneracy. In Section~\ref{sec_degen_and_reg_equilibria} we show that, in a potential game, these conditions are equivalent to regularity.

\subsection{First-Order Degeneracy} \label{sec_first_order_degeneracy}
%Let $\Gamma$ be a game with identical payoffs.
Let $C=C_1\cup\cdots\cup C_N$, $C_i\subset Y_i$ $\forall i=1,\ldots,N$ be some carrier set.
Let $\gamma_i := |C_i|$
and assume that the strategy set $Y_i$ is reordered so that $\{y_i^1,\ldots,y_i^{\gamma_i}\}=C_i$. Under this ordering, the first $\gamma_i-1$ components of any strategy $x_i$ with $\carr_i(x_i) = C_i$ are free (not constrained to zero by $C_i$) and the remaining components of $x_i$ are constrained to zero. That is, the subvector $(x_i^k)_{k=1}^{\gamma_i-1}$ is free under $C_i$ and the subvector $(x_i^k)_{k=\gamma_i}^{K_i}=0$.
The set of strategies $\{x\in X:~\carr(x) = C\}$ is precisely the interior of the face of $X$ given by
\begin{equation}\label{def_C_face}
X_C :=\{x\in X:~ x_i^k = 0,~ k=\gamma_i,\ldots,K_i-1,~i=1,\ldots,N \}.
\end{equation}

\begin{definition} [First-Order Degenerate Equilibrium] \label{def_first_order_degen}
Suppose $\Gamma$ is a potential game with potential function $U$.
Suppose $x^*\in X$ is an equilibrium of $\Gamma$ with carrier $C$. We say that $x^*$ is a \emph{first-order degenerate} equilibrium if there exists a pair $(i,k)$, $i=1,\ldots,N$, $k=\gamma_i,\ldots,K_i-1$ such that
\begin{equation} \label{eq_first_order_condition}
\frac{\partial U(x^*)}{\partial x_i^k}=0
\end{equation}
and we say $x^*$ is \emph{first-order non-degenerate} otherwise.
\end{definition}
We note that the condition \eqref{eq_first_order_condition} in the definition of first-order degeneracy is implicitly dependent on the carrier $C$ (since $\gamma_i = |C_i|$ and we assume that $C_i= \{y_i^1,\ldots,y_i^{\gamma_i}\}$).
\begin{definition}[First-order Degenerate Game]
We say a game is first-order degenerate if it has an equilibrium that is first-order degenerate, and we say the game is first-order non-degenerate otherwise.
\end{definition}
\begin{example}
Consider the $3\times 2$ identical-payoffs game with payoff matrices
%$$
%M_1 =
%\begin{pmatrix}
%  -4 & 1 \\
%  1 & -1
%\end{pmatrix}
%\quad
%M_2 =
%\begin{pmatrix}
%  1 & -1 \\
%  2 & 1
%\end{pmatrix},
%$$
$$
M_1 =
\begin{pmatrix}
  1 & -4 \\
  -1 & 1
\end{pmatrix}
\quad
M_2 =
\begin{pmatrix}
  -1 & 1 \\
  1 & 2
\end{pmatrix},
$$
where player 1 is the ``row player'', player 2 is the ``column player'',  and player 3 is the ``matrix player.'' Recalling that $x_i$ denotes the probability of player $i$ playing action $y_i^2$ (since each player has only two strategies, we drop the superscipts on $x_i$),
this game has a first-order degenerate equilibrium at the strategy $x^*=(1/2,0,1/2)$.

This may be visualized in terms of the gradient of $U$. The three (gradient) level surfaces along which $\frac{\partial U(x)}{\partial x_i} = 0$, $i=1,2,3$, are displayed in Figures~\ref{fig:view1}--\ref{fig:view2}. The equilibrium $x^*$ has carrier $C = Y_1 \cup \{y_2^1\} \cup Y_3$ and lies on the face of $X$ given by $\{x\in X: x_2=0\}$. Yet, $x^*$ also lies at the intersection of all three level surfaces (and thus is a bona fide critical point of $U$). In particular, $x^*$ lies on the level surface, $\{x: \frac{\partial U(x^*)}{\partial x_2}=0\}$, making it first-order degenerate.
\begin{figure}[h]
    \centering
    \begin{subfigure}{.45\textwidth}
        %\centering
        \includegraphics[height=.85\textwidth]{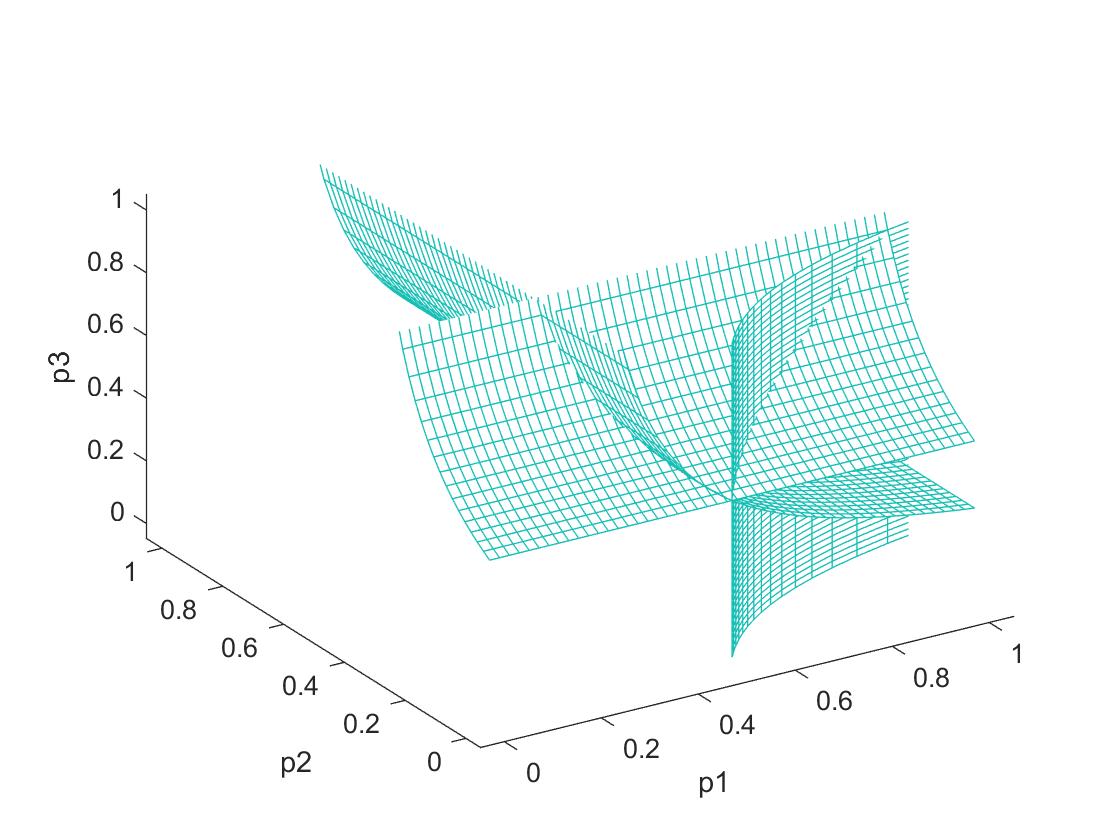}
        \caption{Level surfaces of $\frac{\partial U(x)}{\partial x_i}$, $i=1,2,3$.}
        \label{fig:view1}
    \end{subfigure}
    \hspace{.8cm}
    \begin{subfigure}{.45\textwidth}
        %\centering
        \includegraphics[height=.85\textwidth]{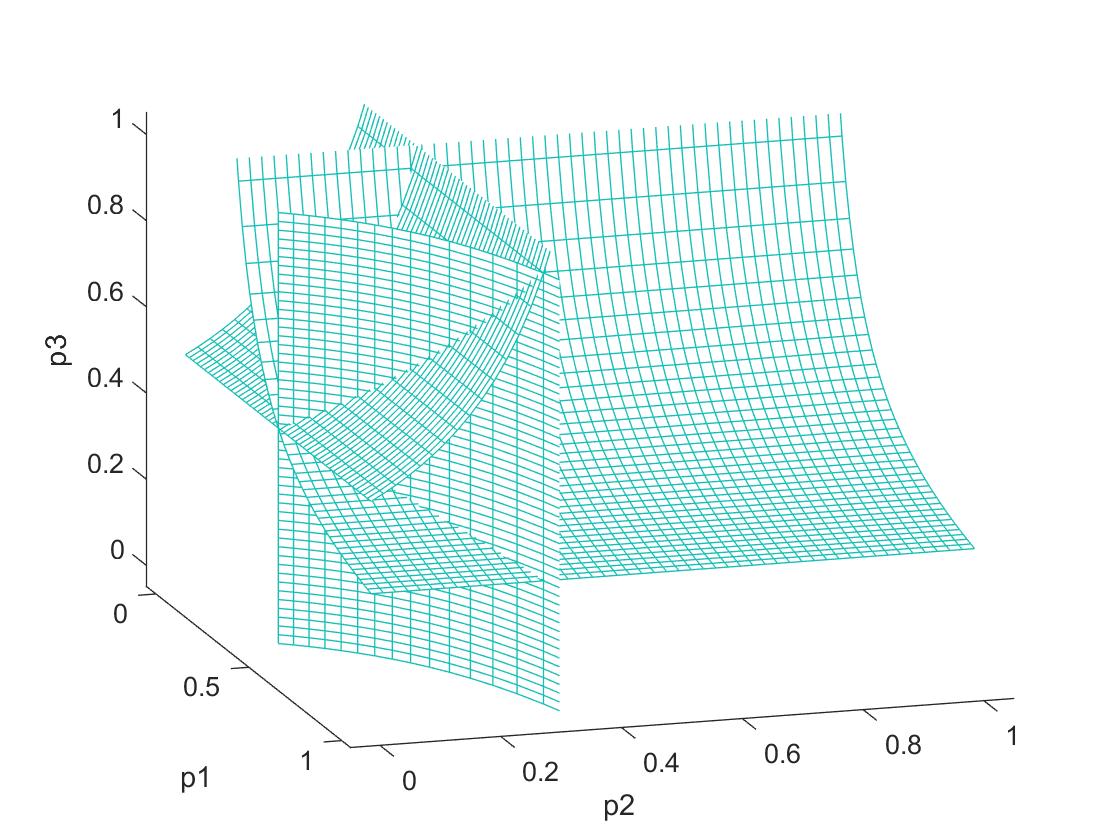}
        \caption{Alternate view of gradient level surfaces.}
        \label{fig:view2}
    \end{subfigure}
\end{figure}
\end{example}

\begin{remark} [Equivalence to Quasi-Strict Equilibria] \label{remark_QS_first-order}
An equilibrium $x^*$ with carrier $C$ is first order non-degenerate if and only if, for every player $i$, the set of pure-strategy best responses to $x_{-i}^*$ coincides with $C_i$.
(This is verified using the multi-linearity of $U$.)
We note that, using this later definition, Harsanyi \cite{harsanyi1973oddness} referred to first-order non-degenerate equilibria as \emph{quasi-strong} equilibria. In other works these have been referred to as \emph{quasi-strict} equilibria \cite{van1991stability}.
%it is quasi-strong, as introduced by Harsanyi \cite{harsanyi1973oddness}.
We prefer to use the term first-order non-degenerate in order to emphasize that we are concerned with the gradient of the potential function and to keep the nomenclature consistent with the notion of second-order non-degeneracy, introduced next.
\end{remark}
%\begin{remark}
%In Definition \ref{def_first_order_degen} we considered an arbitrary potential function $U$ associated with $\Gamma$. This is justified since \eqref{eq_first_order_condition} holds for some potential function $U$ of $\Gamma$ if and only if it holds for every potential function of $\Gamma$. This follows readily by differentiating \eqref{eq_potential_expanded_form2} and using Definition \ref{def_weighted_pot_game} and the definitions of expected utility/potential \eqref{def_U_i},\eqref{def_potential_fun1}.
%\end{remark}

\subsection{Second-Order Degeneracy} \label{sec_second_order_degeneracy}
Suppose that $\Gamma$ is a potential game with potential function $U$.
Let $C$ be some carrier set. Let $\tilde{N}:=|\{i=1,\ldots,N:~\gamma_i \geq 2\}|$, and assume that the player set is ordered so that $\gamma_i\geq 2$ for $i=1,\ldots,\tilde{N}$. Under this ordering, for strategies with $\carr(x) = C$, the first $\tilde{N}$ players use mixed strategies and the remaining players use pure strategies.
Assume that $\tilde{N}\geq 1$ so that any $x$ with carrier $C$ is a genuine mixed (not pure) strategy.
% so that an $x$ with carrier $C$ is a mixed strategy.

Let the Hessian of $U$ taken with respect to $C$ be given by
\begin{equation} \label{def_mixed_hessian}
\tilde{\vH}(x) :=\left( \frac{\partial^2 U(x)}{\partial x_i^k \partial x_j^\ell} \right)_{\substack{i,j=1,\ldots,\tilde{N},\\ k=1,\ldots,\gamma_i-1,\\ \ell=1,\ldots,\gamma_j-1}}.
\end{equation}
Note that this definition of the Hessian restricts attention to the components of $x$ that are free (i.e., unconstrained) under $C$. That is, $\tilde \vH(x)$ taken with respect to $C$ is the Hessian of $U\vert_{X_C}$ at $x$.

\begin{definition} [Second-Order Degenerate Equilibrium] \label{def_second_order_degen}
Let $\Gamma$ be a potential game with potential function $U$.
We say an equilibrium $x^* \in X$ of $\Gamma$ is \emph{second-order degenerate} if the Hessian
%$\vH_{\carr(x^*)}(x^*)$
$\tilde{\vH}(x^*)$ taken with respect to $\carr(x^*)$
is singular, and we say $x^*$ is \emph{second-order non-degenerate} otherwise.
\end{definition}
\begin{definition}[Second-Order Degenerate Game]
We say a game is second-order degenerate if it has an equilibrium that is second-order degenerate, and we say the game is second-order non-degenerate otherwise.
\end{definition}

\subsection{Regular Equilibria and Non-Degeneracy Conditions} \label{sec_degen_and_reg_equilibria}
The following lemma shows that within the class of potential games, the first and second-order non-degeneracy conditions defined above are equivalent to regularity.
\begin{lemma} \label{lemma_non_degen_to_regular}
Let $\Gamma$ be a potential game.  Then,\\
(i) If an equilibrium $x^*$ is first-order non-degenerate, then it is second-order non-degenerate if and only if it is regular.\\
(ii) If an equilibrium $x^*$ is regular, then it is first-order non-degenerate.

In particular, an equilibrium $x^*$ is regular if and only if it is both first and second-order non-degenerate.
\end{lemma}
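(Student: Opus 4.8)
The plan is to relate the Jacobian $D_x\tilde F(x^*,u)$ from Definition \ref{def_regular_equilib} directly to the gradient and Hessian data appearing in Definitions \ref{def_first_order_degen} and \ref{def_second_order_degen}, by organizing the index set $(i,k)$, $k=1,\ldots,K_i-1$, according to whether $k$ is ``free'' under the carrier $C=\carr(x^*)$ (i.e.\ $k\le\gamma_i-1$) or ``constrained'' ($k\ge\gamma_i$), after reordering each $Y_i$ so that $y_i^1\in C_i$ and $\{y_i^1,\dots,y_i^{\gamma_i}\}=C_i$. Writing $x^*$ in block form $x^*=(x^*_{\mathrm{free}},0)$, I would evaluate $\tilde F_i^k$ and its partials at $x^*$. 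For a constrained index $k\ge\gamma_i$ we have $x_i^k=0$ at $x^*$, so $\tilde F_i^k(x^*,u)=0$, and differentiating the product $T_i^1(x_i)\,x_i^k\,[U(y_i^{k+1},x_{-i})-U(y_i^1,x_{-i})]$ kills every term except the one where $\partial/\partial x_i^k$ hits the explicit factor $x_i^k$; that surviving term equals $T_i^1(x^*)\big[U(y_i^{k+1},x^*_{-i})-U(y_i^1,x^*_{-i})\big]$. Since $y_i^1\in\BR_i(x^*_{-i})$ (as $x^*$ is an equilibrium with $y_i^1$ in the carrier) and $T_i^1(x^*)>0$, this scalar is, up to the positive constant $T_i^1(x^*)$, exactly $\partial U(x^*)/\partial x_i^k$ (using the expansion \eqref{eq_potential_expanded_form2} and $\partial U/\partial x_i^k = U(y_i^{k+1},x_{-i})-U(y_i^1,x_{-i})$ in identical-payoff games). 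Moreover for a constrained $k$, $\partial \tilde F_i^k(x^*,u)/\partial x_j^\ell$ vanishes for every $(j,\ell)$ with $\ell$ free or $(j,\ell)\ne(i,k)$ with $\ell$ constrained, because each term in the product rule still contains a factor $x_i^k=0$ unless the derivative lands on that factor. Thus the rows indexed by constrained $(i,k)$ form a diagonal block whose diagonal entries are $T_i^1(x^*)\,\partial U(x^*)/\partial x_i^k$, and this block couples to nothing else.

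Consequently the Jacobian has, after a permutation of rows and columns, a block lower-triangular (in fact block-diagonal) structure: one block of size equal to the number of free coordinates $\sum_i(\gamma_i-1)$, coming from the free indices, and one diagonal block over the constrained indices. Hence $D_x\tilde F(x^*,u)$ is nonsingular if and only if (a) every constrained-index diagonal entry $T_i^1(x^*)\,\partial U(x^*)/\partial x_i^k$ is nonzero, and (b) the free block is nonsingular. Condition (a), since $T_i^1(x^*)>0$, is precisely first-order non-degeneracy (no vanishing $\partial U(x^*)/\partial x_i^k$ for $i=1,\dots,N$, $k=\gamma_i,\dots,K_i-1$), proving (ii): regularity forces (a), hence first-order non-degeneracy. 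For (i), assume first-order non-degeneracy; then regularity reduces to nonsingularity of the free block. I claim the free block equals $T_i^1(x^*)$-scaled rows of $\tilde{\vH}(x^*)$ plus lower-order (actually zero) corrections: differentiating $\tilde F_i^k=T_i^1(x_i)\,x_i^k\,[U(y_i^{k+1},x_{-i})-U(y_i^1,x_{-i})]$ with respect to a free $x_j^\ell$ and evaluating at $x^*$, the terms where the derivative hits $x_i^k$ or $T_i^1(x_i)$ produce factors $[U(y_i^{k+1},x^*_{-i})-U(y_i^1,x^*_{-i})]=\partial U(x^*)/\partial x_i^k$, which is zero for a free index $k\le\gamma_i-1$ precisely because $x^*$ is an equilibrium and all of $y_i^1,\dots,y_i^{\gamma_i}$ are best responses (they lie in the carrier). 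The only surviving term is $T_i^1(x^*)\,x_i^{*k}\cdot\partial[U(y_i^{k+1},x_{-i})-U(y_i^1,x_{-i})]/\partial x_j^\ell$, which after recognizing $U(y_i^{k+1},x_{-i})-U(y_i^1,x_{-i})=\partial U/\partial x_i^k$ is $T_i^1(x^*)\,x_i^{*k}\,\partial^2 U(x^*)/\partial x_i^k\partial x_j^\ell$. So the free block is $D\,\tilde{\vH}(x^*)$ where $D$ is the diagonal matrix with entries $T_i^1(x^*)\,x_i^{*k}$; for a completely-mixed-on-its-carrier strategy these are strictly positive, $D$ is invertible, and nonsingularity of the free block is equivalent to nonsingularity of $\tilde{\vH}(x^*)$, i.e.\ to second-order non-degeneracy. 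Combining, under first-order non-degeneracy, regular $\Longleftrightarrow$ second-order non-degenerate; together with (ii) this yields the final ``in particular'' statement.

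The main obstacle is bookkeeping the chain/product-rule expansion of $D_x\tilde F(x^*,u)$ carefully enough to see that (1) every ``cross'' term that would spoil the block-triangular structure actually carries a vanishing factor — either $x_i^{*k}=0$ (constrained index) or $\partial U(x^*)/\partial x_i^k = U(y_i^{k+1},x^*_{-i})-U(y_i^1,x^*_{-i})=0$ (free index, by the equilibrium/carrier condition) — and (2) the surviving diagonal/free blocks are exactly positive-diagonal rescalings of the first-order gradient data and of $\tilde{\vH}(x^*)$. One must also handle the edge case $\tilde N=0$ (pure-strategy equilibrium), where there are no free coordinates, $\tilde{\vH}$ is the empty matrix (vacuously nonsingular), and regularity collapses to the first-order condition alone — consistent with the statement. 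The identity $\partial U(x)/\partial x_i^k = U(y_i^{k+1},x_{-i})-U(y_i^1,x_{-i})$ follows directly from \eqref{eq_potential_expanded_form2} and is the one computational fact I would invoke repeatedly; everything else is linear algebra on block matrices and the observation that $T_i^1$ is positive and smooth on the relevant face.
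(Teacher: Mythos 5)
Your proof is correct and follows essentially the same route as the paper's: split the indices $(i,k)$ into free ($k\le\gamma_i-1$) and constrained ($k\ge\gamma_i$), use the product rule together with $x_i^{*k}=0$ (constrained) and $F_i^k(x^*,u)=0$ (free, from the carrier/equilibrium condition) to exhibit the block-triangular structure of $D_x\tilde F(x^*,u)$, and identify the two diagonal blocks as positive rescalings of $\tilde{\vH}(x^*)$ and of the first-order gradient data. The only divergence is part (ii): the paper obtains it by citing van Damme's result that regular equilibria are quasi-strong, whereas you read it off directly from the determinant factorization of the block-triangular Jacobian --- a slightly more self-contained variant of the same computation.
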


The proof of this lemma follows readily from the definitions of regularity (see Definition~\ref{def_regular_equilib}) and first and second-order degeneracy, and is omitted for brevity.
%A proof of the lemma can be found in \ref{appendix_degen_v_regular}.

\section{Second-Order Degeneracy in Games with Identical Payoffs} \label{sec_nondegenerate_games}

%In order to prove Proposition \ref{prop_identical_payoff}, it is sufficient to prove that almost all games are first- and second-order non-degenerate.

%We will now prove Proposition \ref{prop_identical_payoff}, which states that almost all identical interest games are regular.

In the previous section we established that a game is regular if and only if it is first- and second-order non-degenerate. In this section we will state our main result regarding second-order non-degeneracy---namely, that the set of identical payoff games that are second-order degenerate has Lebesgue measure zero (see Proposition~\ref{lemma_second_order_degeneracy}).
Later, in Section~\ref{sec_first_order_degenerate_games} we will show the analogous result for first-order degenerate games (Proposition~\ref{lemma_first_order_degeneracy}).
By Remark~\ref{remark_closedness}, this will be sufficient to prove Proposition~\ref{prop_identical_payoff}.

This section is organized as follows. In Section~\ref{sec_second_order_degenerate_games} we state our main result for second-order degenerate games. In Section~\ref{sec_roadmap} we outline our strategy for proving Proposition~\ref{lemma_second_order_degeneracy}. In Section~\ref{sec_proof-2nd-order} we set up notation required for the analysis of second-order non-degeneracy (and the traditional notion of regularity).
The proof of Proposition~\ref{lemma_second_order_degeneracy} will then be given in Section~\ref{sec:proof-2nd-order-degen} after a brief interlude to discuss general games.

\subsection{Second-Order Degenerate Games} \label{sec_second_order_degenerate_games}

In the following proposition  we will assume the number of players $N$ and action-space sizes $K_i$, $i=1,\ldots,N$ are fixed, and let $K$ be as defined in \eqref{def_K}.
%(cf. \eqref{def_calI}).
%The goal of this subsection is to prove the following proposition.
\begin{proposition} \label{lemma_second_order_degeneracy}
%Almost all potential games are second-order non-degenerate.
The set of identical-payoff games that are second-order degenerate has $\calL^K$-measure zero.
\end{proposition}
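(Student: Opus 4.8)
The plan is to fix a carrier set $C=C_1\cup\dots\cup C_N$ and to show that the set of identical-payoffs games $u\in\calI=\R^K$ having a second-order degenerate equilibrium with carrier $C$ is $\calL^K$-null; since there are only finitely many carriers, and since second-order degeneracy is only defined for carriers with $\tilde N\ge 1$ (pure equilibria being excluded in Section \ref{sec_second_order_degeneracy}), a finite union over carriers then yields the proposition. So fix such a $C$, adopt the player/action ordering of Section \ref{sec_second_order_degeneracy}, write $n_C:=\sum_{i=1}^{\tilde N}(\gamma_i-1)$ for the number of free coordinates under $C$, and set $\calO:=\relint\Omega_C$, which by the discussion after \eqref{def_C_face} equals $\{x\in X:\carr(x)=C\}$ and is open in an $n_C$-dimensional affine subspace.

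I would next record the two facts that make the problem tractable on $\calO$. First, differentiating \eqref{eq_potential_expanded_form2} shows that any equilibrium $x^*\in\calO$ must be a critical point of $U|_{\Omega_C}$, i.e.\ must satisfy $G(x^*,u)=0$, where $G:\calO\times\R^K\to\R^{n_C}$ is $G(x,u):=\big(\partial U(x)/\partial x_i^k\big)_{i\le\tilde N,\ k\le\gamma_i-1}$; only this one implication is needed, not the converse. Second, such an $x^*$ is second-order degenerate precisely when $D_xG(x^*,u)=\tilde{\vH}(x^*)$ is singular. The crucial structural point is that $U$ is linear in the utility vector, hence so is $G$, so $D_uG(x,u)=A(x)$ does not depend on $u$; and $A(x)$ has full row rank $n_C$ for \emph{every} $x\in\calO$. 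This last statement is the rank characterization of the linear map relating equilibria to utility coefficients (Proposition \ref{proposition_A_full_rank}), whose proof rests on signsolvability results \cite{MR743051}; I expect this full-rank property to be the main obstacle, everything else being routine.

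Granting it, $G$ is a submersion, so $M:=G^{-1}(0)$ is a smooth manifold of dimension $K$. Around any point of $M$, the implicit function theorem lets me split $u=(u',u'')$ with $u'\in\R^{n_C}$ chosen so that the corresponding $u'$-submatrix $A'(x)$ of $A(x)$ is invertible nearby, and solve $G=0$ as $u'=\phi(x,u'')$ with $\phi$ smooth, so $M$ is locally the graph of $\phi$, parametrized by $(x,u'')\in\R^K$. On this chart I would consider the smooth self-map $\Psi(x,u''):=(\phi(x,u''),u'')$ of an open subset of $\R^K$ (the projection of $M$ onto the game coordinate). Implicitly differentiating $G(x,\phi(x,u''),u'')=0$ gives $D_x\phi=-A'(x)^{-1}\tilde{\vH}(x)$, so $\det D\Psi$ vanishes exactly when $\tilde{\vH}$ is singular. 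Hence any game in the image of this chart that possesses a second-order degenerate equilibrium with carrier $C$ is a critical value of $\Psi$, so by Sard's theorem \cite{hirsch1976differential} such games form an $\calL^K$-null set. Since $M$ is covered by countably many such charts and every second-order degenerate equilibrium with carrier $C$ lies over a point of $M$, a countable union bounds the bad set for $C$, and a final finite union over carriers completes the argument; apart from the full-rank input to Proposition \ref{proposition_A_full_rank}, every step is a routine application of the implicit function theorem, a determinant computation, and Sard's theorem.
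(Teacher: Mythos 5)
Your proposal is correct and follows essentially the same route as the paper's own proof: the full-row-rank property of $\vA(x)$ (Proposition \ref{proposition_A_full_rank}) is indeed the key input, your map $\Psi(x,u'')=(\phi(x,u''),u'')$ with $\phi(x,u'')=-A'(x)^{-1}A''(x)u''$ is exactly the paper's $\rho_\ell$, the identity $D_x\phi=-A'(x)^{-1}\tilde{\vH}(x)$ is the paper's equation \eqref{non-degeneracy_eq1}, and the countable chart cover plus Sard plus the finite union over carriers mirrors the paper's cover by balls $B_\ell$. The only cosmetic difference is that you phrase the local inversion via the implicit function theorem on the manifold $G^{-1}(0)$, whereas the paper solves the linear system $\vA(x)u=0$ explicitly.
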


This proposition will be proved in Section~\ref{sec_proof-2nd-order}.

\subsection{Roadmap of Proof of Proposition~\ref{lemma_second_order_degeneracy}} \label{sec_roadmap}
Our strategy for proving Proposition~\ref{proposition_A_full_rank} will be as follows.

1. Fix a carrier set $C$ with $|C|=\gamma$. Following \eqref{def_C_face}, let
\begin{equation*} %\label{eq:X-star_C}
\rOmega_C := \{x:\carr(x) = C\}.
\end{equation*}
We will begin by deducing that for any equilibrium $x^*\in\rOmega_C$, the key relation \eqref{eq_A_equals_0} holds, where $\vA(x)$ is defined in \eqref{def_A}.

2. Proposition~\ref{proposition_A_full_rank} shows that for every $x\in \rOmega_C$, the matrix $\vA(x)$ has full row rank.%\footnote{We note that the proof Proposition \ref{proposition_A_full_rank} relies on results from the theory of sign pattern matrices and is somewhat tedious. The reader may wish to skip the proof of this proposition on a first reading.}

3. Using Proposition~\ref{proposition_A_full_rank} we construct a countable cover $(B_\ell)_{\ell\geq 1}$ of $\rOmega_C$ such that for each $\ell$, we may choose $\gamma$ columns of $\vA(x)\in \R^{\gamma\times K}$ so that these same columns are linearly independent for all $x\in B_\ell$. For each $\ell$, the associated columns form an invertible square submatrix of $\vA(x)$ for all $x\in B_\ell$. Using this invertible submatrix and \eqref{eq_A_equals_0}, we construct a mapping $\rho_\ell:B_\ell\times \R^{K-\gamma}\to \R^K$, which allows us to recover the full vector of potential coefficients $u\in\R^K$ given an equilibrium $x^*\in\rOmega_C$ and a $(K-\gamma)$-dimensional subvector of the potential coefficients. (See Lemma~\ref{lemma_cover}, \eqref{eq_rho_def1}--\eqref{eq_rho_def2}, and Corollary~\ref{cor_cover}.)

4. We then demonstrate that if $x^*\in B_\ell$ is a NE of a game with potential-function coefficients $u\in\R^K$, then $x^*$ is second-order degenerate if and only if the Jacobian of $\rho_\ell$ is non-invertible. (See Lemma~\ref{lemma_critical_vals}.) Since the output of $\rho_\ell$ is the full vector of coefficients $u$, this implies that $u$ lies in the set of critical values of $\rho_\ell$ (i.e. the set of outputs of $\rho_\ell$ at which the Jacobian is not invertible).

5. From here, the proof follows from Sard's theorem \cite{hirsch1976differential}. In particular, applying Sard's theorem to $\rho_\ell$ shows that the set of games with second-order degenerate equilibria lying in $B_\ell$ has measure zero. Repeating this argument over all $B_\ell$ (of which there is a countable number) and for all carriers $C$ (of which there is a finite number) yields Proposition~\ref{lemma_second_order_degeneracy}.

We remark that the above proof strategy follows the same general strategy used by Harsanyi in \cite{harsanyi1973oddness} to prove regularity in general games. However, important technical challenges arise in the case of potential games that are not present in general games. This is discussed further in Sections~\ref{sec:gen-games-proof-strategy}--\ref{sec:pot-game-proof-technique}.

\subsection{Analysis of Second-Order Degeneracy: Setup} \label{sec_proof-2nd-order}
%We will prove the proposition using Sard's theorem. Our construction roughly follows that of \cite{harsanyi1973oddness}.
Proposition~\ref{lemma_second_order_degeneracy} will be proved using Sard's theorem as outlined above. In this section we introduce some pertinent notation that will allow us to establish \eqref{eq_A_equals_0}, which is the key relationship allowing us to analyze second-order degeneracy.\footnote{We note that this setup is useful both for analyzing potential games and general games. Indeed, in Section~\ref{sec:general-games} this setup will be reused to establish the key inequality \eqref{eq_B_equals_zero}, which is the analog of \eqref{eq_A_equals_0} in general games.}
%(Indeed, in general games, an analogous result to \eqref{eq_A_equals_0} enables the proof that almost all $N$-player games are regular. This relationship will be explored in Section \ref{sec:general-games}.)

%construct the mapping used in our application of Sard's theorem.

%Before proving the proposition we first introduce some pertinent concepts and notation.

Note that the set of joint pure strategies $Y$ may be expressed as an ordered set $Y=\{y^1,\ldots,y^K\}$ where each $y^\tau\in Y$, is an $N$-tuple of strategies, $\tau\in\{1,\ldots,K\}$. We will assume a particular ordering for this set after Proposition~\ref{proposition_A_full_rank}.

%For each pure strategy $y^{\tau}\in Y$, $\tau=1,\ldots,K$ let $u^{\tau}$ denote the pure-strategy potential associated with playing $y^\tau$; that is, $u^\tau := u(y^\tau)$, where $u$ is the pure form of the potential function defined in Section \ref{sec_prelims}. A vector of \emph{potential coefficients} $u=(u^\tau)_{\tau=1}^K$ is an element of $\R^K$.
%%\footnote{More precisely, $u$ is a vector of \emph{potential function coefficients}. In an abuse of terminology, we generally refer to it simply as a vector of utility coefficients.}

%In an identical interest game, there exists a function $u:Y\to \R$ such that $u_i = u$ for all $i=1,\ldots,N$. We will refer to this common payoff function (both the pure and mixed versions) as the potential function.

In an identical-payoffs game there exists a single function $u:Y\to\R$ such that $u_i = u$ for all $i=1,\ldots,N$. If we consider the vector of utility coefficients $u=(u(y))_{y\in Y}\in R^K$ as a variable, then by \eqref{eq_potential_expanded_form2}, $U$ is linear in $u$.\footnote{The potential function $U$ is, of course, a function of both $x$ and $u$. However, since we will only exploit the dependence on $u$ in this section, we generally stick to the standard game-theoretic convention of writing $U$ as a function of $x$ only \cite{fudenberg1991game}.} At this point we will express $U$ in a more convenient form (see \eqref{def_potential_fun}) which more clearly exposes this relationship.

Let $\tau\in\{1,\ldots,K\}$, $i\in\{1,\ldots,N\}$ and $x_i\in X_i$. We define $q_i^\tau:X_i\rightarrow [0,1]$ by
\begin{equation} \label{def_q_function}
q^\tau_i(x_i) := T_i^k(x_i)
\end{equation}
where $k$ corresponds to the action played by player $i$ in the tuple $y^\tau$, i.e, $(y^\tau)_i = y_i^k$, and where $T_i^k(x_i)$ is defined in \eqref{def_T}. In words, given some mixed strategy $x_i\in X_i$, $q_i^\tau(x_i)$ gives the weight $x_i$ places on the particular action used by player $i$ in the $\tau$-th action tuple $y^\tau$.

In an abuse of notation, given a pure strategy $y_i^k\in Y_i$, we let $q_i^\tau(y_i^k) = 1$ if $(y^\tau)_i = y_i^k$ and $q_i^\tau(y_i^k) = 0$ otherwise.

Given a fixed vector of utility coefficients $u\in\R^K$, the utility function $U:X\rightarrow\R$ may be expressed as
%(see \eqref{def_potential_fun1} and \eqref{def_q_function})
\begin{equation}\label{def_potential_fun}
U(x) = \sum_{\tau=1}^K u^\tau \left[\prod_{i=1}^N q_i^\tau(x_i)\right].
\end{equation}
Note that this form makes it clear that $U$ is linear in $u$.

Now, let $C=C_1\cup\cdots\cup C_N$ be some carrier set. The analysis through the remainder of the section will rely on this carrier set being fixed, and many of the subsequent terms are implicitly dependent on the choice of $C$.
%Suppose $x=(x_1,\ldots,x_N)\in X$ is a strategy with carrier $C^*$.
%Without loss of generality assume that $Y_i$ is ordered so that $y_i^{1}$ is in the carrier $C_i$ of player $i$.
In keeping with our prior convention we let $\gamma_i := |C_i|$, and let $\tilde{N}:=|\{i\in\{1,\ldots,N\}:\gamma_i\geq 2\}|$, and without loss of generality we assume that the strategies in $Y_i$ are ordered so that $C_i = \{y_i^1,\ldots,y_i^{\gamma_i}\}$.
%denote the number of players using mixed strategies under $C$.

Any $x_i$ with carrier $C_i$ has precisely $\gamma_i-1$ free components (i.e., not constrained to zero by $C_i$).
The joint strategy
$x = (x_1,\ldots,x_N)$ is a vector with
\begin{equation} \label{eq_gamma_def}
\gamma := \sum_{i=1}^N (\gamma_i-1)
\end{equation}
free components.

%Recall from \eqref{eq_potential_expanded_form2} that we may express $U(x)$ as
%$$
%U(x) = \sum_{k=1}^{K_i-1}x_i^k U(y_i^{k+1},x_{-i}) + (1-\sum_{k=1}^{K_i-1}x_i^k)U(y_i^{1},x_{-i}).
%$$
By \eqref{eq_potential_expanded_form2} we have that
\begin{equation}\label{def_F}
\frac{\partial U(x)}{\partial x_i^k} = U(y_i^{k+1},x_{-i}) - U(y_i^{1},x_{-i})=:F_i^k(x,u)
\end{equation}
%Let
%\begin{equation}
%F_i^k(x,u) := U(y_i^{k+1},x_{-i}) - U(y_i^1,x_{-i})
%%= \frac{\partial U(x)}{\partial x_i^k}
%\end{equation}
for $i=1,\ldots,\tilde{N},~k=1,\ldots,\gamma_i-1$.
Let
%\footnote{Note that $F_i^k$ and $F$ defined here are identical to \eqref{F_tilde_def}--\eqref{F_tilde_def2}, }
\begin{equation}\label{def2_F}
F(x,u) := \left( F_i^k(x,u) \right)_{\substack{i=1,\ldots,\tilde{N}\\ k=1,\ldots,\gamma_i-1}}.
\end{equation}
%\begin{equation}\label{def2_F}
%F(x,u) := \left(F_1^1(x,u),\ldots,F_1^{\gamma_1-1}(x,u),\ldots,F_i^k(x,u),\ldots,F_{\tilde{N}}^1(x,u),\ldots,F_{\tilde{N}}^{\gamma_{\tilde{N}}-1}(x,u)\right)
%\end{equation}
%Consider now the decomposition $x=(x_m,x_p)$.
Given an $x\in X$, it is at times useful to decompose it as $x = (x_p,x_m)$, where $x_m = (x_i^k)_{i=1,\ldots,\tilde{N},~ k=1,\ldots,\gamma_i-1}$ and $x_p$ contains the remaining components of $x$. (The subscript of $x_m$ is indicative of ``mixed strategy components'' of $x$ and $x_p$ indicative of ``pure strategy components'' of $x$.) In this decomposition, $x_m$ is a $\gamma$-dimensional vector containing the free components of $x$.
Taking the Jacobian of $F$ in terms of the components of $x_m$ we find that
\begin{equation} \label{eq_deriv_F_equals_hessian}
D_{x_m} F(x_p,x_m,u) = \tilde{\vH}(x),
\end{equation}
where $D_{x_m} F(x_p,x_m,u) = \left(\frac{\partial F}{\partial x_i^\ell}\right)_{\substack{i=1,\ldots,{\tilde{N}},\\ \ell=1,\ldots,\gamma_i-1}}$.

Let $x^*$ be a mixed equilibrium with carrier $C$. Differentiating \eqref{eq_potential_expanded_form2} we see that at the equilibrium $x^*$ we have $\frac{\partial U(x^*)}{\partial x_i^k} = 0$,
% for $i=1,\ldots,\tilde{N}$, $k=1,\ldots,\gamma_i-1$,
%(see Lemma \ref{lemma_carrier_vs_gradient} in appendix),
or equivalently,
\begin{equation} \label{eq_equilibrium_equation}
F_i^k(x^*,u) = U(y_i^{k+1},x^*_{-i}) - U(y_i^{1},x^*_{-i}) = 0
\end{equation}
for $i=1,\ldots,\tilde{N},~ k=1,\ldots,\gamma_i-1$.
Using \eqref{def_potential_fun} in the above we get
\begin{align}\label{eq_F_equality}
F_i^k(x^*,u)
%& = U(y_i^k,x^*_{-i}) - U(y_i^{K_i},x^*_{-i})%& = \sum_{m=1}^K q^m_i(e_i^k)\prod_{j\not = i} q^m(x_{j}) - \sum_{m=1}^K q^m_i(e_i^1)\prod_{j\not = i} q^m(x_{j})\\
 = \sum_{\tau=1}^K u^\tau \left[\left(q^\tau_i(\actionikone)- q^\tau_i(\actionione)\right)\prod_{j\not = i} q^\tau_j(x^*_{j})\right] =0.
\end{align}

Note that \eqref{eq_F_equality} is a linear in $u$. We would like to express \eqref{eq_F_equality} as a matrix equation (i.e., $\vA(x)u = 0$ for some matrix $\vA(x)$ dependent only on $x$).
With this in mind, it will be convenient to develop notation relating the ordering of the pure-strategy set $Y$ with the ordering of $(F_i^k)_{i=1,\ldots,\tilde{N},~k=1,\ldots,\gamma_i-1}$. Given $i\in\{1,\ldots,\tilde{N}\}$, $k\in\{1,\ldots,\gamma_i-1\}$, let
$$
s^*(i,k) :=
\begin{cases}
k & \mbox{ for }  ~ i=1,\\
\sum_{j=1}^{i-1} (\gamma_j-1) + k & \mbox{ for }  ~ i\geq 2.
\end{cases}
$$
%Given any $s$, there is a unique pair $(i,k)$, $i=1,\ldots,\tilde{N}$, $k=1,\ldots,\gamma_i-1$ such that $s=s^*(i,k)$. Thus, given $s=1,\ldots,\gamma$, we may also define
%\begin{align*}
%i^*(s) = i,\quad \mbox{ for the unique }~ i~ \mbox{ s.t. } s^*(i,k) = s \mbox{ for some }~ k,\\
%k^*(s) = k, \quad \mbox{ for the unique } ~k~ \mbox{ s.t. } s^*(i,k) = s \mbox{ for some }~ i.
%\end{align*}
Define $i^*:\{1,\ldots,\gamma\}\to \{1,\ldots,\tilde{N}\}$ and $k^*:\{1,\ldots,\gamma\}\to \{1,\ldots,\max_i\{\gamma_i-1\}\}$ to be the inverse of $s^*$; that is
\begin{equation} \label{def_k_i_star}
s^*(i^*(s),k^*(s)) = s
\end{equation}
for all $s=1,\ldots,\gamma$.

The above notation may be understood as follows: Effectively, we have stacked $(F_i^k)_{i=1,\ldots,\tilde{N},~k=1,\ldots,\gamma_i-1}$ into a single vector. Given a pair $(i,k)$, the function $s(i,k)\in\{1,\ldots,\gamma\}$ gives the corresponding index in this vector. Conversely, given some $s\in\{1,\ldots,\gamma\}$, the functions $i^*(s)$ and $k^*(s)$ simply return the player $i$ or action $k$ corresponding to entry $s$ in the vector.

Given an $x\in X$, let $\vA(x) = \left(a_{s,\tau}(x)\right)_{\substack{s = 1,\ldots,\gamma,\\ \tau=1,\ldots,K}}\in\mbb{R}^{\gamma\times K}$ be defined as the matrix with entries
\begin{equation}\label{def_A}
a_{s,\tau}(x) := \left(q^\tau_{i^*(s)}(y_i^{k^*(s)+1})- q^\tau_{i^*(s)}(\actionione)\right)\prod_{j\not = i^*(s)} q^\tau_j(x_{j}),
\end{equation}
%with $i = i^*(s)$ and $k=k^*(s)$.
Reexpressing \eqref{eq_F_equality} using this notation, we see that for any equilibrium $x^*$ with carrier $C$ we have
\begin{equation} \label{eq_A_equals_0}
\vA(x^*)u = 0.
\end{equation}

\section{General Games: Discussion and Proof Techniques} \label{sec:general-games}
In this section we take a short digression to discuss the proof that almost all general games are regular, and contrast this with the potential games case.
%regularity in general $N$-player games and contrast this with the potential games case.
%We will focus on contrasting the proof techniques in each case.
We note that this section may be skipped without loss of continuity.

In Section~\ref{sec:def-regularity-standard} we recall the traditional definition of a regular equilibrium. In Section~\ref{sec:gen-games-proof-strategy} we discuss the strategy used by Harsanyi \cite{harsanyi1973oddness} to prove that almost all general games are regular. In Section~\ref{sec:pot-game-proof-technique} we discuss the technical challenges arising in the case of potential games and contrast our proof techniques with those used by Harsanyi.

\subsection{Regular Equilibria in General Games} \label{sec:def-regularity-standard}
We now recall the traditional definition a regular equilibrium as given in \cite{van1981regular,van1991stability}. Let the game size $(N,(K_i)_{i=1}^N)$ be fixed.

As discussed in Section~\ref{sec_almost_all}, a game is uniquely defined by a vector $u\in \R^{NK}$ (which we refer to as the utility coefficient vector) specifying the pure-strategy utility received by each player.
%The vector $u$ uniquely defines the expected utility $U_i$ for each player $i$.

Given a strategy $x\in X$ and vector of utility coefficients $u\in\R^{NK}$, let
\begin{align} \label{F_tilde_def}
\tilde F_i^k(x,u) & := T_i^1(x_i)x_i^k [ U_i(y_i^{k+1},x_{-i}) - U_i(y_i^1,x_{-i})]
%\nonumber & = T_i^1(x_i)x_i^k F_i^k(x,u),
\end{align}
for $i=1,\ldots,N$, $k=1,\ldots,K_i-1$,
%where $F_i^k(x,u)$ is defined as in \eqref{def_F}.
and let
\begin{equation} \label{F_tilde_def2}
\tilde F(x,u) := \left( \tilde F_i^k(x,u) \right)_{\substack{i=1,\ldots,N\\ k=1,\ldots,K_i-1}}.
\end{equation}

\begin{definition} [Regular Equilibrium] \label{def_regular_equilib}
Let $x^*\in X$ be an equilibrium of a game with utility coefficient vector $u$.
% of a potential game with potential coefficient vector $u$.
Assume the action set  $Y_i$ of each player is reordered so that $y_i^1 \in \carr_i(x_i^*)$. The equilibrium $x^*$ is said to be \emph{regular} if the Jacobian of $\tilde F(x^*,u)$, given by $D_{x} \tilde F(x^*,u)$, is non-singular.
\end{definition}

\begin{remark}
We note that if $x^*$ is regular, then the Jacobian of $\tilde F(x^*,u)$ can be shown to be nonsingular under any reordering of $Y_i$  in which the reference action satisfies $y_i^1 \in \carr_i(x_i^*)$ for all $i=1,\ldots,N$ (see \cite{van1981regular}, Theorem 3.8). This justifies the use of an arbitrary reference action $y_i^1\in\carr_i(x_i^*)$ in the above definition.
\end{remark}

\begin{remark} \label{remark_reg_X_v_Delta}
The notion of a regular equilibrium is traditionally defined by considering mixed strategies directly in the probability simplex rather than $X_i$ \cite{van1991stability}. Using the definition of $\tilde F_i^k$ and the properties of the determinant of a matrix, it is readily confirmed that the definition of regularity given in Definition~\ref{def_regular_equilib} coincides with the traditional definition in \cite{van1991stability}.
\end{remark}

\subsection{Almost All General Games are Regular: Proof Strategy} \label{sec:gen-games-proof-strategy}
We will now review, at a high level, the classical technique for proving that almost all general games are regular given in \cite{harsanyi1973oddness}.

The strategy is as follows.
\begin{enumerate}
  \item Suppose that a carrier set $C$ is fixed.
  \item Let $u\in \R^{NK}$ denote a vector of utility function coefficients, and let $u$ be broken down as $u = (u^*,u^{**})$, where $u^*\in \R^\gamma$, $u^{**}\in \R^{NK-\gamma}$.
  \item Construct a $C^1$ function $\rho^{**}:\rOmega_C\times \R^{NK-\gamma}\to\R^{\gamma}$ such that, given any equilibrium $x^*$ with carrier $C$ and a partial vector of utility coefficients $u^{**}\in \R^{NK-\gamma}$, we have\footnote{The notation $\rho^{**}$, $\rho^*$, $u^*$ and $u^{**}$ is used here to be consistent with the usage in \cite{harsanyi1973oddness}.}
      $$
      u^* = \rho^{**}(x^*,u^{**}).
      $$
      That is, $\rho^{**}$ allows one to recover $u^*$ given $x^*$ and $u^{**}$.
      Given $\rho^{**}$, it is trivial to construct a $C^1$ function $\rho^{*}:\rOmega_C\times \R^{NK-\gamma}\to\R^{NK}$ such that
      $$
      u = \rho^*(x^*,u^{**}),
      $$
      so that $\rho^*$ recovers the full vector $u$ given $x^*$ and $u^{**}$.
  \item Show that the set of all irregular games having an equilibrium with carrier $C$ lies in the ``critical values'' set of $\rho^*$ (i.e, the image of the set of critical points). By Sard's theorem, this implies that all such games lie in an $\calL^{NK}$-measure zero set.
  \item Since the argument holds for arbitrary carrier $C$ and there are a finite number of possible carriers, this completes the proof.
\end{enumerate}

The crucial task in the above proof outline is the construction of the function $\rho^{**}$ which isolates the set of irregular games in a critical values set. For the sake of comparison with the identical payoff games case, we will now review the technique for constructing $\rho^{**}$ in \cite{harsanyi1973oddness} more detail.
%As this is the key part of the proof, we will review it in slightly more detail.  %so we may contrast with the techniques required to prove the analogous result for potential games.

Suppose a carrier $C$ is fixed, and suppose $x^*$  is an equilibrium with carrier $C$. Differentiating (using the same reasoning used to arrive at \eqref{eq_equilibrium_equation}) we see that at $x^*$ we have
\begin{equation} \label{eq_gen_eqilib_condition}
U_i(y_i^{k+1},x_{-i}^*) - U_i(y_i^1,x_{-i}^*) = 0
\end{equation}
for $i=1,\ldots,\tilde N$, $k=1,\ldots,\gamma_i-1$. Let $u=(u_1,\ldots,u_N)\in\R^{NK}$ represent the vector of all player's pure-strategy utilities.
As with \eqref{eq_equilibrium_equation}, the equality \eqref{eq_gen_eqilib_condition} is linear in $u$ and may be expressed as a matrix equation\footnote{To be consistent with the proof in the potential games case, we have modified the presentation of Harsanyi's technique to accommodate matrix notation. However, modulo notational differences, the argument discussed here is the same as \cite{harsanyi1973oddness}.}
\begin{equation} \label{eq_B_equals_zero}
\vB(x)u = 0,
\end{equation}
where $\vB(x)\in \R^{\gamma\times NK}$. The matrix $\vB(x)$ has particularly convenient structure. To elucidate this structure, first note that, as with \eqref{def_potential_fun}, the (expected) utility of player $i$ in a general game may be expressed as
$$
U_i(x) = \sum_{\tau=1}^K u_i^\tau \left[\prod_{i=1}^N q_i^\tau(x_i)\right].
$$
Using this form, the equality \eqref{eq_gen_eqilib_condition} is expressed as
\begin{equation} \label{eq_row-sum-general}
\sum_{\tau=1}^K u_i^\tau \left[\left(q^\tau_i(\actionikone)- q^\tau_i(\actionione)\right)\prod_{j\not = i} q^\tau_j(x^*_{j})\right] =0,
\end{equation}
for $i=1,\ldots,\tilde N$, $k=1,\ldots,\gamma_i-1$.
%Let $s^*(i,k)$ be as defined in ?? and
For each $i\in \{1,\ldots,\tilde N\}$, let $\vB_i(x) = (b_{i,k,\tau}(x))_{\substack{k=1,\ldots,\gamma_i-1\\\tau=1,\ldots,K}}$ be the matrix with entries
\begin{equation} \label{def_B}
b_{i,k,\tau}(x) := \left(q^\tau_{i}(y_i^{k+1})- q^\tau_{i}(\actionione)\right)\prod_{j\not = i} q^\tau_j(x_{j}).
\end{equation}
The equality \eqref{eq_row-sum-general} may now be expressed in the form of \eqref{eq_B_equals_zero} with $\vB(x)$ given by
$$\vB(x) :=
\begin{pmatrix}
\vB_1(x) & \vzero & \cdots & \vzero\\
\vzero & \vB_2(x)  & \cdots & \vzero\\
\vdots & \ddots & ~ & ~ \vdots \\
\vzero & \cdots & \vzero & \vB_{\tilde N}(x)
\end{pmatrix}.
$$
The important fact about $\vB(x)$ is that there exist $\gamma$ columns of $\vB(x)$ so that these same columns form an invertible diagonal $\gamma\times \gamma$ submatrix of $\vB(x)$, for all $x$ with carrier $C$. (More details can be found in \ref{Appendix-general-extras}.)
Thus, without loss of generality, we may reorder the set of pure strategies so that the first $\gamma$ columns of $\vB(x)$ are linearly independent and we may write $\vB(x) = [\vC(x)~ \vD(x)]$, where $\vC(x)\in \R^{\gamma\times \gamma}$ is invertible for any $x$ with carrier $C$.

Given a vector $u^{**}\in \R^{NK-\gamma}$ and an equilibrium $x^*$ with carrier $C$, define
\begin{equation} \label{eq_rho-general-construction}
\rho^{**}(x,u^{**}) := \vC(x)^{-1}\vD(x)u^{**}.
\end{equation}
Finally, recalling \eqref{eq_B_equals_zero}, we observe that $\rho^{**}$ allows one to recover $u^*$ given $u^{**}$ and any equilibrium $x^*$ with support $C$.

In the following section we will discuss the strategy for constructing an analogous function in the case of identical payoff games, and the challenges that arise in this case.

\subsection{Comparison with Proof Technique in Identical-Payoff Games} \label{sec:pot-game-proof-technique}
We will now contrast the proof that almost all general games are regular with the proof that almost all identical payoff games are regular.

The main substantive difference between the proof of Proposition~\ref{lemma_second_order_degeneracy} and proof of generic regularity in the case of general games is the construction the functions isolating the subset of irregular games (Step 3 in Section~\ref{sec_roadmap} and Step 3 in Section~\ref{sec:gen-games-proof-strategy}).
We will now briefly discuss the manner in which such a function is constructed in the identical-payoffs-game case and contrast this with the approach discussed in Section~\ref{sec:gen-games-proof-strategy}.

In the case of an identical-payoff game, at an equilibrium $x^*$ with carrier $C$ we have the key relationship \eqref{eq_A_equals_0} rather than \eqref{eq_B_equals_zero}. The matrices $\vA(x)$ and $\vB(x)$ are related via
\begin{equation} \label{eq_A_B_relation}
\vA(x) =
\begin{pmatrix}
\vB_1(x)\\
\vdots\\
\vB_N(x)
\end{pmatrix}.
\end{equation}
Suppose we fix some carrier $C$, and let $\gamma$ be as defined in \eqref{eq_gamma_def}.
Given a vector of potential function coefficients $u\in\R^{K}$, decompose it as $u = (u_1,u_2)$, where $u_1\in \R^{K-\gamma}$, $u_2\in\R^{\gamma}$.\footnote{In Section~\ref{sec:gen-games-proof-strategy} we used $u^*$ and $u^{**}$ to refer to an analogous breakdown of $u\in\R^{NK}$; this notation was preferred there to avoid confusion with individual player's utility functions.}

Analogous to the general games case, if we can find a function that recovers $u_2$ given $u_1$ and an equilibrium $x^*$ with carrier $C$, then the set of second-order degenerate potential games will lie in the critical values set of this map (see Lemma~\ref{lemma_critical_vals}). %(Due to the relationship \eqref{eq_A_B_relation} and the linear nature of both \eqref{eq_B_equals_zero} and \eqref{eq_A_equals_0}, this will follow from the same reasoning used to show that the set of irregular games lie in the critical values set of $\rho^{**}$ in \cite{harsanyi1973oddness}. See Lemma \ref{lemma_critical_vals} below for more details.)
%in Section \ref{sec:gen-games-proof-strategy}.)
Thus, the key step in the proof of Proposition~\ref{lemma_second_order_degeneracy} is the construction of such a function. %$\rho$.

In the case of general $N$-player games we were conveniently able to choose $\gamma$ columns of $\vB(x)$ that formed a $\gamma\times \gamma$ diagonal submatrix of $\vB(x)$ that was invertible for all $x$ with carrier $C$. Using this diagonal submatrix we were able to construct an appropriate function $\rho^{**}$ in \eqref{eq_rho-general-construction}.

In the case of identical payoff games, we can (and will) follow a similar approach. If $\vA(x)$ can be decomposed as $\vA(x) = [\vE(x) ~ \vF(x)]$, where $\vE(x) \in\R^{\gamma\times \gamma}$ is invertible, then (as in \eqref{eq_rho-general-construction}) the function $\rho(x,u_1) = \vE(x)^{-1} \vF(x)u_1$ will recover $u_2$.
However, in the case of identical payoff games, the problem of finding an invertible submatrix of $\vA(x)$ is more challenging than the general games case. Unlike $\vB(x)$, there do not exist $\gamma$ columns of $\vA(x)$ that form a invertible diagonal submatrix of $\vA(x)$. Moreover, the pure strategies (corresponding to columns of $\vB(x)$) used to construct Harsanyi's map $\rho^{**}$ need not correspond to linearly independent columns of $\vA(x)$.

%In the case of identical payoff games, no such diagonal matrix can be formed. In particular, the matrix $\vA(x)$ does not have $\gamma$ columns which form a diagonal submatrix. Thus, the selection of $\gamma$ linearly independent columns is more challenging. In particular, the pure strategies (corresponding to columns of $\vB(x)$) used to construct Harsanyi's map $\rho^{**}$ may not correspond to linearly independent columns of $\vA(x)$.

%the primary difficulty in recovering $u_2$ from $x$ and $u_1$ using the technique discussed in Section \ref{sec:gen-games-proof-strategy} is that the matrix $\vA(x)$ does not possess $\gamma$ columns such that these same columns are linearly independent for all $x$ with carrier $C$.
For example, suppose $\Gamma$ is a $2\times 2$ game and let $C$ be the carrier containing all pure strategies (so any $x$ with $\carr(x)=C$ is completely mixed). The matrix $\vA(x)$ takes the form
\begin{equation} \label{eq_A_example}
\vA(x) =
\begin{pmatrix}
-x_2 & -x_2 & (1-x_2) & (1-x_2)\\
-x_1  & (1-x_1) & -x_1 & (1-x_1)
\end{pmatrix}.
\end{equation}
(See \ref{Appendix-A-full-rank} and, in particular, \eqref{eq_alpha_q_relationship}--\eqref{def_p_partition} for an in-depth characterization of the structure of $\vA(x)$.)
%The matrix $\vB(x)$ takes the form
%\begin{equation} \label{eq_B_example}
%\vB(x) =
%\begin{pmatrix}
%-x_2 & -x_2 & (1-x_2) & (1-x_2) & 0 & 0 & 0 & 0\\
%0 & 0 & 0 & 0 & -x_1  & (1-x_1) & -x_1 & (1-x_1)
%\end{pmatrix}.
%\end{equation}
%The mapping used by Harsanyi in \cite{harsanyi1973oddness} selects the columns of $\vB(x)$ corresponding to the pure strategies $(y_1^1,y_2^2)$ and $(y_1^2,y_2^1)$. In \eqref{eq_B_example} this corresponds to the second and forth columns.
In \eqref{eq_A_example} these pure-strategy choices correspond to the middle two columns of $\vA(x)$, which are linearly dependent for $x_1=x_2=1/2$.
In order to ensure that we may select $\gamma$ columns of $\vA(x)$ that are linearly independent, we will show that for any $x$ with $\carr(x) \subseteq C$, the matrix $\vA(x)$ has full rank (see Proposition~\ref{proposition_A_full_rank}). This will then allow us to construct a set of functions as in Step 3 of Section~\ref{sec_roadmap} (see also \eqref{eq_rho_def1}) that recover $u_2$ given $u_1$ and $x$ in some subset of the strategy space.

It should be noted that, for the sake of proving Proposition~\ref{lemma_second_order_degeneracy}, it would be sufficient to prove the forthcoming Proposition~\ref{proposition_A_full_rank} under the weaker condition $\carr(x)= C$ (rather than $\carr(x)\subseteq C$). However, we will prove Proposition~\ref{proposition_A_full_rank} under the stronger condition that $\carr(x) \subseteq C$ in order to apply the result later in the proof of Proposition~\ref{lemma_first_order_degeneracy}.

\section{Proof of Proposition~\ref{lemma_second_order_degeneracy}} \label{sec:proof-2nd-order-degen}
We will now prove Proposition~\ref{lemma_second_order_degeneracy}, which states that almost all identical payoff games are second-order non-degenerate.

We begin with the following proposition that establishes the solvability of \eqref{eq_A_equals_0}.
%and will permit us to construct a family of functions $(\rho_\ell)_\ell$ as outlined in Section \ref{sec_roadmap}. We will proceed with this construction after pro
%We note that, in order to prove Proposition \ref{lemma_second_order_degeneracy} it is sufficient to consider only $x$ such that $\carr(x) = C$ in the following proposition. However, later, when studying first order degenerate games in Section \ref{sec_first_order_degenerate_games}, we will consider the notion of an ``extended carrier set'' (see \eqref{def_ext_carr}), and we will need to characterize the rank of $\vA(x)$ for $x$ with $\carr(x) \subset C$.
\begin{proposition} \label{proposition_A_full_rank}
For any $x$ such that $\carr(x) \subseteq C$, the matrix $\vA(x)$ has full row rank.
%For any $x$ with $\supp(x)\subseteq C$ the matrix $\vA(x)$ has full row rank.
\end{proposition}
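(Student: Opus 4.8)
The plan is to show that the $\gamma$ rows of $\vA(x)$ are linearly independent. Write $\sigma_j:=T_j(x_j)\in\Delta_j$, so that by \eqref{def_q_function} we have $q_j^\tau(x_j)=\sigma_j\big((y^\tau)_j\big)$ for every column index $\tau$, and by \eqref{def_A} the $\tau$-th entry of the row of $\vA(x)$ indexed by the pair $(i,k)$, $1\le i\le\tilde{N}$, $1\le k\le\gamma_i-1$, equals $+\prod_{j\neq i}\sigma_j\big((y^\tau)_j\big)$ if $(y^\tau)_i=y_i^{k+1}$, equals $-\prod_{j\neq i}\sigma_j\big((y^\tau)_j\big)$ if $(y^\tau)_i=y_i^1$, and equals $0$ otherwise. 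Suppose some $c=\big(c_{(i,k)}\big)$ satisfies $c^\top\vA(x)=0$. For each $i\le\tilde{N}$ define $d_i\colon Y_i\to\R$ by $d_i(y_i^{k+1})=c_{(i,k)}$ for $k=1,\dots,\gamma_i-1$, $d_i(y_i^1)=-\sum_{k=1}^{\gamma_i-1}c_{(i,k)}$, and $d_i(y_i^m)=0$ for $m>\gamma_i$; then $\spt(d_i)\subseteq C_i$ and $\sum_{m=1}^{K_i}d_i(y_i^m)=0$, and $c=0$ if and only if $d_i\equiv 0$ for every $i\le\tilde{N}$. Grouping the terms of $c^\top\vA(x)$, for each fixed $\tau$, according to the player index $i$, the hypothesis $c^\top\vA(x)=0$ is exactly the system of identities
\begin{equation}\label{eq_rowrank_identity}
\sum_{i=1}^{\tilde{N}} d_i\big((y^\tau)_i\big)\,\prod_{j\neq i}\sigma_j\big((y^\tau)_j\big)=0\qquad\text{for every }\tau\in\{1,\dots,K\}.
\end{equation}
(If $\tilde{N}=0$ the matrix is empty and there is nothing to prove.) It remains to deduce $d_i\equiv 0$ for all $i\le\tilde{N}$ from \eqref{eq_rowrank_identity}.

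The first step is to show that each $d_i$ is supported on the carrier $S_i:=\carr_i(x_i)=\spt(\sigma_i)$ of $x_i$. Every $S_j$ is nonempty, and since $\carr(x)\subseteq C$ it lies in $C_j$ (and $S_j=\{y_j^1\}$ whenever $\gamma_j=1$). Fix $i_0\le\tilde{N}$; there is nothing to prove unless $C_{i_0}\setminus S_{i_0}$ is nonempty, so pick $y_{i_0}^m\in C_{i_0}\setminus S_{i_0}$ and evaluate \eqref{eq_rowrank_identity} at the column $\tau$ whose $i_0$-th coordinate is $y_{i_0}^m$ and whose other coordinates are chosen arbitrarily inside the respective carriers $S_j$. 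For $i\neq i_0$ the product $\prod_{j\neq i}\sigma_j\big((y^\tau)_j\big)$ contains the factor $\sigma_{i_0}(y_{i_0}^m)=0$, so all terms with $i\neq i_0$ vanish and \eqref{eq_rowrank_identity} reduces to $d_{i_0}(y_{i_0}^m)\prod_{j\neq i_0}\sigma_j\big((y^\tau)_j\big)=0$; since the product is strictly positive, $d_{i_0}(y_{i_0}^m)=0$. Combined with $\spt(d_{i_0})\subseteq C_{i_0}$ this gives $\spt(d_{i_0})\subseteq S_{i_0}$.

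The second step uses the columns $\tau$ for which every coordinate of $y^\tau$ lies in the corresponding carrier. For such $\tau$ every factor $\sigma_j\big((y^\tau)_j\big)$ is positive, so dividing \eqref{eq_rowrank_identity} by $\prod_{j=1}^N\sigma_j\big((y^\tau)_j\big)$ gives
\[
\sum_{i=1}^{\tilde{N}}\frac{d_i\big((y^\tau)_i\big)}{\sigma_i\big((y^\tau)_i\big)}=0
\]
for all choices $(y^\tau)_i\in S_i$, $i=1,\dots,\tilde{N}$. A sum of functions of separate variables that vanishes identically must have each summand constant, so there are constants $\kappa_i$ with $d_i(y)=\kappa_i\sigma_i(y)$ for $y\in S_i$ and $\sum_{i=1}^{\tilde{N}}\kappa_i=0$. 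Since $d_i$ vanishes off $S_i$ by the first step,
\[
0=\sum_{m=1}^{K_i}d_i(y_i^m)=\kappa_i\sum_{y\in S_i}\sigma_i(y)=\kappa_i,
\]
so $\kappa_i=0$, $d_i\equiv 0$ for every $i\le\tilde{N}$, and therefore $c_{(i,k)}=d_i(y_i^{k+1})=0$ for all $(i,k)$. This proves that $\vA(x)$ has full row rank.

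I expect the crux to be the first step. The separation-of-variables argument of the second step only pins down the $d_i$ on the carriers $S_i$, so the probe-column trick is what removes the components of $d_i$ on $C_i\setminus S_i$; this is the one place the hypothesis $\carr(x)\subseteq C$ is used (via $S_j\neq\emptyset$ and, for $\gamma_j=1$, the forced coordinate $y_j^1$). The remaining care is bookkeeping: noting that $C$ affects $\vA(x)$ only through the index set of its rows and through the zero-sum constraint $\sum_m d_i(y_i^m)=0$ built into the definition of $d_i$, and translating $c^\top\vA(x)=0$ into \eqref{eq_rowrank_identity} while keeping the distinct roles of the reference action $y_i^1$ and of $y_i^{k+1}$ straight. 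One caveat: the introduction indicates the authors route this rank statement through signsolvability of matrices, so if the direct argument above misses a degenerate carrier configuration one would fall back on a sign-pattern ($L$-matrix) analysis of the relevant square submatrix.
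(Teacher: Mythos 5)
Your proof is correct, and it takes a genuinely different route from the paper's. The paper proves the stronger, purely combinatorial statement that the square-free sign pattern of the submatrix $\vA_1(x)$ already forces full row rank: it invokes the $L$-matrix characterization from the sign-solvability literature (Lemma~\ref{lemma_L_matrix_condition}) and runs a case analysis over nonzero diagonal sign matrices $\vD$, exhibiting in each case a column of $\vD(\vR_1\circ\sgn(\vP_1(x)))$ that is nonzero with all nonzero entries of one sign. You instead argue directly: a vanishing left-combination $c^\top\vA(x)=0$ is encoded as zero-sum functions $d_i$ on $Y_i$ supported in $C_i$, the ``probe columns'' (one coordinate in $C_{i_0}\setminus\carr_{i_0}(x_{i_0})$, the rest inside the carriers) localize each $d_i$ to $\carr_i(x_i)$, and then the separation-of-variables identity on the all-carrier columns together with $\sum_m d_i(y_i^m)=0$ forces $d_i\equiv 0$. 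I checked the two places where this could break --- the translation of $c^\top\vA(x)=0$ into the identity \eqref{eq_rowrank_identity} (the grouping by player is exactly right, since for fixed $\tau$ the row $(i,k)$ contributes only when $(y^\tau)_i\in\{y_i^1,y_i^{k+1}\}$) and the degenerate carrier configurations ($\tilde N=0$, $|S_i|=1$, $y_i^1\notin S_i$) --- and all go through; your fallback caveat is not needed. Your argument is more elementary and self-contained (no sign-pattern machinery), and it in fact only uses columns indexed by action tuples in $C$, so it also yields full row rank of $\vA_1(x)$. What it does not yield is the paper's extra robustness that invertibility depends only on $\sgn(\vA_1(x))$; however, nothing else in the paper uses that extra strength --- both applications (constructing the maps $\rho_\ell$ in Section~\ref{sec_second_order_degenerate_games} and choosing Lipschitz bases for $\range\vA(x)^T$ in Section~\ref{sec_first_order_degenerate_games}) need only the rank conclusion --- so your proof could be substituted without loss.
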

Proposition~\ref{proposition_A_full_rank} together with \eqref{eq_A_equals_0} will permit us to construct a family of functions $(\rho_\ell)_{\ell\geq 1}$ as outlined in Section~\ref{sec_roadmap}.
%We will proceed with the construction of these functions after proving Proposition \ref{proposition_A_full_rank}.
The complete proof of Proposition~\ref{proposition_A_full_rank} can be found in \ref{Appendix-A-full-rank}.

In broad strokes, the proof of Proposition~\ref{proposition_A_full_rank} proceeds as follows: We will consider the ``sign pattern'' of the matrix $\vA(x)$ (i.e., the matrix $\sgn(\vA(x))$, with $\sgn(\cdot)$ defined in \ref{Appendix-math-notation}).
%If $x$ and $x'$ are two strategies with $\carr(x) = \carr(x')$, then $\sgn(\vA(x)) = \sgn(\vA(x'))$. Thus, for $x\in $\sgn(\vA(x))$
A matrix is called a \emph{sign-pattern matrix} if its entries take on values from the set $\{-1,0,1\}$. The properties of sign-pattern matrices have been well studied in the literature \cite{MR1273974,MR743051}. Of relevance here, a sign pattern matrix $\vM$ is called an \emph{$L$-matrix} if every matrix with the same sign pattern as $\vM$ has full rank. A relatively simple characterization of $L$-matrices is given in \cite{MR1273974,MR743051}. Using this characterization, we prove that for any $x$ with $\carr(x) \subseteq C$, $\sgn(\vA(x))$ is an $L$-matrix. This implies the desired result.

Please see \ref{Appendix-A-full-rank} for complete details of the proof of Proposition~\ref{proposition_A_full_rank}.

We will now use Proposition~\ref{proposition_A_full_rank} to construct a countable cover $(B_\ell)_{\ell\geq 1}$ of the set $\{x:\carr(x) =C\}$ and a family of functions $(\rho_\ell)_{\ell\geq 1}$ as outlined in Section~\ref{sec_roadmap}.

Given the carrier $C$ there are $\binom{K}{\gamma}$ possible combinations (of size $\gamma$) of the columns of $\vA(x)$. For each $r=1,\ldots,\binom{K}{\gamma}$, let $\vA_r(x)\in\mbb{R}^{\gamma\times\gamma}$ denote a square matrix formed by taking one unique combination of the columns of $\vA(x)$. We have the following lemma.
\begin{lemma} \label{lemma_cover}
There exists a collection of open balls $(B_\ell)_{\ell\geq 1}$, $B_\ell\subset \{x:\carr(x) = C\}$ that satisfy:\\
(i) $\bigcup_{\ell\geq 1} B_\ell = \{x\in X:~\carr(x) = C\}$\\
(ii) For each $\ell\in\N$ there exists an $r_\ell\in\{1,\ldots,\binom{K}{\gamma}\}$ such that $\vA_{r_\ell}(x)$ is invertible for all $x\in B_\ell$.
\end{lemma}
\begin{proof}
%Let $p_r(x)$ denote the characteristic polynomial of $\vA_r(x)$.
For $r=1,\ldots,\binom{K}{\gamma}$, let
$$
S_r:= \{x\in X:~ \carr(x) = C,~\det \vA_r(x) \not= 0\}.
$$
%Note that whenever $x\not\in S_r$ the matrix $\vA_r(x)$ is non-singular.
By Proposition~\ref{proposition_A_full_rank}, no strategy $x\in X$ with $\carr(x) = C$ may simultaneously be in all $S_r^c$. Hence $\bigcup_r S_r= \{x:\carr(x) = C\}.$
%Let $\Omega_C$ be as defined in \eqref{def_C_face}.
%$$
%\Omega_C:=\{x\in X:~ T_i^k(x_i)\geq 0,~i=1,\ldots,\tilde{N},k=1,\ldots,\gamma_i\}.
%$$
Note also that each $S_r$ is open relative to $\{x\in X:~\carr(x) = C\}$. Thus, for each $r$ we may construct a countable cover $(B_{r,\ell})_{\ell\geq 1}$ of $S_r$, such that $B_{r,\ell} \subset \{x:\carr(x) = C\}$, $\ell\in\N$ and $\bigcup_\ell B_{r,\ell} = S_r$.

By construction we have (i) for any pair $(r,\ell)$, the matrix $\vA_r(x)$ is invertible for all $x\in B_{r,\ell}$, and (ii) $\bigcup_{r,\ell} B_{r,\ell} = \bigcup_r S_r = \{x:\carr(x) = C\}$. Hence, $(B_{r,\ell})_{r=1,\ldots,\binom{K}{\gamma},\,\ell\in\N}$ is a countable cover with the desired properties.
\end{proof}

%\begin{proof}
%For each $r\in 1,\ldots,\binom{K}{\gamma}$, $S_r^c$ is open and there exists a countable cover $(B_{r,\ell})_{\ell}$ of $S_i^c$. Taking the union $\bigcup_{r,\ell} B_{r,\ell}$ and relabeling with a single index, we get a countable cover $(B_\ell)_\ell$ with the desired properties.
%\end{proof}

Fix $\ell \in \{1,2,\ldots\}$.
%Fix $r=1,\ldots,\binom{K}{\gamma}$ and
After reordering, $\vA(x)$ may be partitioned as
$\vA(x) = $ \newline $ [\tilde \vA_{r_\ell}(x) ~ \vA_{r_\ell}(x)]$,
where $\tilde \vA_{r_\ell}(x)$ is a matrix formed by the columns of $\vA(x)$ not used to form $\vA_{r_\ell}(x)$. Let the strategy set $Y$ be reordered in the same way as the columns $\vA(x)$.\footnote{Note that we previously assumed a specific ordering for $Y$. However, this was for the purpose of proving Proposition~\ref{proposition_A_full_rank}, which is unaffected by a reordering of $Y$ at this point.}
Given a vector of utility coefficients $u\in\R^K$, let it be partitioned as $u = (u_1,u_2)$, where $u_1 = (u^1,\ldots,u^{K-\gamma})$ and $u_2 = (u^{K-\gamma+1},\ldots,u^{K}$). Define $\tilde{\rho}_\ell:B_\ell\times \R^{K-\gamma}\rightarrow \R^{\gamma}$ by
\begin{equation}\label{eq_rho_def1}
\tilde{\rho}_\ell(x,u_1) := -\vA_{r_\ell}(x)^{-1} \tilde \vA_{r_\ell}(x) u_1,
\end{equation}
If $x^*\in B_\ell$ is an equilibrium for some identical-payoffs game with utility coefficient vector $u$, then
%Given an equilibrium $x^* \in B_\ell$,
by \eqref{eq_A_equals_0} we have $\vA(x^*)u = 0$. Since $\vA_{r_\ell}(x^*)$ is invertible, this is equivalent to
$u_2 = -\vA_{r_\ell}(x^*)^{-1} \tilde \vA_{r_\ell}(x^*) u_1$.
%For any $u_1$, the unique solution to this equation is given by $u=(u_1,u_2)$, $u_2 = \tilde{\rho}_\ell(x^*,u_1)$.
Hence, if $x^*\in B_\ell$ is an equilibrium of some identical-payoffs game with utility coefficient vector $u=(u_1,u_2)$, the function $\tilde{\rho}_\ell$ permits us to recover $u_2$ given $u_1$ and $x^*$.

For each $\ell\in\N$, define the function $\rho_\ell:B_\ell\times\R^{K-\gamma}\rightarrow \R^{K}$ by
\begin{equation} \label{eq_rho_def2}
\rho_\ell(x,u_1) := (u_1,\tilde{\rho}_\ell(x,u_1)).
\end{equation}
The function $\rho_\ell$ is a trivial extension of $\tilde{\rho}_\ell$ that recovers the full vector of utility coefficients $u\in\R^{K}$ given $u_1\in \R^{K-\gamma}$ and an equilibrium $x^*\in B_\ell$.

We thus get the following corollary to Lemma~\ref{lemma_cover}.
\begin{corollary} \label{cor_cover}
There exists a countable collection of open balls $(B_\ell)_{\ell\geq 1}$, $B_\ell\subset \rOmega_C$ such that\\
(i) $\bigcup_{\ell\geq 1} B_\ell = \{x\in X:~\carr(x) = C\}$\\
(ii) For each $\ell\in\N$, there exists a differentiable function $\rho_\ell:B_\ell\times \R^{K-\gamma}\to\R^K$ such that, if $x^*\in B_\ell$ and $x^*$ is an equilibrium of the game with utility coefficients $u = (u_1,u_2)$, $u_1\in\R^{K-\gamma}$, $u_2\in\R^\gamma$, then $u = \rho_\ell(x^*,u_1)$.
\end{corollary}

We recall that our end goal is to use Sard's theorem to show that the set of second-order degenerate games has $\calL^K$-measure zero. To that end, we will now show that for each $\ell\in \N$, the set of identical payoff games that have a second-order degenerate equilibrium lying in $B_\ell$ is contained in the critical values set of $\rho_\ell$.

Suppose $x\in B_\ell$ and $u_1 \in \R^{K-\gamma}$ are arbitrary. If $u=(u_1,u_2)$ with $u_2 = \tilde{\rho}_\ell(x,u_1)$, then by the definition of $\tilde{\rho}_\ell$ we see that $\vA(x)u=0$.
By the definition of $\vA(x)$ (see \eqref{def2_F}--\eqref{def_A}) this implies
$$
F(x,u_1,\tilde{\rho}_\ell(x,u_1)) = 0, ~\quad \mbox{ for all } x\in B_\ell.
$$
%for all $x\in B_\ell$.
Thus, taking a partial derivative with respect to $x_i^k$ we get
\begin{equation}\label{eq1}
\frac{\partial}{\partial x_i^k} F(x,u_1,\tilde{\rho}_\ell(x,u_1)) = 0, \quad\quad i=1,\ldots,\tilde{N}, ~k=1,\ldots,\gamma_i-1.
\end{equation}
Consider again the decomposition $x=(x_p,x_m)$.
Using compact notation, \eqref{eq1} is restated as
\begin{equation} \label{eq_partial_jacobian_F}
D_{x_m} F(x_p,x_m,u_1,\tilde{\rho}_\ell(x_p,x_m,u_1)) = 0.
\end{equation}
Suppose $x^*\in B_\ell$ is an equilibrium of an identical-payoffs game with utility coefficient vector $u$ and $\carr(x^*)=C$.
Applying the chain rule in \eqref{eq_partial_jacobian_F}, using \eqref{eq_deriv_F_equals_hessian}, and using the fact that $F(x,u) = \vA(x)u = \tilde{\vA}_{r_\ell}(x)u_1 + \vA_{r_\ell}(x)u_2$, we find that at $x^*$ there holds\footnote{We note that $\tilde{\vH}(x^*)$ is also dependent on $u$. However, we suppress the argument $u$ since it is generally held constant in the context of the Hessian $\tilde{\vH}$.}
\begin{equation} \label{non-degeneracy_eq1}
\tilde{\vH}(x^*) = -\vA_{r_\ell}(x^*) \tilde{\vJ}_{\rho_\ell}(x^*,u),
\end{equation}
where $\tilde{\vJ}_{\rho_\ell}(x,u) := D_{\tilde{x}_m}\tilde{\rho}_\ell(x_p,\tilde{x}_m,u)\big\vert_{\tilde{x}_m = x_m}$ is the Jacobian of $\tilde{\rho}_\ell$ taken with respect to $x_m$.

Since $\vA_{r_\ell}(x)$ is invertible for all $x\in B_\ell$, this means that given any equilibrium $x^*\in B_\ell$,
%given any $x^*$ with carrier $C$ if we choose $r$ such that $\vA_r(x^*)$ is invertible (which we may always do), then
the Hessian $\tilde{\vH}(x^*)$ is nonsingular if and only if the Jacobian $\tilde{\vJ}_{\rho_\ell}(x^*,u)$ is nonsingular.

Note that the Jacobian of $\rho_\ell$ takes the form
$$
\vJ_{\rho_\ell}(x,u) =
\begin{pmatrix}
{\bf{0}} & \vI \\
\vJ_{\tilde{\rho}_\ell} & \vM
\end{pmatrix},
$$
for some matrix $\vM$. Clearly, $\det \vJ_{\rho_\ell} = 0$ if and only if $\det \tilde{\vJ}_{\rho_\ell} = 0$. Thus, by \eqref{non-degeneracy_eq1} we get the following result.
\begin{lemma} \label{lemma_critical_vals}
If $x^*\in B_\ell$ is an equilibrium of an identical-payoffs game with utility coefficient vector $u$, then
%at an equilibrium $x^*\in B_\ell$  we have
\begin{equation}
\det \vJ_{\rho_\ell}(x^*,u) = 0 \quad  \iff \quad \det \tilde{\vH}(x^*) = 0,
\end{equation}
where $\vJ_{\rho_\ell}$ is the Jacobian of $\rho_\ell$.
\end{lemma}

In particular, the above result shows that if $u$ is an identical payoffs game having a second-order degenerate equilibrium $x^*\in B_\ell$, then $u$ is contained in the set of critical values of $\rho_\ell$.
We now prove Proposition~\ref{lemma_second_order_degeneracy}.
\begin{proof}
Let $C$ be a carrier set. Let $\calU(C) \subseteq \R^K$ be the set of identical-payoff games having at least one degenerate equilibrium with carrier set $C$; that is,
$$
\calU(C) := \big\{u\in \R^{K}:~ \exists \mbox{ degenerate equilibrium } x^*\in \{x\in X: \carr(x) = C\} \big\}.
$$

Let $(B_\ell)_{\ell\in \N}$ and $(\rho_\ell)_{\ell\in \N}$ be as in Corollary~\ref{cor_cover}. We note that $B_\ell$ and $\rho_\ell$, $\ell\in \N$ are implicitly defined with respect to $C$.
For $\ell \in \N$, let $\calU(C,\ell) \subseteq \R^K$ be the subset of identical-payoff games having at least one degenerate equilibrium $x^*\in B_\ell$; that is,
$$
\calU(C,\ell) := \big\{u\in \R^{K}:~ \exists \mbox{ degenerate equilibrium } x^*\in B_\ell \big\}.
$$

By construction, we have $\bigcup_{\ell\geq 1}B_\ell = \{x\in X: \carr(x) = C\}$, and hence $\calU(C) = \bigcup_{\ell\geq 1} \calU(C,\ell)$.

%For each $\ell$, there exists an $r$ such that the function $\rho_\ell:B_\ell \times \R^{K-\gamma}\rightarrow \R^{K}$ is well defined.

%We showed above that for any $(x,u)\in B_\ell\times \R^K$ such that $x$ is an equilibrium of the identical-payoffs game with utility coefficients $u$, the Hessian $\tilde{\vH}(x)$ (taken with respect to $C$) is invertible if and only if the Jacobian of $\rho_\ell(x,u)$ is invertible.
%We showed above that for any equilibrium $x^* \in B_\ell$ the Jacobian of $\rho_\ell(x^*,u):B_\ell\times\R^{K-N-\gamma}\rightarrow\R^{K-N}$ is singular if and only if the Hessian $\tilde{\vH}(x^*)$ is singular.
By Lemma~\ref{lemma_critical_vals}, the set $\calU(C,\ell)$ is contained in the set of critical values of $\rho_\ell$. By Sard's theorem, we get that $\calU(C,\ell)$ is a set with $\calL^K$-measure zero. Since $\calU(C)$ is the countable union of sets of $\calL^K$-measure zero, it is itself a set with $\calL^K$-measure zero.

Let $\calU\subset\R^K$ denote the subset of identical-payoff games with at least one degenerate equilibrium. The set $\calU$ may be expressed as the union $~\calU = \bigcup_C\calU(C)$ taken over all possible carrier sets $C$. Since there are a finite number of carrier sets $C$, the set $\calU$ has $\calL^K$-measure zero.
\end{proof}

\section{First-Order Degenerate Games} \label{sec_first_order_degenerate_games}
The following proposition shows that, within the set of identical-payoff games, first-order degenerate games form a null set.
%Combined with Lemma \ref{lemma_second_order_degeneracy}, this proves Theorem \ref{thrm_regular_eqilibria}.
\begin{proposition} \label{lemma_first_order_degeneracy}
%Almost all potential games are first-order non-degenerate.
The set of identical-payoff games which are first-order degenerate has $\calL^K$-measure zero.
\end{proposition}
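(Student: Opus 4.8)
The plan is to follow the template of the proof of Proposition \ref{lemma_second_order_degeneracy} in Section \ref{sec_second_order_degenerate_games}, but to observe that a first-order degeneracy furnishes one \emph{additional} equilibrium-type equation beyond those that define the carrier, so that the offending strategy $x^*$ is confined to a face of $X$ whose dimension is one less than the number of equations in the linear system $\vA(x^*)u=0$ relating $x^*$ to the utility vector. This one-dimensional deficit makes the natural parametrizing map have a domain of dimension $K-1$ rather than $K$, so its image is automatically $\calL^K$-null and no appeal to Sard's theorem is needed.

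In detail, fix a carrier $C=C_1\cup\cdots\cup C_N$ and a pair $(i,k)$ with $k\in\{\gamma_i,\ldots,K_i-1\}$, i.e.\ with $y_i^{k+1}\notin C_i$, and let $C'$ be the extended carrier obtained from $C$ by adjoining $y_i^{k+1}$ to $C_i$; the matrix $\vA'(x)$ built from $C'$ as in \eqref{def_A} then has $\gamma+1$ rows and $K$ columns. Suppose $x^*$ is an equilibrium of an identical-payoffs game $u$ with $\carr(x^*)=C$ that is first-order degenerate in the sense of Definition \ref{def_first_order_degen} because $\partial U(x^*)/\partial x_i^k=0$. By \eqref{eq_potential_expanded_form2} this derivative equals $U(y_i^{k+1},x^*_{-i})-U(y_i^1,x^*_{-i})$, so, together with the ordinary equilibrium equations $\partial U(x^*)/\partial x_j^m=0$ that hold for all components free under $C$, it follows that $x^*$ satisfies the \emph{entire} linear system $\vA'(x^*)u=0$ (cf.\ \eqref{eq_A_equals_0}). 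Since $\carr(x^*)=C\subsetneq C'$, Proposition \ref{proposition_A_full_rank}, which is stated precisely for strategies whose carrier is \emph{contained} in the reference set, applies and shows that $\vA'(x^*)$ has full row rank $\gamma+1$.

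From here I would reuse the covering-and-inversion construction of Section \ref{sec_second_order_degenerate_games} essentially verbatim. Cover the $\gamma$-dimensional set $\{x\in X:\carr(x)=C\}$ by countably many balls $(B_\ell)_{\ell\geq1}$ (relative to the affine hull of the face $\Omega_C$) such that on each $B_\ell$ some fixed $(\gamma+1)\times(\gamma+1)$ submatrix $\vA'_{r_\ell}(x)$ of $\vA'(x)$ is invertible; this is possible by Proposition \ref{proposition_A_full_rank}, after shrinking the balls so that $\det\vA'_{r_\ell}$ is bounded away from zero on each of them. As in \eqref{eq_rho_def1} and the discussion following it, define the map $\rho_\ell:B_\ell\times\R^{K-(\gamma+1)}\to\R^K$ recovering the full utility vector from $x$ and the complementary coordinates $u_1$; it is smooth and (shrinking $B_\ell$ if necessary) Lipschitz. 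Any identical-payoffs game having a first-order degenerate equilibrium in $B_\ell$ with carrier $C$ witnessed by $(i,k)$ lies in $\rho_\ell\bigl(B_\ell\times\R^{K-(\gamma+1)}\bigr)$. But the domain of $\rho_\ell$ has dimension $\gamma+\bigl(K-(\gamma+1)\bigr)=K-1$, so by the fact that a Lipschitz map cannot raise Hausdorff dimension (\cite{EvansGariepy}, Section 2.4.2) this image has Hausdorff dimension at most $K-1$, hence $\calL^K$-measure zero.

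The set of first-order degenerate identical-payoffs games is contained in the union of these images over the finitely many carriers $C$, the finitely many admissible pairs $(i,k)$, and the countably many indices $\ell$ -- a countable union of $\calL^K$-null sets, hence $\calL^K$-null. Combined with Proposition \ref{remark_closedness}(iii) this gives the proposition. I expect the only genuinely new difficulty relative to Proposition \ref{lemma_second_order_degeneracy} to be the bookkeeping of the extended-carrier step: one must check carefully that a single degenerate outside-component really does promote $x^*$ to a solution of the full extended system $\vA'(x^*)u=0$ (so that the rank estimate of Proposition \ref{proposition_A_full_rank} is available) while at the same time $x^*$ remains pinned to the lower-dimensional face $\Omega_C$ -- it is exactly the mismatch between $\dim\Omega_C=\gamma$ and the $\gamma+1$ equations that powers the whole argument.
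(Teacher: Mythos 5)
Your proof is correct and follows essentially the same route as the paper's: a first-order degeneracy supplies one extra row for the linear system $\vA(x^*)u=0$, Proposition \ref{proposition_A_full_rank} (in its $\carr(x)\subseteq C$ form) gives full row rank, and the resulting one-dimensional deficit makes the parametrizing Lipschitz map have a $(K-1)$-dimensional domain, so its image is $\calL^K$-null without any appeal to Sard. The paper merely organizes the same count differently: it fixes the extended carrier $C$, lets $x^*$ range over the at-most-$(\gamma-1)$-dimensional face of a strict subcarrier $\widehat{C}$, and pairs it with the $(K-\gamma)$-dimensional kernel of $\vA(x)$ via a globally constructed Lipschitz basis $\vB(x)$, rather than covering the $\gamma$-dimensional face of $C$ with balls and inverting a $(\gamma+1)\times(\gamma+1)$ submatrix of the enlarged matrix as you do.
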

\begin{proof}

Fix some set $C=C_1\cup\cdots\cup C_N$ where each $C_i$ is a nonempty subset of $Y_i$. Let $\widehat{C}$ be any \emph{strict} subset of $C$. In the context of this proof let $\gamma_i := |C_i|$, let $\gamma := \sum_{i=1}^{N} (\gamma_i-1)$, let $\tilde N := |\{i=1,\ldots,N: \gamma_i\geq 2 \}|$, and assume $Y_i$ is reordered so that $\{y_i^1,\ldots,y_i^{\gamma_i}\} = C_i$. Note that this ordering implies that for any $x$ with $\carr(x) = C$ we have $y_i^1\in \carr(x)$, $i=1,\ldots,N$.

Given an equilibrium $x^*$ let the \emph{extended carrier} of $x^*$ be defined as
\begin{equation}\label{def_ext_carr}
\extcarr(x^*):= \carr(x^*)\cup\left(\bigcup_{i=1}^N \{y_i^k\in Y_i: ~k=2,\ldots K_i, ~ \frac{\partial U(x^*)}{\partial x_i^{k-1}} = 0 \} \right).
\end{equation}
Suppose that $x^*$ is an equilibrium with \emph{extended} carrier $C$. By Lemma ~\ref{lemma_carrier_vs_gradient} (see appendix) and the ordering we assumed for $Y_i$ we have
\begin{equation} \label{IR6_eq1}
F_i^k(x^*,u) = \frac{\partial U(x^*)}{\partial x_i^{k-1}}= 0, \qquad i=1,\ldots,\tilde{N}, ~k=1,\ldots,\gamma_i-1,
\end{equation}
where $F_i^k$ is as defined in \eqref{def_F}. Thus, if $x^*$ is an equilibrium for some identical-payoffs game with utility coefficient vector $u\in\R^K$, and $\extcarr(x^*) = C$, then by the definition of $\vA(x)$ (see \eqref{def2_F}--\eqref{def_A}), \eqref{IR6_eq1} implies that $\vA(x^*)u = 0$, or equivalently,
$$
u\in \ker \vA(x^*),
$$
where the matrix $\vA(x)\in \R^{\gamma\times K}$ is defined with respect to $C$, as in \eqref{def_A}.

Let $\calU(C,\widehat{C})\subseteq \R^K$ be the set of identical-payoff games in which there exists an equilibrium $x^*$ with $\carr(x^*) = \widehat{C}$ and $\extcarr(x^*) = C$. Let
$$
\widehat{X} := \{x\in X:~\carr(x) = \widehat{C}\}.
$$
By the above we see that
\begin{equation} \label{IR6_eq2}
\calU(C,\widehat{C}) \subseteq \cup_{x\in \widehat{X}} \ker \vA(x).
\end{equation}
For each $x\in\widehat{X}$, let $\range \vA(x)^T$ denote the range space of $\vA(x)^T$.
Each entry of $\vA(x)^T$ is a polynomial function in $x$ and hence is Lipschitz continuous over the bounded set $\widehat{X}$.
By Proposition~\ref{proposition_A_full_rank} we have $\rank \vA(x)^T = \gamma$ for all $x\in \widehat{X}$.
%Since $\vA(x)$ is entrywise Lipschitz continuous in $x$ and since $\rank \vA(x) = \gamma$ for all $x\in \widehat{X}$,
Thus, we may choose a set of $\gamma$ basis vectors
$\{\vb_1(x),\ldots,\vb_\gamma(x)\}$ spanning $\range \vA(x)^T$ such that each $\vb_k(x)\in \R^{K}$, $k=1,\ldots,\gamma$ is a Lipschitz continuous function in $x$.
Moreover, we may choose a complementary set of $(K-\gamma)$ linearly independent vectors $\{\vb_{\gamma+1}(x),\ldots,\vb_{K}(x)\}$ forming a basis for the orthogonal complement $(\range \vA(x)^T)^{\perp}$, with each $\vb_k(x)\in\R^K$, $k=\gamma+1,\ldots,K$ being a continuous function in $x$. Let $\vB(x) := \left( \vb_{\gamma+1}(x)~ \cdots ~ \vb_K(x)\right)\in \R^{K\times (K-\gamma)}$.
Let $f:\widehat{X}\times \R^{K-\gamma} \rightarrow \R^K$ be given by $
f(x,v) := \vB(x)v.$ Since $\vB(x)$ is Lipschitz continuous in $x$ and $\widehat{X}$ is bounded, $f$ is Lipschitz continuous.
By the fundamental theorem of linear algebra, for each $x\in\widehat{X}$, $\ker \vA(x) = (\range \vA(x)^T)^{\perp} = \range \vB(x)$. Hence,
\begin{equation} \label{IR6_eq3}
\bigcup_{\substack{x\in \widehat{X}}} \ker \vA(x)
=  f(\widehat{X} \times \R^{K-\gamma}).
\end{equation}
Since $\widehat{C}\subsetneq C$, the Hausdorff dimension of $ \widehat{X}$ is at most $(\gamma-1)$ and the Hausdorff dimension of $\widehat{X}\times\R^{K-\gamma}$ is at most $K-1$. Since $f$ is Lipschitz continuous, this implies (see \cite{EvansGariepy}, Section 2.4) that the Hausdorff dimension of $f(\widehat{X} \times \R^{K-\gamma})$ is at most $K-1$, and in particular, that $f(\widehat{X} \times \R^{K-\gamma})$ has $\calL^K$-measure zero. By \eqref{IR6_eq2} and \eqref{IR6_eq3}, this implies that $\calU(C,\widehat{C})$ has $\calL^K$-measure zero.

Let $\calU \subseteq \R^K$ denote the set of all identical-payoff games containing a first-order degenerate equilibrium. Since we may represent this set as a finite union of $\calL^K$-measure zero sets,
$$
\calU=\bigcup_{\substack{\emptyset \neq C_i \subseteq Y_i,~i=1,\ldots,N\\ C=C_1\cup\cdots\cup C_N\\ \widehat{C}\subsetneq C  }} \calU(C,\widehat{C}),
$$
the set $\cal U$ also has $\calL^K$-measure zero.
\end{proof}

\section{Regularity in Exact and Weighted Potential Games} \label{sec_exact_and_weighted_games}

The goal of this section will be to prove the following proposition.
%goal of this section is to prove generic regularity in exact and weighted potential games, as stated in the following proposition.
\begin{proposition} \label{prop_exact_and_weighted}
$~$\\
\noindent (i) Almost all weighted potential games are regular.\\
\noindent (ii) Almost all exact potential games are regular.
\end{proposition}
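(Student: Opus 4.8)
I would deduce both parts from Proposition~\ref{prop_identical_payoff} by exploiting, in each class, an explicit equivalence with identical-payoffs games together with the regularity characterization of Lemma~\ref{lemma_ipg_to_weighted}. Fix a game size. By Proposition~\ref{remark_closedness} the sets of irregular games in $\calP$ and in $\calW$ are already closed in $\calP$, resp.\ $\calW$, so in both cases it suffices to show the irregular set has Lebesgue measure zero of the appropriate dimension.

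\textbf{Exact potential games (part (ii)).} The potential of an exact potential game is determined up to an additive constant, so the map $p:\calP\to\R^{K}/\R\ones$ sending $\Gamma$ to its potential modulo constants is a well-defined \emph{linear} map, and it is surjective since every identical-payoffs game $(v,\ldots,v)$ lies in $\calP$. By Lemma~\ref{lemma_ipg_to_weighted}, $\Gamma\in\calP$ is regular if and only if the identical-payoffs game whose common utility is the potential of $\Gamma$ is regular; since adding a constant to the utility of an identical-payoffs game changes neither its equilibria nor the map $\tilde F$ of Definition~\ref{def_regular_equilib} (the differences $U_i(y_i^{k+1},x_{-i})-U_i(y_i^{1},x_{-i})$ are unaffected), this depends only on $p(\Gamma)$. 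Hence the irregular games in $\calP$ form exactly $p^{-1}(\calN)$, where $\calN\subseteq\R^{K}/\R\ones$ is the image of the set of irregular identical-payoffs games. By Propositions~\ref{lemma_second_order_degeneracy} and~\ref{lemma_first_order_degeneracy} that set is $\calL^{K}$-null in $\R^{K}$, and because regularity is invariant under adding a constant it is a union of lines in the direction $\ones$; hence $\calN$ is $\calL^{K-1}$-null. Since the preimage of a null set under a surjective linear map is null, $p^{-1}(\calN)$ is $\calL^{K_p}$-null in $\calP$, which together with Proposition~\ref{remark_closedness}(ii) proves (ii).

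\textbf{Weighted potential games (part (i)).} Here the potential is pinned down only up to a positive multiple and an additive constant, so instead of a linear pullback I would parametrize $\calW$ directly. Define $G:\{w\in(0,\infty)^{N}:w_1=1\}\times\calP\to\R^{NK}$ by $G(w,\Gamma_0):=(w_i u_i^{0})_{i=1}^{N}$ for $\Gamma_0=(u_i^{0})_{i}\in\calP$. One checks: (a) $G$ is polynomial, hence locally Lipschitz; (b) its image is exactly $\calW$, since for $\Gamma\in\calW$ with some admissible weight vector $\hat w$, rescaling player $i$'s utilities by $1/w_i$ with $w:=\hat w/\hat w_1$ produces an exact potential game $\Gamma_0\in\calP$ with $G(w,\Gamma_0)=\Gamma$; (c) $G(w,\Gamma_0)$ is a weighted potential game with the \emph{same} potential as $\Gamma_0$, so applying Lemma~\ref{lemma_ipg_to_weighted} to $G(w,\Gamma_0)$ and to $\Gamma_0$ shows that $G(w,\Gamma_0)$ is regular if and only if $\Gamma_0$ is regular; and (d) $G$ restricts to a diffeomorphism between full-measure open subsets of its domain and of $\calW$, so that $K_w=(N-1)+K_p$. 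By (b) and (c), the irregular games in $\calW$ are exactly $G\big(\{w:w_1=1\}\times\calP^{\mathrm{irr}}\big)$, where $\calP^{\mathrm{irr}}\subseteq\calP$ denotes the irregular exact potential games; by part (ii) $\calP^{\mathrm{irr}}$ is $\calL^{K_p}$-null in $\calP$, so by Fubini $\{w:w_1=1\}\times\calP^{\mathrm{irr}}$ is $\calL^{K_w}$-null in its ambient affine space, and since $G$ is locally Lipschitz with a $K_w$-dimensional domain it sends this set to a $\calH^{K_w}$-null, hence $\calL^{K_w}$-null, subset of $\calW$ (Lipschitz maps do not increase Hausdorff dimension; cf.\ \cite{EvansGariepy}, Section~2.4). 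With Proposition~\ref{remark_closedness}(i) this proves (i).

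\textbf{Main obstacle.} The one place requiring real care is the non-uniqueness of the potential in the weighted case---the same uniqueness failure flagged in Remark~\ref{remark_harsanyi_comparison}---which forces the detour through the explicit parametrization $G$ instead of a clean linear surjection, and which obliges one to verify that $G$ is a well-behaved (generically bijective, locally diffeomorphic) parametrization of \emph{all} of $\calW$, including the degenerate games whose potential fails to depend on some player's action, and that ``irregular in $\calW$'' coincides with $G$ of the product set as claimed. The remaining ingredients---the Fubini arguments, the behavior of null sets under linear surjections and under Lipschitz maps, the invariance of regularity under adding constants to utilities, and the compatibility of $\calL^{K_w}$ on $\calW$ with the parametrization---are routine.
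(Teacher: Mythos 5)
Your argument is correct and, at its core, is the same reduction the paper uses: regularity of a (weighted or exact) potential game depends only on its potential (Lemma \ref{lemma_ipg_to_weighted}), the potential ranges over an identical-payoffs game determined up to an additive constant (and, in the weighted case, a positive scaling absorbed into the weights), and the irregular identical-payoffs games are null by Proposition \ref{prop_identical_payoff} (Lemma \ref{lemma_reduced_u_regular} in the paper). For part (ii) your linear surjection $p:\calP\to\R^K/\R\ones$ is exactly the paper's partition $(\calP_u)_{u\in\tilde\calI}$ viewed as a fibration, and ``preimage of a null set under a surjective linear map is null'' is the paper's Fubini step; the two proofs are essentially identical. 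For part (i) you diverge mildly: the paper asserts a product decomposition $\calW\cong\tilde\calI\times\R^{K_w-K+1}$ and applies Fubini directly, whereas you realize $\calW$ as the image of the locally Lipschitz map $G(w,\Gamma_0)=(w_iu_i^0)_i$ on $\{w_1=1\}\times\calP$ and push a null set forward using the fact that Lipschitz maps do not raise Hausdorff dimension. Your route is arguably more explicit about where the weights live, but it shifts the load onto your claim (d) that $K_w=(N-1)+K_p$ (equivalently, that the domain of $G$ has full dimension $K_w$): if that failed with $K_w<(N-1)+K_p$, a Lipschitz image of an $\calL^{(N-1)+K_p}$-null set need not be $\calH^{K_w}$-null, so (d) is genuinely load-bearing and deserves a short proof rather than an assertion. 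To be fair, the paper's own unique-representation claim for $\calW$ is asserted with comparable brevity, so this is a shared (and fillable) gap rather than a defect peculiar to your argument; everything else, including the closedness step via Proposition \ref{remark_closedness}, is in order.
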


%given a potential game with potential function $U$, we define the ``associated identical-payoffs game'' to be the identical-payoffs game in which the utility of each player is given by $U_i=U$. Clearly, an equilibrium in a potential game is regular if and only if it is a regular equilibrium in the associated identical interests game.

%The remainder of this section will be devoted to proving this proposition.
Exact and weighted potential games are closely related to games with identical payoffs. By identifying equivalence relationships between identical-payoff games and exact and weighted potential games, this result follows as a relatively simple consequence of Proposition~\ref{prop_identical_payoff}.

Let the number of players $N\geq 2$ and action-space size $K_i\geq 2$, $i=1,\ldots,N$ be arbitrary.
Define
$$
\tilde \calI := \{u\in \calI:~ \ones^T u = 0\},
$$
where $\calI = \R^K$ is the set of identical-payoff games as given Section~\ref{sec_almost_all}.
%in \eqref{def_calI}

The set $\tilde \calI$ will provide a convenient means of partitioning the sets of exact and weighted potential games.

Note that an identical-payoffs game $u\in \tilde \calI$ is regular if and only if $\tilde u=u+c\ones$ is regular for every $c\in\R$.
Note also that $\tilde \calI$ is isomorphic to $\R^{K-1}$. (Henceforth we will treat $\tilde \calI$ as $\tilde \calI = \R^{K-1}$.) This, along with Proposition~\ref{prop_identical_payoff}, implies the following lemma.
\begin{lemma} \label{lemma_reduced_u_regular}
The set of games in $\tilde \calI$ that are irregular has $\calL^{K-1}$-measure zero.
%Almost all $u\in \tilde \calI$ are regular.
\end{lemma}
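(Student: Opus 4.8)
The plan is to deduce Lemma~\ref{lemma_reduced_u_regular} directly from Proposition~\ref{prop_identical_payoff} together with the two observations recorded just above the statement: first, that regularity of an identical-payoffs game $u$ is unchanged when one adds a constant vector $c\ones$ to $u$; and second, that $\tilde\calI$ is an affine (indeed linear) hyperplane in $\calI=\R^K$, isomorphic to $\R^{K-1}$. The key point is that the map $\Phi:\R^{K-1}\times\R\to\R^K$, $\Phi(u,c)=u+c\ones$ (where we identify $\tilde\calI\cong\R^{K-1}$) is a linear isomorphism of $\R^{K-1}\times\R$ onto $\R^K$, since $\ones\notin\tilde\calI$.

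Here are the steps in order. First, let $\calB\subseteq\calI=\R^K$ denote the set of irregular identical-payoffs games; by Proposition~\ref{prop_identical_payoff}, $\calL^K(\calB)=0$. Second, observe by the constant-shift invariance that $\calB$ is invariant under translation by $\ones$: $u\in\calB\iff u+c\ones\in\calB$ for all $c\in\R$. Third, let $\tilde\calB := \calB\cap\tilde\calI$ be the set of irregular games in $\tilde\calI$; this is exactly the set whose measure we must control. Fourth, pull back through $\Phi$: since $\Phi$ is a linear isomorphism, $\Phi^{-1}(\calB)$ has $\calL^K$-measure zero in $\R^{K-1}\times\R=\R^K$, and by the translation invariance $\Phi^{-1}(\calB) = \tilde\calB\times\R$ (as a subset of $\R^{K-1}\times\R$). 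Fifth, apply Fubini's theorem: $0=\calL^K(\tilde\calB\times\R)=\int_\R \calL^{K-1}(\tilde\calB)\,dc$, which forces $\calL^{K-1}(\tilde\calB)=0$. This is exactly the claimed conclusion that the set of irregular games in $\tilde\calI$ has $\calL^{K-1}$-measure zero.

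One small technical point to address cleanly: the invariance statement ``$u$ regular $\iff u+c\ones$ regular'' is asserted in the text for $u\in\tilde\calI$, but for the Fubini argument I want it for all $u\in\calI$; this follows immediately because every $u\in\calI$ can be written uniquely as $u = u_0 + c_0\ones$ with $u_0\in\tilde\calI$ (projecting off the $\ones$-component via $u_0 = u - (\ones^Tu/K)\ones$), so $u+c\ones = u_0 + (c_0+c)\ones$, and applying the $\tilde\calI$-version twice gives the general invariance. Alternatively, and perhaps more transparently, one can note that adding $c\ones$ to $u$ changes the potential function $U$ only by the additive constant $c$, hence changes neither the gradient nor the Hessian of $U$ on any face $\Omega_C$, so neither the first-order nor the second-order non-degeneracy condition is affected, and regularity is preserved by Lemma~\ref{lemma_non_degen_to_regular}.

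The main (and really the only) obstacle is bookkeeping rather than substance: one must be careful that the identification $\tilde\calI\cong\R^{K-1}$ used in the Fubini step is compatible with Lebesgue measure up to a positive constant factor, which is automatic for any linear isomorphism, so null sets are preserved in both directions. Everything else is a routine application of Fubini's theorem to the product structure $\Phi^{-1}(\calB)=\tilde\calB\times\R$ obtained from translation invariance, and the conclusion drops out. The closedness half of the ``almost all'' claim, incidentally, is not needed here—Lemma~\ref{lemma_reduced_u_regular} only concerns the measure-zero assertion—and the closedness will be supplied separately via Proposition~\ref{remark_closedness}.
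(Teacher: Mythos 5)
Your argument is correct and is exactly the route the paper takes: the paper derives Lemma~\ref{lemma_reduced_u_regular} from Proposition~\ref{prop_identical_payoff} via the same two observations (invariance of regularity under shifts by $c\ones$ and the identification $\tilde\calI\cong\R^{K-1}$), leaving the product-decomposition/Fubini step implicit where you spell it out. Your additional care about extending the shift-invariance to all of $\calI$ and about the measure-compatibility of the isomorphism fills in details the paper omits but does not change the approach.
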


%As defined in Section \ref{sec_almost_all}, let $\calP\subset \R^{NK}$ denote the set of all exact potential games and let $\calW\subset \R^{NK}$ denote the set of all weighted potential games (of size $(N,(K_i)_{i=1}^N)$).

The sets of weighted and exact potential games (of size $(N,(K_i)_{i=1}^N)$) are defined explicitly as follows.
For convenience, we decompose the vector of utility coefficients $u\in\R^{NK}$ as $u = (u_i)_{i=1}^N$, where $u_i\in\R^K$ gives the pure strategy utility received by player $i$ for each pure strategy $y\in Y$.  The set of weighted potential games is given by
\begin{align*}
\calW := \big\{(u_i)_{i=1}^N \in \R^{NK}:~ \eqref{eq_def_weighted_pot} \mbox{ holds for some } u\in \R^K \mbox{ and some } (w_i)_{i=1}^N,&\\
w_i>0, i=1,~\ldots,N &\big\}.
\end{align*}
the set of exact potential games is given by
\begin{align*}
\calP := \big\{(u_i)_{i=1}^N \in \R^{NK}:~ \eqref{eq_def_weighted_pot} \mbox{ holds for some } u\in \R^K & \mbox{ with } (w_i)_{i=1}^N =\ones \big\},
\end{align*}
where $\ones$ denotes the vector of all ones.

A element $v \in \calP$ or $v\in \calW$ is a vector in $\R^{NK}$ which we decompose as $v=(v_i)_{i=1}^N$
%or $w=(w_i)_{i=1}^N$,
where $v_i\in \R^K$
%and $w_i\in \R^K$
represent the pure-strategy utility of player $i$.
For each $u\in \tilde \calI$, define
\begin{align*}
\calP_u := \big\{v\in \R^{NK}:~ v_i(y_i,y_{-i}) - v_i(y_i',y_{-i}) = u(y_i,y_{-i}) - u(y_i',y_{-i})\\  ~\forall i=1,\ldots,N, ~y_i,y_i'\in Y_i, ~y_{-i}\in Y_{-i} \big\};
\end{align*}
to be the set of all potential games with potential function $u\in \tilde \calI$. Using the definition of an exact potential game (see Section~\ref{sec_pot_games_prelim}), it is straightforward to verify that $\calP_u\cap \calP_{\tilde u} = \emptyset$ for every $u,\tilde u\in \tilde \calI$, $u\not= \tilde u$ and $\bigcup_{u\in \tilde \calI} \calP_u = \calP$. Thus $(\calP_u)_{u\in\tilde \calI}$ partitions $\calP$.
Likewise, define
\begin{align*}
\calW_u := \big\{v\in \R^{NK}:~ v_i(y_i,y_{-i}) - v_i(y_i',y_{-i}) = w_i(u(y_i,y_{-i}) - u(y_i',y_{-i}))\\  ~\forall i=1,\ldots,N, ~y_i,y_i'\in Y_i, ~y_{-i}\in Y_{-i}, \mbox{ and for some } w_i>0 \big\}
\end{align*}
to be set of all weighted potential games with potential function $u\in \tilde \calI$.
As in the case of exact potential games, we see that $(\calW_u)_{u\in \tilde \calI}$ partitions $\calW$.

An equilibrium in a potential game is regular if and only if it is a regular equilibrium in the associated identical payoffs game. Thus, we get the following result.

\begin{lemma} \label{lemma_u_v_equivalence}$~$\\
\noindent (i) For each $u\in\tilde \calI$ and each exact potential game $v\in \calP_u$, $v$ is regular if and only if $u$ is regular.\\
\noindent (ii) For each $u\in\tilde \calI$ and each weighted potential game $w\in \calW_u$, $w$ is regular if and only if $u$ is regular.
\end{lemma}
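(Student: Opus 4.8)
The plan is to deduce this lemma directly from Lemma \ref{lemma_ipg_to_weighted} together with the best-response equivalence \eqref{eq_BR_equivalence}. First, I would note the bookkeeping: by the defining constraints, any $v\in\calP_u$ is precisely an exact potential game with potential function $u$ in the sense of Definition \ref{def_exact_pot_game}, and any $w\in\calW_u$ is a weighted potential game with potential function $u$ and some positive weight vector $(w_i)_{i=1}^N$. Consequently the ``associated identical-payoffs game'' of $v$ (or of $w$), in the sense of Section \ref{sec_regularity_in_pot_games}, is the identical-payoffs game whose common pure-strategy payoff vector is $u$; viewed as a point of $\calI=\R^K$ this is exactly the game $u\in\tilde\calI\subseteq\calI$ appearing in the statement.

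Next I would invoke Lemma \ref{lemma_ipg_to_weighted}: an equilibrium $x^*$ of $v$ (respectively $w$) is regular if and only if $x^*$ is a regular equilibrium of the associated identical-payoffs game $u$. At the same time, by \eqref{eq_BR_equivalence} the potential game $v$ (or $w$) and its associated identical-payoffs game are best-response equivalent in mixed strategies, hence they share the same best-response correspondence and therefore the same Nash equilibrium set. Combining these two observations, the equilibrium set of $v$ coincides with that of $u$, and under this identification regularity transfers in both directions. Therefore every equilibrium of $v$ is regular precisely when every equilibrium of $u$ is regular, i.e.\ $v$ is regular if and only if $u$ is regular, proving part (i). Part (ii) is obtained verbatim with $\calP_u$ replaced by $\calW_u$, using that Lemma \ref{lemma_ipg_to_weighted} is stated for arbitrary (weighted) potential games.

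There is essentially no technical obstacle here, since the substantive content has already been established in Lemma \ref{lemma_ipg_to_weighted}; the argument is a short translation between the partition $(\calP_u)_{u\in\tilde\calI}$ (resp.\ $(\calW_u)_{u\in\tilde\calI}$) and the space of identical-payoffs games. The only mild point of care is that the constraints defining $\calP_u$ and $\calW_u$ pin the potential down only up to an additive constant; but adding a constant $c$ to $u$ merely adds $c$ to its multilinear extension $U$, which changes neither the best-response correspondence nor the Jacobian $D_x\tilde F$, so regularity is insensitive to this ambiguity (this is also the reason no generality is lost by restricting to $\ones^T u=0$ in $\tilde\calI$, cf.\ the remark preceding Lemma \ref{lemma_reduced_u_regular}).
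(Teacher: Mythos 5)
Your proposal is correct and follows the paper's own route exactly: the paper derives this lemma directly from Lemma \ref{lemma_ipg_to_weighted}, using the coincidence of equilibrium sets via \eqref{eq_BR_equivalence} noted in the remark following that lemma. Your additional observation about the additive-constant ambiguity being harmless is a sensible (if implicit in the paper) point of care.
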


Recall that $\calP$ and $\calW$ are subspaces of $\R^{NK}$ with dimensions $K_p$ and $K_w$, respectively (see Section~\ref{sec_almost_all}). Since $(\calP_u)_{u\in\tilde \calI}$ partitions $\calP$, every exact potential game $v\in \calP$ may be uniquely represented by a vector $(u,z)$ where $u\in \tilde \calI = \R^{K-1}$ and $z \in \R^{\tilde K_p}$, $\tilde K_p :=K_p-K+1$. Likewise, every weighted potential game $v\in \calW$ may be uniquely represented by a vector $(u,z)$ where $u\in \tilde \calI = \R^{K-1}$ and $z \in \R^{\tilde K_w}$, $\tilde K_w :=K_w-K+1$.

Let $\IR_p\subset \calP$ and $\IR_w\subset \calW$ denote the subsets of the exact and weighted potential games (respectively) that are irregular. Recall that by Remark~\ref{remark_closedness}, $\IR_p$ and $\IR_w$ are closed and hence measurable. Since $\calL^{K-1}(\tilde \calI)= 0$ we see that $\calL^{K_p}(\IR_p) = \calL^{K_p}\left(\tilde \calI \times \R^{K_p-K+1}\right) = \calL^{K-1}\left( \tilde \calI \right) \calL^{K_p-K+1}\left( \R^{K_p-K+1} \right) = 0$.
%Let $h_p,h_w:\tilde \calI\to \R\cup\{\infty\}$ be defined by
%$$h_p(u) := \int \rchi_{\IR_p}(u,z) \,d\calL^{\tilde K_p}(z)$$
%and
%$$h_w(u) := \int \rchi_{\IR_w}(u,z) \,d\calL^{\tilde K_w}(z).$$
%By Lemmas \ref{lemma_reduced_u_regular} and \ref{lemma_u_v_equivalence} we see that $h_p(u) = 0$ and $h_w(u)=0$ for almost every $u\in\tilde \calI$.
%Thus we get
%\begin{align*}
%\calL^{K_p}(\IR_p) & = \int\int \rchi_{IR_p}(u,z) \,d\calL^{\tilde K_p}(z) \,d\calL^{K-1}(u)\\
%& = \int h(u) \,d\calL^{K-1}(u) = 0.
%\end{align*}
By similar reasoning we also see that $\calL^{K_w}(\IR_w) = 0.$
Since $\IR_p$ and $\IR_w$ are closed with respect to $\calP$ and $\calW$ (see Remark~\ref{remark_closedness}), this proves Proposition~\ref{prop_exact_and_weighted}.

\section{Conclusion} \label{sec_conclusion}
Regular NE are isolated, robust, and have a simple analytic structure. Within the subclass of potential games, regular equilibria have a simple and intuitive meaning in terms of the potential function (see Section~\ref{sec_reg_equilib_pot_games}). The paper showed that almost all weighted potential games, almost all exact potential games, and almost all games with identical payoffs are regular.
This result implies that properties holding in regular potential games are inherently robust to payoff perturbations and properties only holding in irregular games are inherently non-robust to payoff perturbations.

Regular potential games can be particularly useful in the study of game-theoretic learning processes. As an example application, in a companion work \cite{swens2017BRdynamics} the authors study BR dynamics in potential games. Regular potential games greatly facilitate the analysis of BR dynamics. Moreover, the results of the present paper allow one to immediately conclude that results obtained for BR dynamics in \cite{swens2017BRdynamics} hold for almost all potential games, thus addressing open questions regarding the convergence rate and limit set composition of BR dynamics \cite{harris1998rate,swens2017BRdynamics,krishna1998convergence,hofbauer1995stability}.
%
%In \cite{swens2017BRdynamics} it was shown that, restricted to the class of regular potential games, BR dynamics are well posed and several important properties can be easily characterized. The present work shows that this characterization of BR dynamics holds in almost all potential games, thus allowing one to address open questions regarding the convergence rate and limit set composition of BR dynamics \cite{harris1998rate,swens2017BRdynamics,krishna1998convergence,hofbauer1995stability}.
As another example, \cite{cohen2017learning} studies a form of no-regret learning in potential games. It is shown that if the potential game is regular, then convergence to NE and convergence rate estimates can be established. The present work allows one to immediately conclude that the results of \cite{cohen2017learning} hold for almost all potential games. An important future research direction is to study the application of regular potential games in facilitating the analysis of additional game-theoretic learning processes.

{\small

\appendix

\section{} \label{Appendix-math-notation}

We use the following standard mathematical notation:
\begin{itemize}
%\item Let $K:= |Y|-N$. (We had it as $K:=|Y|$ before, but I think it will be easier this way since it gives $X\subset \R^K$.
\item $\N:=\{1,2,\ldots\}$.
%\item $\nabla_{x_i}U(x) := (\frac{\partial U}{\partial x_i^k}(x))_{k=1}^{K_i-1}$ gives the gradient of $U$ with respect to the strategy of player $i$ only. $\nabla U(x) := (\frac{\partial U}{\partial x_i^k}(x))_{\substack{i=1,\ldots,N\\ k=1,\ldots,K_i-1}}$ gives the full gradient of $U$.\footnote{NOTE: I think this may be removed!}
\item The mapping $\sgn:\R^{n\times m} \rightarrow \R^{n\times m}$ is given by
$$
\left(\sgn(\vA)\right)_{i,j} :=
\begin{cases}
1 & \mbox{ if } a_{i,j} > 0\\
-1 & \mbox{ if } a_{i,j} < 0\\
0 & \mbox{ if } a_{i,j} = 0.\\
\end{cases}
$$
\item Given two matrices $\vA$ and $\vB$ of the same dimension, $\vA\circ \vB$ denotes the Hadamard product (i.e., the entrywise product) of $\vA$ and $\vB$.
    %That is, if $\vC = \vA\circ \vB$, then  $c_{ij} = a_{ij}b_{ij}$.
\item Given a set of matrices (or scalars) $\{\vA_1,\ldots,\vA_n\}$, possibly of differing dimensions, the notation $\diag(\vA_1,\ldots,\vA_n)$ denotes the associated block diagonal matrix.
\item Suppose $m,n,p\in \N$,  $F_i:\R^m\times \R^n \rightarrow \R$, for $i=1,\ldots,p$. Suppose further that $F: \R^m\times \R^n \rightarrow \R^p$ is given by $F(w,z) = (F_i(w,z))_{i=1,\ldots,p}$. Then the operator $D_w$ gives the Jacobian of $F$ with respect to the components of $w=(w_k)_{k=1,\ldots,m}$; that is
    $$
    D_w F(w,z) =
    \begin{pmatrix}
    \frac{\partial F_1(w,z)}{\partial w_1} \cdots \frac{\partial F_1(w,z)}{\partial w_m}\\
    \vdots \quad \ddots \quad \vdots\\
    \frac{\partial F_p(w,z)}{\partial w_1} \cdots \frac{\partial F_p(w,z)}{\partial w_m}\\
    \end{pmatrix}.
    $$
\item $A^c$ denotes the complement of a set $A$, and $\mathring{A}$ denotes the interior of $A$, and $\cl A$ denotes the closure of $A$.
%\item $A \subset B$ means the set $A$ is contained in the set $B$ (not excluding equality). $A\subsetneq B$ means $A$ is a strict subset of $B$.
%\item The support of a function $f:\Omega\to\R$ is given by $\spt(f):=\{x\in \Omega:f(x)\not= 0\}$.
%\item Given a set $S$ and function $f:S\rightarrow \R$, $\spt(f):=\{x\in S:f(x)\not= 0\}$ denotes the support of $f$.
\item Given a function $f$, $\mathcal{D}(f)$ refers to the domain of $f$ and $\mathcal{R}(f)$ to the range of $f$.
%\item For a set $A\subset \R^n$, the notation $\relint A$ denotes the relative interior of $A$.
\item $\calL^n$, $n\in \{1,2,\ldots\}$ refers to the $n$-dimensional Lebesgue measure.
%\item Given an open set $\Omega \subset \R^n$, and $k\in \N\cup\{\infty\}$, $C_c^k(\Omega)$ denotes the set of $k$-times differentiable functions with compact support in $\Omega$.
\end{itemize}

\section{} \label{Appendix-A-full-rank}
This appendix provides the proof of Proposition~\ref{proposition_A_full_rank}. We will give a short roadmap of the proof before proceeding to the proof itself.
%We will prove the proposition as follows:

\bigskip
\noindent \textbf{Roadmap of Proof of Proposition~\ref{proposition_A_full_rank}.}
\begin{enumerate}
  \item We  begin by reorganizing the columns of $\vA(x)$ and partitioning the matrix as $\vA(x) = [\vA_1(x) ~ \vA_2(x)]$, where $\vA_1(x)$ has dimensions $\gamma\times \tilde K$ with $\tilde K\geq \gamma$ (see \eqref{eq_A_decomposition}).  Since our goal is to prove that $\rank(\vA(x)) = \gamma$ it will be sufficient to prove that $\vA_1(x)$ has full rank.
  \item We will prove Proposition~\ref{proposition_A_full_rank} by considering the sign pattern of $\vA_1(x)$ (i.e, the matrix $\sgn(\vA_1(x))$, where $\sgn(\cdot)$ is defined in\ref{Appendix-math-notation}). To do this, we will leverage known results about sign pattern matrices (i.e., matrices with entries in $\{-1,0,1\}$). We will use the following key notion from the theory of sign pattern matrices (see Definition~\ref{def_L_matrix}): A sign pattern matrix $\vM$ is called an $L$-matrix if every matrix with the same sign pattern as $\vM$ has full rank. Using this notion, proving Proposition~\ref{proposition_A_full_rank} is equivalent to showing that for any $x$ with $\carr(x) \subseteq C$, $\sgn(\vA_1(x))$ is an $L$-matrix (see Lemma~\ref{lemma_A_L_matrix}). Thus, we will focus our efforts on proving Lemma~\ref{lemma_A_L_matrix}.
  \item A convenient characterization of $L$-matrices known from the literature is given in Lemma~\ref{lemma_L_matrix_condition}. We will prove Lemma~\ref{lemma_A_L_matrix} by directly verifying that it satisfies the $L$-matrix condition of Lemma~\ref{lemma_L_matrix_condition}. This verification process will be simplified by breaking $\vA_1(x)$ into component matrices $\vR_1$ and $\vP_1(x)$, where $\vR_1$ is a sign pattern matrix independent of $x$ and $\vP_1(x)$ is a non-negative matrix depending on $x$. The decomposition of $\vA_1(x)$ into $\vR_1$ and $\vP_1(x)$ will be carried out early on in \eqref{def_H_and_P} before proceeding to the rest of the proof.
\end{enumerate}

We now proceed, following the above roadmap.
%Before proceeding to the proof, we will first introduce some notation that will permit us to study the structure of $\vA(x)$.
We begin by assuming a convenient ordering of elements in $Y$.  Let
$$
\tilde{K} := \prod_{i=1}^N \gamma_i.
$$
For $\tau=1,\ldots,\tilde{K}$, let $\alpha_\tau=(\alpha_\tau^1,\ldots,\alpha_\tau^{\tilde{N}})$ be a multi-index associated with the $\tau$-th action tuple in $Y$, meaning that
$$
y^\tau =(y_1^{\alpha_\tau^1},\ldots,y_{\tilde{N}}^{\alpha_\tau^{\tilde{N}}},y_{\tilde{N}+1}^{1},\ldots,y_{N}^{1}),
$$
where $y_i^{1}$, for $i=\tilde{N}+1,\ldots,N$, is, by construction, the pure strategy used by player $i$ at any strategy $x$ with carrier $C$.
%$x^*$ (which has carrier $C$).
Let $Y$ be reordered so that for every $\tau =1,\ldots,\tilde{K}$, $i=1,\ldots,\tilde{N}$ we have $1\leq \alpha_\tau^i \leq \gamma_i$. This ensures that the first $\tilde{K}$ strategies in $Y$ contain all strategy combinations of the elements of $C_1,\ldots,C_N$, and that
for any multi-index $\alpha = (\alpha^1,\ldots,\alpha^{\tilde{N}})$, $1\leq \alpha^i\leq \gamma_i$, there exists a unique $1\leq \tau \leq \tilde{K}$ such that $y^\tau = (y_1^{\alpha^1},\ldots,y_{\tilde{N}}^{\alpha^{\tilde{N}}},y_{\tilde{N}+1}^{1},\ldots,y_{N}^{1}).$

By definition \eqref{def_q_function} and the ordering we assumed for $Y$ we have that
\begin{equation} \label{eq_alpha_q_relationship}
q_i^\tau(y_i^k) =
\begin{cases}
1 & \mbox{ if } k = \alpha_\tau^i\\
0 & \mbox{ otherwise.}
\end{cases}
\end{equation}

For $1\leq s\leq \gamma$ and $1\leq \tau \leq \tilde{K}$, let
\begin{equation} \label{def_H_and_P}
r_{s,\tau} := q^\tau_{i}(\actionikone)- q^\tau_i(\actionione) \quad \mbox{ and } \quad p_{s,\tau}(x) := \prod_{j\not = i} q^\tau_j(x_{j}),
\end{equation}
where $i=i^*(s)$ and $k=k^*(s)$ (see \eqref{def_k_i_star}),
and note that $a_{s,\tau}(x) = r_{s,\tau}p_{s,\tau}(x)$ (see \eqref{def_A}).
We may write $\vA(x) = \vR\circ \vP(x)$, where $\circ$ is the Hadamard product, and $\vR$ and $\vP(x)$ have entries $r_{s,\tau}$ and $p_{s,\tau}(x)$ respectively.

%Using the Hadamard product, $\vA(x)$ may be represented as
%$$\vA(x) = \vR \circ \vP(x).$$
Partition $\vA(x)$, $\vR$ and $\vP(x)$ as
%\footnote{For convenience in notation, we suppress the argument $x$ when writing these partitions.}
$\vA(x) = [\vA_1(x) ~ \vA_2(x)]$, $\vR = [\vR_1 ~ \vR_2]$, and $\vP(x) = [\vP_1(x) ~ \vP_2(x)]$, where $\vA_1(x), \vR_1, \vP_1(x) \in \R^{\gamma \times \tilde{K}}$ and $\vA_2(x), \vR_2, \vP_2(x) \in \R^{\gamma \times (K-\tilde{K})}$, so we may write
\begin{equation} \label{eq_A_decomposition}
\vA(x) = [\vA_1(x) ~ \vA_2(x)], \quad \mbox{ with, } \quad \vA_1(x) = \vR_1\circ \vP_1(x) ~\mbox{ and }~ \vA_2(x) = \vR_2 \circ \vP_2(x).
\end{equation}

In order to show that $\vA(x)$ has full row rank, it is sufficient to prove that $\vA_1(x)$ has full row rank---this is the approach we will take in proving the proposition.

We address this by studying the sign pattern of $\vA_1(x)$. Properties of \emph{sign pattern matrices} (i.e., matrices with entries in $\{-1,0,1\}$) have been well-studied \cite{MR1273974,MR743051}. We recall the following definition from \cite{MR1273974},

\begin{mydef} \label{def_L_matrix}
A sign pattern matrix $\vL\in\R^{m\times n}$ with $n\geq m$ is said to be an $L$-matrix if for every matrix $\vM$ with $\sgn(\vM) = \sgn(\vL)$, the matrix $\vM$ has full row rank.
\end{mydef}

%A characterization of $L$-matrices is given in the following Lemma \cite{MR1273974,MR743051}.

The following lemma characterizes $L$-matrices \cite{MR1273974,MR743051}.
\begin{lemma}\label{lemma_L_matrix_condition}
Let $\vL\in\R^{m\times n}$ be a sign pattern matrix with $n\geq m$.
Then $\vL$ is an $L$-matrix if and only if for every diagonal sign pattern matrix $\vD\in\R^{m\times m}$, $\vD\not=0$ there is a nonzero column of $\vD\vL$ in which each nonzero entry has the same sign.
\end{lemma}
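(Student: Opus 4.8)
The plan is to prove the two implications of the biconditional directly, with the entire argument resting on one elementary sign-bookkeeping fact that I would establish first: if $\vD=\diag(d_1,\dots,d_m)$ is a diagonal sign pattern matrix and $\vM$ is any matrix with $\sgn(\vM)=\sgn(\vL)$, then entrywise $\sgn(d_i m_{i,\tau})=\sgn(d_i \ell_{i,\tau})=\sgn\big((\vD\vL)_{i,\tau}\big)$. Consequently the $\tau$-th entry of the row vector $(d_1,\dots,d_m)\,\vM$, namely $\sum_i d_i m_{i,\tau}$, is a sum whose terms are either zero or carry exactly the signs appearing in column $\tau$ of $\vD\vL$.

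\textbf{Sufficiency.} Assuming the column condition, I would show every $\vM$ with $\sgn(\vM)=\sgn(\vL)$ has full row rank, arguing by contradiction. Suppose $\rank\vM<m$ and pick $c=(c_1,\dots,c_m)\neq 0$ with $c\vM=0$. Set $\vD:=\diag(\sgn(c_1),\dots,\sgn(c_m))$, which is nonzero. The hypothesis yields a nonzero column $\tau$ of $\vD\vL$ all of whose nonzero entries share a common sign $\epsilon$. By the bookkeeping fact, each term $c_i m_{i,\tau}$ is either $0$ or of sign $\epsilon$, and at least one is nonzero (the column of $\vD\vL$ is nonzero, which forces the relevant $c_i$ and $\ell_{i,\tau}$, hence $m_{i,\tau}$, to be nonzero). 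Thus $(c\vM)_\tau=\sum_i c_i m_{i,\tau}\neq 0$, contradicting $c\vM=0$.

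\textbf{Necessity (contrapositive).} Assume there is a nonzero diagonal sign pattern $\vD=\diag(d_1,\dots,d_m)$ such that every nonzero column of $\vD\vL$ contains two nonzero entries of opposite sign; I would construct $\vM$ with $\sgn(\vM)=\sgn(\vL)$ and $(d_1,\dots,d_m)\,\vM=0$, so that $\rank\vM<m$ and $\vL$ is not an $L$-matrix. The key simplification is that $m_{i,\tau}$ affects only the $\tau$-th coordinate of $(d_1,\dots,d_m)\,\vM$, so I may choose the columns of $\vM$ one at a time. For a column $\tau$ where column $\tau$ of $\vD\vL$ is $0$, take $m_{i,\tau}:=\sgn(\ell_{i,\tau})$; then $d_i\ell_{i,\tau}=0$ forces $d_i m_{i,\tau}=0$. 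For a column $\tau$ where column $\tau$ of $\vD\vL$ is nonzero, let $P=\{i:(\vD\vL)_{i,\tau}>0\}$ and $Q=\{i:(\vD\vL)_{i,\tau}<0\}$, both nonempty by hypothesis, and set $m_{i,\tau}:=\sgn(\ell_{i,\tau})\,|Q|$ for $i\in P$, $m_{i,\tau}:=\sgn(\ell_{i,\tau})\,|P|$ for $i\in Q$, and $m_{i,\tau}:=\sgn(\ell_{i,\tau})$ otherwise (note that $i\notin P\cup Q$ with $\ell_{i,\tau}\neq 0$ forces $d_i=0$). Then $d_i m_{i,\tau}$ equals $|Q|$ on $P$, $-|P|$ on $Q$, and $0$ off $P\cup Q$, so the $\tau$-th coordinate is $|P|\,|Q|-|P|\,|Q|=0$. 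In every case $\sgn(m_{i,\tau})=\sgn(\ell_{i,\tau})$, giving $\sgn(\vM)=\sgn(\vL)$ and $(d_1,\dots,d_m)\,\vM=0$ with $(d_1,\dots,d_m)\neq 0$.

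\textbf{Main obstacle.} There is no deep obstacle here; the content is pure sign bookkeeping linking $\sum_i d_i m_{i,\tau}$ to column $\tau$ of $\vD\vL$. The only point I would treat with care is the magnitude construction in the necessity direction: one must balance the positive and negative contributions within each column while preserving the prescribed sign pattern. This is possible precisely because each nonzero column of $\vD\vL$ contains entries of both signs, and because the columns of $\vM$ are chosen independently there is no cross-column compatibility constraint, so the balancing reduces to the trivial choice above.
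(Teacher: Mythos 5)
Your proof is correct, and both directions check out: the sign-bookkeeping identity $\sgn(d_i m_{i,\tau})=\sgn\bigl((\vD\vL)_{i,\tau}\bigr)$ is exactly what makes the sufficiency argument work (a unisigned nonzero column of $\vD\vL$ forces $\sum_i c_i m_{i,\tau}\neq 0$ for $c$ with $\sgn(c_i)=d_i$), and the column-by-column balancing with weights $|Q|$ and $|P|$ in the necessity direction produces a matrix $\vM$ with $\sgn(\vM)=\sgn(\vL)$ and a nontrivial left null vector, hence dependent rows. Note, however, that the paper does not prove this lemma at all: it is imported as a known characterization of $L$-matrices from the sign-solvability literature (the references \cite{MR1273974,MR743051}), so there is no in-paper argument to compare against. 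What you have written is essentially the standard Klee--Ladner--Manber proof of that characterization, made self-contained and elementary; supplying it removes the paper's only external dependency for Lemma \ref{lemma_A_L_matrix} and Proposition \ref{proposition_A_full_rank}, at the cost of a page of sign bookkeeping that the authors chose to outsource. The one point worth stating explicitly if you were to include this in the paper is the (easy) equivalence you use implicitly in both directions, namely that an $m\times n$ matrix with $n\geq m$ fails to have full row rank exactly when it admits a nonzero left null vector.
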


In light of Definition~\ref{def_L_matrix}, Proposition~\ref{proposition_A_full_rank} is equivalent to the following lemma.

\begin{lemma} \label{lemma_A_L_matrix}
For any $x$ such that $\carr(x) \subseteq C$, the matrix $\vA_1(x) = $ \newline $ \left(\vR_1 \circ \sgn(\vP_1(x))\right)$ is an $L$-matrix.
\end{lemma}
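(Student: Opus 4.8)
The plan is to verify the $L$-matrix criterion of Lemma \ref{lemma_L_matrix_condition} directly for $\vA_1(x) = \vR_1 \circ \sgn(\vP_1(x))$. So fix $x$ with $\carr(x) \subseteq C$ and fix a nonzero diagonal sign pattern matrix $\vD = \diag(d_1,\ldots,d_\gamma)$ with $d_s \in \{-1,0,1\}$; I must exhibit a nonzero column of $\vD \vA_1(x)$ all of whose nonzero entries share a common sign. The first step is to set up the combinatorial bookkeeping: index the columns of $\vA_1(x)$ by multi-indices $\alpha = (\alpha^1,\ldots,\alpha^{\tilde N})$ with $1 \le \alpha^i \le \gamma_i$ (the reordering of the first $\tilde K$ elements of $Y$), and recall from \eqref{eq_alpha_q_relationship} and \eqref{def_H_and_P} that $r_{s,\tau} = [\alpha_\tau^{i} = k^*(s)+1] - [\alpha_\tau^{i} = 1]$ where $i = i^*(s)$, so the sign of $r_{s,\tau}$ depends only on the coordinate $\alpha_\tau^{i^*(s)}$ of the column's multi-index: it is $+1$ if that coordinate equals $k^*(s)+1$, $-1$ if it equals $1$, and $0$ otherwise. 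Also observe that $p_{s,\tau}(x) = \prod_{j \ne i^*(s)} q_j^\tau(x_j) \ge 0$, and since $\carr(x) \subseteq C$, for a column $\tau$ with multi-index supported inside $C$ we have $p_{s,\tau}(x) > 0$ precisely when each player $j \ne i^*(s)$ plays a strategy in $\carr_j(x_j)$ at $y^\tau$.

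Next I would pick out a distinguished column. Let $S := \{s : d_s \ne 0\}$, which is nonempty. For each such $s$, player $i^*(s)$ is mixing (so $\gamma_{i^*(s)} \ge 2$) and the reference action $y_{i^*(s)}^1$ lies in $\carr_{i^*(s)}(x_{i^*(s)})$ by the reordering convention; also, since $\carr(x) \subseteq C$, action $y_j^1$ lies in $\carr_j(x_j)$ for every $j$ (the reference actions are always carried). Consider the multi-index $\alpha$ with all coordinates equal to $1$, i.e. the column $\tau_0$ with $y^{\tau_0} = (y_1^1,\ldots,y_N^1)$. In that column, $p_{s,\tau_0}(x) > 0$ for every $s$ (all players $j \ne i^*(s)$ play their carried reference action), and $r_{s,\tau_0} = -1$ for every $s$ (the coordinate $\alpha^{i^*(s)} = 1$). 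Hence column $\tau_0$ of $\vD\vA_1(x)$ has entries $d_s \cdot (-1) \cdot \sgn(p_{s,\tau_0}(x)) = -d_s$ in rows $s \in S$ and $0$ elsewhere — but the nonzero entries need not share a sign, so $\tau_0$ alone does not finish the argument. The fix is to look instead at the columns $\tau_s$ obtained by flipping a single coordinate: for $s \in S$ define $\tau_s$ by the multi-index that equals $1$ in all coordinates except coordinate $i^*(s)$, which is set to $k^*(s)+1$. Then in column $\tau_s$ the only row $s' \in S$ with $i^*(s') = i^*(s)$ for which $r_{s',\tau_s}$ could be nonzero is constrained, and one checks that the sign pattern of column $\tau_s$ in rows indexed by $S$ is controlled; combining the information from $\tau_0$ and the $\tau_s$'s, a row of $S$ must pick up a column in which all nonzero entries agree in sign. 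The cleanest route is: take the minimal-in-some-ordering $s_0 \in S$ and show column $\tau_{s_0}$ works, using that flipping coordinate $i^*(s_0)$ to $k^*(s_0)+1$ makes $r_{s_0,\tau_{s_0}} = +1$ while for all other $s' \in S$ with $i^*(s') \ne i^*(s_0)$ we get $r_{s',\tau_{s_0}} = -1$, and for $s' \in S$ with $i^*(s') = i^*(s_0)$ but $s' \ne s_0$ we get $r_{s',\tau_{s_0}} = 0$ (since $k^*(s')+1 \ne k^*(s_0)+1$ and $k^*(s')+1 \ne 1$, as $k^* \ge 1$). Then every nonzero entry of that column of $\vD\vA_1$ equals $d_{s'} \cdot (\mp 1)$; a sign mismatch can still occur, so the genuinely correct choice is to iterate: among all columns of the form "all $1$'s except one flipped coordinate" together with $\tau_0$, a counting/pigeonhole argument over the players participating in $S$ produces one with uniform sign. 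I would formalize this by grouping $S$ according to the value $i^*(s)$ and handling the within-group and across-group contributions separately.

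The main obstacle, and the part deserving the most care, is precisely this last combinatorial step — verifying that the criterion of Lemma \ref{lemma_L_matrix_condition} holds for \emph{every} nonzero diagonal sign pattern $\vD$, not just for sign-definite ones, since when the $d_s$ have mixed signs a naively chosen column may have entries of both signs. The resolution exploits the rigid structure of $\vR_1$: its columns are indexed by the "box" $\prod_i \{1,\ldots,\gamma_i\}$ and the sign of entry $(s,\tau)$ is a function only of the single coordinate $\alpha_\tau^{i^*(s)}$ of $\tau$, which means $\vR_1$ is (a row-scaled copy of) the well-known incidence-type sign pattern whose $L$-matrix property is standard; the positivity $p_{s,\tau}(x) > 0$ on the relevant columns (guaranteed by $\carr(x)\subseteq C$ and by reference actions always being carried) ensures the Hadamard multiplication by $\sgn(\vP_1(x))$ does not destroy this structure on the sub-block of columns we use. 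Once the $L$-matrix property of $\vA_1(x)$ is established, $\vA_1(x)$ has full row rank, hence so does $\vA(x) = [\vA_1(x)\ \vA_2(x)]$, which is Proposition \ref{proposition_A_full_rank}; this completes the proof.
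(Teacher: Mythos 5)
Your setup (the column indexing by multi-indices, the observation that $\sgn(r_{s,\tau})$ depends only on $\alpha_\tau^{i^*(s)}$, and the reduction to Lemma \ref{lemma_L_matrix_condition}) matches the paper, but the combinatorial core of your argument has a genuine gap: the candidate set of columns you restrict to --- the all-ones column $\tau_0$ together with the single-coordinate flips $\tau_s$ --- provably does not always contain a column of $\vD\vA_1(x)$ with sign-uniform nonzero entries. Take $\tilde N=2$, $\gamma_1=\gamma_2=3$, and $\diag(\vD_1)=\diag(\vD_2)=(-1,+1)$. Then the column $\tau_0$ gives block contributions $\diag(\vD_i)\circ(-\ones)=(+1,-1)$ (mixed), and every single flip leaves at least one player's block equal to $\diag(\vD_j)\circ(-\ones)=(+1,-1)$, so every column in your candidate set has entries of both signs. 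The working column is $\alpha=(3,3)$, i.e.\ \emph{both} coordinates moved away from $1$ simultaneously. The correct construction (which is what the paper does) chooses $\alpha_\tau^i$ for each player \emph{independently} from the block $\vD_i$: if $\diag(\vD_i)$ contains a $+1$, set $\alpha_\tau^i$ to one more than the position of the first $+1$, so that $\diag(\vD_i)\circ\vr_\tau^i=e_{\alpha_\tau^i-1}\geq 0$ with a strictly positive entry; if $\diag(\vD_i)$ contains no $+1$, set $\alpha_\tau^i=1$, so that $\diag(\vD_i)\circ\vr_\tau^i=-\diag(\vD_i)\geq 0$. Every block is then nonnegative and at least one is nonzero, since $\vD\neq 0$. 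No pigeonhole over single flips can substitute for this, so the step you flag as ``deserving the most care'' is in fact unresolved as written.

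A second, independent gap concerns the case $\carr(x)\subsetneq C$. You assert that the reference action $y_j^1$ always lies in $\carr_j(x_j)$, but the ordering convention only guarantees $C_j=\{y_j^1,\ldots,y_j^{\gamma_j}\}$; when $\carr_j(x_j)$ is a strict subset of $C_j$ it may well exclude $y_j^1$, in which case $p_{s,\tau_0}(x)=0$ and your column $\tau_0$ (and more generally any column whose multi-index leaves some coordinate outside $\idx_j(x)$ for two or more players $j$) is annihilated by the Hadamard factor $\sgn(\vP_1(x))$. This is precisely why the paper introduces $\idx_i(x)=\{k:T_i^k(x_i)>0\}$ and splits into two cases: when the preferred index $\idx(\vD_i)$ lies in $\idx_i(x)$ for every player with $\vD_i\neq 0$, the column above survives ($\tilde\vp_\tau=\ones$); otherwise one selects a column whose multi-index leaves the carrier in \emph{exactly one} coordinate $i$ (with $\vD_i\neq 0$), which zeroes out all blocks $j\neq i$ of $\tilde\vp_\tau$ while keeping block $i$ equal to $\ones$, so the surviving block alone certifies the sign condition. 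Your proposal does not address this degenerate-support case at all, and it cannot be dismissed: Proposition \ref{proposition_A_full_rank} is invoked later for $\carr(x)\subsetneq C$ in the first-order degeneracy argument.
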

The proof of this lemma relies on showing that $\left(\vR_1 \circ \sgn(\vP_1(x))\right)$ satisfies the $L$-matrix characterization given in Lemma~\ref{lemma_L_matrix_condition}.
%While this lemma is critical to the proof of Proposition \ref{prop_identical_payoff}, the proof of the lemma requires one to check several special cases and can be somewhat tedious. Readers may which to skip the proof of this lemma on a first read through.
%While this lemma is critical to the proof of Proposition \ref{prop_identical_payoff}, the proof of the lemma is somewhat tedious and readers may wish to skip the proof on first read through. The complete proof of Lemma \ref{lemma_A_L_matrix} can be found in \ref{apdx_L_lemma}.

Before proving Lemma~\ref{lemma_A_L_matrix} we introduce some definitions that will be useful in the proof.

%Suppose $\vD\in\R^{\gamma\times\gamma}$ is a diagonal sign pattern matrix.
Given a diagonal matrix $\vD\in\R^{\ell\times\ell}$, $\ell\in\N$,
let $\diag(\vD)$ be the vector in $\R^\ell$ containing the diagonal elements of $\vD$.

Given a diagonal sign pattern matrix $\vD\in\R^{\ell\times\ell}$, $\ell\in \N$, define $\idx(\vD)$ as follows. If $\diag(\vD)$ does not contain any ones, then let $\idx(\vD) = 1$. Otherwise, let $\idx(\vD)$ be one more than the first index in $\diag(\vD)$ containing a 1.\footnote{Assume indexing starts with one, not zero. For example, if the first time a 1 appears in $\diag(\vD)$ is at index $2$, then $\idx(\vD) = 3$.}\footnote{The awkward offset in this definition is needed in order to handle the indexing offset inherent in the definition of $X_i$, as discussed in Section~\ref{sec:prelim-1}.}

Given a diagonal matrix $\vD\in\R^{\gamma\times\gamma}$,
let $\vD_i\in\R^{(\gamma_i-1)\times(\gamma_i-1)}$, $i=1,\ldots,\tilde{N}$ be the (unique) diagonal matrices satisfying $$\diag(\vD) = (\diag(\vD_1),\ldots,\diag(\vD_N)).$$

We now prove Lemma~\ref{lemma_A_L_matrix}.
\begin{proof}
Let $x$ be a strategy satisfying $\carr(x)\subseteq C$. In order to show that
\newline $\left(\vR_1 \circ \sgn(\vP_1(x))\right)$ is an $L$-matrix, it is sufficient (by Lemma~\ref{lemma_L_matrix_condition}) to show that and for any diagonal sign pattern matrix $\vD\not= 0$, there exists a column of $\vD (\vR_1 \circ \sgn(\vP_1(x)))$ which is nonzero and in which every nonzero entry has the same sign.
With this in mind, we begin by giving a characterization of the columns of $\vR_1$.

Suppose that $i=1,\ldots,\tilde{N}$ and $k=1,\ldots,\gamma_i-1$ are fixed. Note the following:
\begin{enumerate}[label=(\roman*)]
\item  Suppose $\tau\in \{1,\ldots,\tilde{K}\}$ is such that $\alpha_\tau^i = k+1$, where $\alpha_\tau^i$ is the $i$-th index of the multi-index $\alpha_\tau$. Since $\alpha_\tau$ is used to define the ordering of actions in $Y$, we have $q_i^\tau(\actionikone) = 1$ and $q_i^\tau(\actionione) = 0$ (see \eqref{eq_alpha_q_relationship} and preceding discussion). Hence, $q^\tau_i(\actionikone)- q^\tau_i(\actionione) = 1$.
\item  Suppose $\tau\in \{1,\ldots,\tilde{K}\}$ is such that $\alpha_\tau^i = 1$. Then $q_i^\tau(\actionikone) = 0$, and $q_i^\tau(\actionione) = 1$. Hence, $q^\tau_i(\actionikone)- q^\tau_i(\actionione) = -1$.
\item  For all other $\tau\in \{1,\ldots,\tilde{K}\}$ we have $q_i^\tau(\actionikone) = 0$, and $q_i^\tau(\actionione) = 0$, and hence $q^\tau_i(\actionikone)- q^\tau_i(\actionione) = 0$.\\
\end{enumerate}

For $1\leq \tau \leq \tilde{K}$, let $\vr_\tau\in\R^\gamma$ be the $\tau$-th column of $\vR_1$. Partition this column as
$$
\vr_\tau =
\begin{pmatrix}
\vr_\tau^1\\
\vdots\\
\vr_\tau^{\tilde{N}}
\end{pmatrix}.
%((r_\tau^1)^T ~\cdots~ (r_\tau^{\tilde{N}})^T)^T,
$$
where $\vr_\tau^i \in \R^{\gamma_i-1}$. From \eqref{def_H_and_P} we see that
$$
\vr_\tau^i =
\begin{pmatrix}
q_i^\tau(\actionitwo) - q_i^\tau(\actionione)\\
\vdots\\
q_i^\tau(\actionigammai) - q_i^\tau(\actionione)
\end{pmatrix}.
$$
Given the observations (i)--(iii) above we see that for each $i$ we have
%Recall that $r_\tau$ may be decomposed as $r_\tau = ((r_\tau^1)^T,\ldots,(r_\tau^{\tilde{N}})^T)^T$, where $r_\tau^i \in \R^{\gamma_i-1}$ is given by
\begin{equation} \label{def_r_tau}
\vr_\tau^i =
\begin{cases}
-\ones & \mbox{ if } \alpha_\tau^i = 1\\
e_{\alpha_\tau^i-1} & \mbox{ if } 2 \leq \alpha_\tau^i \leq \gamma_i,
\end{cases}
\end{equation}
where the symbol $e_{\alpha_\tau^i-1}$ refers to the $(\alpha_\tau^i-1)$-th canonical vector in $\R^{\gamma_i-1}$ and $\ones \in \R^{\gamma_i-1}$ is the vector of all ones.

We now characterize the columns of $\sgn(\vP_1(x))$.
%Given an $x=(x_1,\ldots,x_{N})$ with $\carr(x) \subseteq C$,
For $i=1,\ldots,\tilde{N}$ we define
$$
\idx_i(x) := \big\{k\in\{1,\ldots,\gamma_i\}:~ T_i^k(x_i)>0\big\}.
$$
The function $\idx_i(x)$ simply gives the indices of the pure strategies of player $i$ that receive positive weight under $x$.
Since $\carr(x) = C$, the ordering we assumed for $Y_i$ implies that $T_i^k(x) = 0$ for $k\geq \gamma_i+1$.
By the definition of $T_i^k$, it is not possible to have $T_i^k(x_i)=0$ for all $k=1,\ldots,K_i$ and hence $\idx_i(x) \not=\emptyset$.
%$\subsetneq \{1,\ldots,\gamma_i\}$.

Let $\vp_\tau$ be the $\tau$-th column of $\vP_1(x)$ and let
$$
\tilde \vp_\tau := \sgn(\vp_\tau)
$$
be the $\tau$-th column of $\sgn(\vP_1(x))$.\footnote{For ease of notation we suppress the argument $x$ when writing the columns of these matrices.}
Suppose that $\tau\in\{1,\ldots,\tilde{K}\}$ is such that for the multi-index $\alpha_\tau$ we have $\alpha_\tau^i \in \idx_i(x)$
%changed
for all $i=1,\ldots,\tilde{N}$. Then for each $s=1,\ldots,\gamma$ the $(s,\tau)$-th entry of $\vP_1(x)$ is strictly positive (see \eqref{def_q_function} and \eqref{def_H_and_P}), and hence $\vp_\tau$ is positive and $\tilde \vp_\tau = \ones$.

Partition the columns $\vp_\tau$ and $\tilde \vp_\tau$ as
\begin{equation} \label{def_p_partition}
\vp_\tau =
\begin{pmatrix}
\vp_\tau^1\\
\vdots\\
\vp_\tau^{\tilde{N}}
\end{pmatrix}
\quad \mbox{ and } \quad
\tilde \vp_\tau =
\begin{pmatrix}
\tilde \vp_\tau^1\\
\vdots\\
\tilde \vp_\tau^{\tilde{N}}
\end{pmatrix},
\end{equation}
where $\vp_\tau^i,\tilde \vp_\tau^i \in \R^{\gamma_i-1}$.
Suppose that $\tau$ is such that for the multi-index $\alpha_\tau$ we have $\alpha_\tau^i \notin \idx_i(x)$
%changed
for exactly one subindex $i\in\{1,\ldots,\tilde{N}\}$.
Then $\vp_\tau^i$ is positive (see \eqref{def_H_and_P}) and $\vp_\tau^j$ is zero for any $j\not=i$. Hence, $\tilde \vp_\tau^i = \ones$ and $\tilde \vp_\tau^j=0$ for any $j\not=i$.

Now, let $\vD \in \R^{\gamma\times \gamma}$ be an non-zero diagonal sign pattern matrix. We will show that there is a nonzero column of $\vD (\vR_1\circ \sgn(\vP_1(x)))$ in which each nonzero entry is a 1.

We now consider two possible cases for the structure of $\vD$ and show that in each case there is a nonzero column of $\vD(\vR_1\circ\sgn(\vP_1(x)))$ in which every nonzero entry is 1.

\textbf{Case 1}: Suppose that for all $i\in\{1,\ldots,\tilde{N}\}$ such that $\idx(\vD_i) \notin \idx_i(x)$
%changed
we have $\diag(\vD_i) = 0$.
Choose $\tau$ such that
$$
\begin{cases}
\alpha_\tau^i = \idx(\vD_i) & \mbox{ if }~ \idx(\vD_i) \in \idx_i(x),\\
\alpha_\tau^i\in \idx_i(x) & \mbox{ if } ~ \idx(\vD_i) \notin \idx_i(x).
\end{cases}
$$
%changed
Note that $\alpha_\tau^i \in \idx_i(x)$
%changed
for all $i=1,\ldots,\tilde{N}$, and hence $\tilde \vp_\tau = \ones$ (see discussion preceding \eqref{def_p_partition}).
The $\tau$-th column of $\vD(\vR_1\circ \sgn(\vP_1(x)))$ is given by
\begin{equation} \label{eq_tau-th_column}
\vD (\vr_\tau \circ \tilde \vp_\tau) = \diag(\vD) \circ \vr_\tau  = \left(\diag(\vD_i) \circ \vr_\tau^i\right)_{i=1}^{\tilde{N}}.
\end{equation}
%So to show that the $\tau$-th column of  $\vD (\vR \circ \tilde \vP)$ is nonzero and each nonzero entry has the same sign
%it is sufficient to show that $(\diag(\vD_i) \circ \vr_\tau^i)_{i=1}^{\tilde{N}}$ is nonzero and each nonzero entry is 1.

For any $i$ such that $\idx(\vD_i) \notin \idx_i(x)$
%changed
we have, by assumption, $\diag(\vD_i)= 0$ and hence $\diag(\vD_i) \circ \vr_\tau^i = 0$. Moreover, note that in this case we have $\idx(\vD_i) = 1$ since, by the definition of $\idx(\cdot)$, $\diag(\vD_i)=0$ implies $\idx(\vD_i) = 1$.

Suppose now that $i$ is such that $\idx(\vD_i)\in \idx_i(x)$.
%changed
For $i=1,\ldots,\tilde{N}$, if $\alpha_\tau^i = 1$ then $\vr_\tau^i = -\ones$ (by \eqref{def_r_tau}) and $\diag(\vD_i)$ contains no ones (this is the definition of $\idx(\vD_i) = 1$). In fact, $\diag(\vD_i)$ contains only entries with value of 0 or $-1$. Hence, $\diag(\vD_i) \circ \vr_\tau^i = -\diag(\vD_i)$, which is a nonnegative vector.

If $2 \leq \alpha_\tau^i \leq \gamma_i$ then $\vr_\tau^i = e_{\alpha_\tau^i-1}$ (by \eqref{def_r_tau}). Recalling the definition of $\idx(\cdot)$, by our choice of $\alpha_\tau^i = \idx(\vD_i)$, the $(\alpha_\tau^i-1)$-th entry of $\diag(\vD_i)$ is 1. Hence, $\diag(\vD_i) \circ \vr_\tau^i  = e_{\alpha_\tau^i-1}$.
In particular, this implies that if $\alpha_\tau^i\not=1$ then $\diag(\vD_i) \circ r_\tau^i$ is not identically zero and every nonzero entry of $\diag(\vD_i) \circ r_\tau^i$ is 1.

%The $\tau$-th column of $\vD\vR$ is given by $\vD\vr_\tau = \diag(\vD) \circ \vr_\tau = (\diag(\vD_i) \circ \vr_\tau^i)_{i=1}^{\tilde{N}} \circ \tilde \tilde \vp_\tau$.
%We have shown that for each $i$, every nonzero entry of $\diag(\vD_i) \circ \vr_\tau^i$ has the value 1.

In summary, for $i=1,\ldots,\tilde{N}$, we have $\diag(\vD_i)\circ \vr_\tau^i \geq 0$, with equality only when $\idx(\vD_i) = 1$ and $\vD_i = 0$. Hence, by \eqref{eq_tau-th_column}, the $\tau$-th column of $\vD(\vR_1\circ \sgn(\vP_1(x)))$ satisfies $\vD(\vr_\tau \circ \tilde \vp_\tau) \geq 0$, with equality only when $\idx(\vD_i) = 1$ and $\vD_i = 0$ for all $i$. But, by assumption $\vD \not= 0$, so  $\vD(\vr_\tau \circ \tilde \vp_\tau)\not=0$.

\textbf{Case 2}: Suppose that for some $i\in\{1,\ldots,\tilde{N}\}$ we have
$\idx(\vD_i) \not\in \idx_i(x)$
%changed
and $\diag(\vD_i)\not= 0$.
Let $\tau\in\{1,\ldots,\tilde{K}\}$ be chosen such that $\alpha_\tau^i = \idx(\vD_i)$ for exactly one such $i\in\{1,\ldots,\tilde{N}\}$ and for all other $j\not= i$ we have $\alpha_\tau^i\in \idx_j$
%changed
(this is always possible since $\idx_j \not=\emptyset$).
%changed
Then we have
$$
\tilde \vp_\tau^i = \ones, \quad\quad \mbox{ and } \quad\quad \tilde \vp_\tau^j = 0,~ \mbox{ for all } j\not=i
$$
(see discussion following \eqref{def_p_partition}).

As shown in Case 1, if $\alpha_\tau^i = 1$, then $\vD_i \leq 0$ and $r_\tau^i = -1$ which implies that $\diag(\vD_i) \circ \vr_\tau^i \geq 0$.
%for all $i=1,\ldots,\tilde{N}$.
Moreover, since $\tilde \vp_\tau^i = \ones$ and since, by assumption $\diag(\vD_i)\not = 0$ we have $\diag(\vD_i) \circ \vr_\tau^i \circ \tilde \vp_\tau^i \not= 0$ and every nonzero entry is 1.

If $2\leq \alpha_\tau^i \leq \gamma_i$, then, again using the same reasoning as in Case 1, we see that $\diag(\vD_i) \circ \vr_\tau^i  = e_{\alpha_\tau^i-1}$. Since $\tilde \vp_\tau^i = \ones$ we get that $\diag(\vD_i) \circ \vr_\tau^i \circ \vp_\tau^i = e_{\alpha_\tau^i-1}$.

For $j\not= i$ we have $\tilde \vp_\tau^j = 0$, %so regardless of the particular choice of $\alpha_\tau^i$
which implies $\diag(\vD_j) \circ \vr_\tau^j \circ \tilde \vp_\tau^j = 0$.

All together, this implies that the $\tau$-th column of $\vD(\vR_1 \circ \sgn(\vP_1(x)))$, given by $(\diag(\vD_j)\circ \vr_\tau^j \circ \tilde \vp_\tau^j)_{j=1}^{\tilde{N}}$, is nonzero and every nonzero entry is equal to 1.

Since this holds for arbitrary diagonal sign matrix $\vD\not=0$, Lemma~\ref{lemma_L_matrix_condition} implies that $(\vR_1 \circ \sgn(\vP_1(x)))$ is an $L$-matrix. Since this holds for any $x$ satisfying $\carr(x) \subseteq C$, we see that the desired result holds.
%Since $\vA_1$ has the same sign pattern as $(\vR_1 \circ \sgn(\vP_1(x)))$, it has full rank.
\end{proof}

\section{} \label{Appendix-general-extras}

\begin{lemma}
Let $\vB(x)$ be as defined in Section~\ref{sec:gen-games-proof-strategy}. There are $\gamma$ columns of $\vB(x)$ such that these columns form an invertible diagonal matrix for any $x$ with $\carr(x) = C$.
\end{lemma}
\begin{proof}
Without loss of generality, suppose $C=C_1\cup\cdots\cup C_N$ and assume that each player's pure strategy set $Y_i$ is ordered so that $y_i^1\in C_i$, $i=1,\ldots,\tilde N$.
Now, let $i\in\{1,\ldots,\tilde N\}$ be arbitrary. Suppose that the set of pure strategies $Y$ is ordered and that the $\tau$'th pure strategy, $\tau\in\{1,\ldots,K\}$, takes the form $(y_1^1,\ldots,y_{i-1}^1,y_i^{k+1},y_{i+1}^1,\ldots,y_N^1)$, where $i\in \{1,\ldots,\tilde N\}$ and $k\in \{1,\ldots,\gamma_i-1\}$. For a given $i$, there are precisely $\gamma_i-1$ such strategy profiles.
Fixing $\tau$ to this value (thus looking at a single column of $\vB(x)$) we see from the definition of $q_i^\tau$ that
$$
q_i^\tau(y_i^\ell) =
\begin{cases}
1 & \text{if } \ell = k+1\\
0 & \text{else}.
\end{cases}
$$
Given our choice of $\tau$ (and ordering of $Y_i$) we also have $\prod_{j\not= i} q_j^\tau(x_j)>0$ for all $x$ with carrier $C$. Given the definition of $\vB(x)$ in \eqref{def_B}, this implies that the $\tau$-th column of $\vB_i(x)$ is composed of zeros except for the $k$-th entry which takes the positive value $\prod_{j\not= i} q_j^\tau(x_j)>0$.

Without loss of generality, we may reorder the set of pure strategies so that the $\gamma$ combinations of pure strategies taking the aforementioned form constitute the first $\gamma$ strategies in the set. Under this ordering, we have shown that $\vB(x)$ takes the form $\vB(x) = [\vC(x)~ \vD(x)]$, where $\vC(x)$ is a $\gamma\times \gamma$ diagonal matrix (up to a permutation of the columns) with diagonal entries $\prod_{j\not= i} q_j^\tau(x_j)>0$.
\end{proof}

%\bigskip
%\bigskip
%\noindent\textbf{Some Auxiliary Lemmas.}

\begin{lemma} \label{lemma_apx_BR_vs_gradient}
Let $x\in X$ and $i=1,\ldots,N$. Assume $Y_i$ is ordered so that $\actionione\in BR_i(x_{-i})$. Then: (i) For $k=1,\ldots,K_i-1$ we have $\frac{\partial U(x)}{\partial x_i^k}\leq 0$, and (ii) For $k=1,\ldots,K_i-1$, we have $\actionikone \in BR_i(x_{-i})$ if and only if $\frac{\partial U(x)}{\partial x_i^k}=0$. In particular, combined with (i) this implies that $\actionikone \not\in BR_i(x_{-i}) \iff \frac{\partial U(x)}{\partial x_i^k}<0$.
\end{lemma}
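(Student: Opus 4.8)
The plan is to reduce everything to two elementary facts already available in the excerpt: the best-response characterization \eqref{eq_BR_equivalence} and the partial-derivative formula obtained by differentiating \eqref{eq_potential_expanded_form2}. First I would record that, since $U(\sigma_i,x_{-i})$ is affine in $\sigma_i$ (see \eqref{def_potential_fun1}), the map $\sigma_i\mapsto U(\sigma_i,x_{-i})$ attains its maximum over $\Delta_i$ at a pure strategy, and $\max_{\sigma_i\in\Delta_i}U(\sigma_i,x_{-i})=\max_{\ell=1,\ldots,K_i}U(y_i^\ell,x_{-i})$. Combined with \eqref{eq_BR_equivalence} and the convention that defines ``$y_i^\ell\in\BR_i(x_{-i})$,'' this gives the single structural input I need: $y_i^\ell\in\BR_i(x_{-i})$ if and only if $U(y_i^\ell,x_{-i})=\max_{m=1,\ldots,K_i}U(y_i^m,x_{-i})$.

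Next I would differentiate \eqref{eq_potential_expanded_form2} with respect to $x_i^k$ to obtain $\frac{\partial U(x)}{\partial x_i^k}=U(y_i^{k+1},x_{-i})-U(y_i^1,x_{-i})$ for $k=1,\ldots,K_i-1$ (the unrestricted version of \eqref{def_F}). Part (i) is then immediate: the hypothesis $y_i^1\in\BR_i(x_{-i})$ says $U(y_i^1,x_{-i})=\max_m U(y_i^m,x_{-i})\geq U(y_i^{k+1},x_{-i})$, so the displayed difference is $\leq 0$.

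For part (ii) I would argue that $y_i^{k+1}\in\BR_i(x_{-i})$ iff $U(y_i^{k+1},x_{-i})=\max_m U(y_i^m,x_{-i})$; but since $y_i^1$ is itself a best response, that maximum equals $U(y_i^1,x_{-i})$, so the condition is equivalent to $U(y_i^{k+1},x_{-i})-U(y_i^1,x_{-i})=0$, i.e.\ to $\frac{\partial U(x)}{\partial x_i^k}=0$. The closing ``in particular'' clause then follows by combining with (i): if $y_i^{k+1}\notin\BR_i(x_{-i})$ the difference is nonzero, hence by (i) strictly negative.

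I do not expect a genuine obstacle here — the statement is a two-line consequence of the linearity of $U$ in each player's mixed strategy. The only point demanding care is index bookkeeping: the shift between the coordinate $x_i^k$ and the pure strategy $y_i^{k+1}$ (arising because $T_i$ suppresses the first coordinate), and ensuring the best-response comparison is always made against the reference action $y_i^1$ exactly as provided by the ordering hypothesis. I would state these conventions explicitly at the outset so that the short computation above is unambiguous.
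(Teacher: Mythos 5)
Your proposal is correct and follows the same route as the paper: differentiate \eqref{eq_potential_expanded_form2} to obtain $\frac{\partial U(x)}{\partial x_i^k}=U(y_i^{k+1},x_{-i})-U(y_i^1,x_{-i})$, then use that $y_i^1$ is a best response (hence maximizes the pure-strategy payoffs, by linearity of $U$ in $\sigma_i$) to conclude both parts. The paper's proof is just a terser version of yours, leaving the best-response characterization and part (ii) implicit.
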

\begin{proof}
(i) Differentiating \eqref{eq_potential_expanded_form2} we find that
$\frac{\partial U(x)}{\partial x_i^{k}} = U(\actionikone,x_{-i}) - U(\actionione,x_{-i})$
%\begin{align} \label{eq_potential_derivative}
%\frac{\partial U(x)}{\partial x_i^{k}} = U(\actionikone,x_{-i}) - U(\actionione,x_{-i}).
%\end{align}
(i)  Since $\actionione$ is a best response, we must have $U(\actionione,x_{-i}) \geq U(\actionikone,x_{-i})$ for any $k=1,\ldots,K_i-1$. Hence $\frac{\partial U(x)}{\partial x_i^{k}}\leq 0$.\\
(ii) Follows readily from \eqref{eq_potential_expanded_form2}.
\end{proof}

%\begin{lemma} \label{lemma_eq_BR_positive_gradient}
%Let $x \in X$. If $\actionik \in BR_i(x_{-i})$ then $\frac{\partial U(x)}{\partial x_i^{k}} \geq 0$.
%\end{lemma}
%\begin{proof}
%The result follows readily from \eqref{eq_potential_derivative}.
%\end{proof}
%
\begin{lemma} \label{lemma_carrier_vs_gradient}
Suppose $x^*$ is an equilibrium and $y_i^k \in \carr(x^*)$, $k\geq 2$. Then $\frac{\partial U(x^*)}{\partial x_i^k} = 0$.
\end{lemma}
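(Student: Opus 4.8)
The plan is to deduce the claim from the elementary fact that, at a Nash equilibrium, every pure strategy in a player's carrier is a best reply and hence attains the common maximal expected payoff; differentiating the expanded form \eqref{eq_potential_expanded_form2} of $U$ then converts this equality of payoffs into the asserted vanishing of a partial derivative. In fact I would appeal directly to Lemma~\ref{lemma_apx_BR_vs_gradient}.

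First I would use the best-response equivalence \eqref{eq_BR_equivalence}: since $x^*$ is an equilibrium, $T_i(x_i^*)$ maximizes $\sigma_i\mapsto U(\sigma_i,x^*_{-i})$ over $\Delta_i$. By the multilinear expansion \eqref{eq_potential_expanded_form}, $U(\sigma_i,x^*_{-i})=\sum_{m=1}^{K_i}\sigma_i^m\,U(y_i^m,x^*_{-i})$ is linear in $\sigma_i$, so a maximizer over the simplex puts positive mass only on those $y_i^m$ achieving $\max_m U(y_i^m,x^*_{-i})$. Consequently every pure strategy in $\carr_i(x_i^*)$ --- in particular the given $y_i^k$, and, under the standing convention $y_i^1\in\carr_i(x_i^*)$ (the same ordering hypothesis as in Lemma~\ref{lemma_apx_BR_vs_gradient}), the reference action $y_i^1$ --- yields this maximal value.

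Next I would differentiate \eqref{eq_potential_expanded_form2} to get $\partial U(x)/\partial x_i^{m}=U(y_i^{m+1},x_{-i})-U(y_i^{1},x_{-i})$ for $m=1,\dots,K_i-1$, exactly as in part~(i) of Lemma~\ref{lemma_apx_BR_vs_gradient}. Evaluating at $x^*$ on the coordinate that the bijection $T_i$ assigns to the pure strategy $y_i^k$ --- i.e.\ $m=k-1$, which is why the hypothesis $k\ge 2$ is needed --- both $U(y_i^{k},x^*_{-i})$ and $U(y_i^{1},x^*_{-i})$ equal $\max_m U(y_i^m,x^*_{-i})$ by the previous step, so their difference is $0$. (Equivalently, $y_i^k\in\carr_i(x_i^*)$ forces $y_i^k\in\BR_i(x^*_{-i})$ at an equilibrium, and Lemma~\ref{lemma_apx_BR_vs_gradient}(ii) then delivers the vanishing partial derivative at once.)

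The argument is routine rather than difficult; the only points that need attention are the index shift built into $T_i$ (the $m$-th coordinate of $x_i$ corresponds to action $y_i^{m+1}$), so that ``$y_i^k$ lies in the carrier'' is matched with the correct coordinate of the gradient of $U$, and the standing ordering assumption $y_i^1\in\carr_i(x_i^*)$, without which the reference term $U(y_i^1,x^*_{-i})$ need not be maximal and one would only obtain a nonnegative --- not a vanishing --- partial derivative.
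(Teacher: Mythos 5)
Your proof is correct and follows essentially the same route as the paper's: multilinearity forces every action in the carrier to be a pure best response at an equilibrium, and Lemma~\ref{lemma_apx_BR_vs_gradient}(ii) (equivalently, differentiating \eqref{eq_potential_expanded_form2}) then gives the vanishing partial derivative. You are also right to flag the index shift --- the coordinate of $x_i$ associated with the action $y_i^k$ is $x_i^{k-1}$, which is the reading consistent with how the lemma is invoked elsewhere (e.g.\ in \eqref{IR6_eq1}), a detail the paper's one-line proof glosses over.
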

\begin{proof}
Since $U$ is multilinear, $y_i^k$ must be a pure-strategy best response to $x_{-i}^*$. The result then follows from Lemma~\ref{lemma_apx_BR_vs_gradient}.
\end{proof}

}

%\section{} \label{Appendix-general-extras}

%% If you have bibdatabase file and want bibtex to generate the
%% bibitems, please use
%%

%\section*{References}
\bibliographystyle{elsarticle-num}
\bibliography{myRefs}

%% else use the following coding to input the bibitems directly in the
%% TeX file.

%%\begin{thebibliography}{00}

%% \bibitem{label}
%% Text of bibliographic item

%\bibitem{}
%
%\end{thebibliography}
\end{document}